\documentclass{amsart}

\usepackage[dvipsnames,svgnames,x11names,hyperref]{xcolor}
\usepackage{url}
\usepackage[pagebackref,colorlinks,citecolor=Mahogany,linkcolor=Mahogany,urlcolor=Mahogany,filecolor=Mahogany]{hyperref}
\usepackage{lmodern}
\usepackage{microtype}
\usepackage{stmaryrd}
\usepackage{silence}
\usepackage{upgreek}
\usepackage{faktor}
\usepackage{verbatim}
\usepackage{amsmath} 
\usepackage{amssymb}
\usepackage{mathrsfs}
\usepackage{tikz}
\usepackage{tikz-cd}
\usepackage{adjustbox}
\usepackage{mathtools}
\usepackage[mathcal]{euscript}
\usepackage[T1]{fontenc}
\usepackage{enumerate}
\usepackage{indentfirst}
\usepackage{subcaption}

\setcounter{tocdepth}{1}

\pagestyle{plain}
\usepackage{ragged2e}

\newcommand{\Hom}{\mr{Hom}}

\newcommand{\Conf}{\mr{Conf}}

\newcommand{\hofibre}{\mr{hofibre}}

\newcommand{\Emb}{\mr{Emb}}
\newcommand{\Diff}{\mr{Diff}}

\newcommand{\Top}{\mr{Top}}
\newcommand{\TTop}{\mathcal{T}\mr{op}}

\newcommand{\id}{\mr{id}}

\newcommand{\D}{\mathcal{D}}

\newcommand{\A}{\mathcal{A}}
\newcommand{\B}{\mathcal{B}}

\newcommand{\C}{\mathcal{C}}

\newcommand{\Map}{\mr{Map}}

\DeclareMathOperator*{\holim}{\mr{holim}}
\DeclareMathOperator*{\hocolim}{\mr{hocolim}}

\DeclareMathOperator*{\colim}{\mr{colim}}

\newcommand{\Retr}{\mr{Retr}}
\newcommand{\Mic}{\mr{Mic}}
\newcommand{\MMic}{\mathcal{M}\mr{ic}}
\newcommand{\MMfld}{\mathcal{M}\mr{fld}}
\newcommand{\DDisk}{\mathcal{D}\mr{isk}}
\newcommand{\PSh}{\mathcal{P}\mr{sh}}

\newcommand{\MMet}{\mathcal{M}\mr{et}}

\newcommand{\VVec}{\mathcal{V}\mr{ec}}

\newcommand{\tp}{t}
\newcommand{\s}{s}
\newcommand{\f}{f}
\renewcommand{\r}{r}

\theoremstyle{definition}\newtheorem{definition}{Definition}[section]
\theoremstyle{remark}\newtheorem{remark}[definition]{Remark}
\theoremstyle{definition}\newtheorem{notation}[definition]{Notation}
\theoremstyle{definition}
\theoremstyle{remark}\newtheorem{example}[definition]{Example}
\theoremstyle{plain}\newtheorem{proposition}[definition]{Proposition}
\theoremstyle{plain}\newtheorem{lemma}[definition]{Lemma}
\theoremstyle{definition}
\theoremstyle{remark}\newtheorem{question}[definition]{Question}
\theoremstyle{plain}\newtheorem{corollary}[definition]{Corollary}
\theoremstyle{plain}\newtheorem{theorem}[definition]{Theorem}
\theoremstyle{definition}\newtheorem{construction}[definition]{Construction}
\theoremstyle{remark}
\theoremstyle{plain}
\theoremstyle{plain}
\newtheorem{atheorem}{Theorem}

\newcommand{\lra}{\longrightarrow}

\newcommand{\mr}[1]{{\rm #1}}

\makeatletter
\@addtoreset{definition}{section}
\makeatother

\AtBeginDocument{%
	\def\MR#1{}
}

\title{Embedding calculus and smooth structures}
\author{Ben Knudsen}
\address{Department of Mathematics, Northeastern University, Boston, MA 02115, USA}
\email{b.knudsen@northeastern.edu}
\author{Alexander Kupers}
\address{University of Toronto,	Department of Mathematics, Toronto, ON, M1C 1A4 Canada}
\email{a.kupers@utoronto.ca}
\date{} 

\begin{document}

\begin{abstract} We study the dependence of the embedding calculus Taylor tower on the smooth structures of the source and target. We prove that embedding calculus does not distinguish exotic smooth structures in dimension 4, implying a negative answer to a question of Viro. In contrast, we show that embedding calculus does distinguish certain exotic spheres in higher dimensions. As a technical tool of independent interest, we prove an isotopy extension theorem for the limit of the embedding calculus tower, which we use to investigate several further examples.
\end{abstract}

\maketitle

\section{Introduction}

This paper is an investigation into the scope of a certain tool used to study the space $\Emb^\s(N,M)$ of smooth embeddings from an $n$-manifold $N$ into an $m$-manifold $M$. This investigation has consequences for spaces of embeddings themselves, as shown by the following result on knots and links, which answers a question of Viro \cite[\S 6]{Viro:SS1} in the negative and improves on a result of Arone and Szymik \cite{AroneSzymik:SKCESS}.

\begin{atheorem}\label{athm:knots}
Let $M$ and $M'$ be smooth, simply connected, compact $4$-manifolds. If $M$ and $M'$ are homeomorphic, then for any $k \geq 0$ we have \[\Emb^\s(\sqcup_k S^1,M)\simeq \Emb^\s(\sqcup_k S^1,M').\]
\end{atheorem}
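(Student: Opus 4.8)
The plan is to deduce Theorem~\ref{athm:knots} from two facts: convergence of the embedding calculus Taylor tower in codimension at least three, and a structural analysis showing that, for simply connected $4$-manifolds, this tower depends only on data preserved by a homeomorphism. For the first, since $\sqcup_k S^1$ is $1$-dimensional and $M$ is $4$-dimensional, embeddings have codimension $3$, so the Goodwillie--Klein--Weiss convergence theorem makes $\Emb^\s(\sqcup_k S^1, M) \lra T_\infty\Emb^\s(\sqcup_k S^1, M)$ a weak equivalence, and similarly for $M'$; it therefore suffices to produce a weak equivalence of towers $T_\infty\Emb^\s(\sqcup_k S^1, M) \simeq T_\infty\Emb^\s(\sqcup_k S^1, M')$.

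For the second, recall that $T_\infty\Emb^\s(N, M)$ is computed as a mapping space of presheaves on the category of finite disjoint unions of disks $\mathbb{R}^n$, $n = \dim N$, and is therefore determined by the presheaf $\sqcup_j\mathbb{R}^n \mapsto \Emb^\s(\sqcup_j\mathbb{R}^n, M)$. Each of these spaces is naturally equivalent to the total space of the bundle over the ordered configuration space $\Conf_j(M)$ whose fiber over a configuration is the product, over its points $x_i$, of the spaces of injective linear maps $\mathbb{R}^n \to T_{x_i}M$; hence the tower depends on $M$ only through the configuration spaces $\Conf_j(M)$, their maps to powers of $M$, and the tangent bundle $TM$. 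Configuration spaces are functorial in homeomorphisms, so a homeomorphism $\phi\colon M \cong M'$ induces compatible equivalences $\Conf_j(M)\simeq\Conf_j(M')$, and it remains to see that $\phi^*TM'\cong TM$ as vector bundles over $M$. This is where the hypotheses enter: an oriented rank-$4$ bundle over a $4$-complex $X$ with $H^1(X;\mathbb{Z}/2)=0$ is determined by $w_2$, $e$, and $p_1$; a simply connected closed $4$-manifold $M$ has $H^1(M;\mathbb{Z}/2)=0$; and $w_2(M)$, $e(TM)$ (which evaluates to $\chi(M)$), and $p_1(TM)$ (which evaluates to $3\sigma(M)$) are homotopy invariants, so they pull back correctly along $\phi$. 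Therefore $\phi^*TM'\cong TM$ compatibly with the configuration-space data, which produces the desired equivalence of towers and, with the first step, proves the theorem.

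The main obstacle is the structural input of the second step: establishing the equivalence between $\Emb^\s(\sqcup_j\mathbb{R}^n, M)$ and the bundle over $\Conf_j(M)$ at the level of presheaves -- so that it is compatible with insertion and deletion of points and with the $\mathrm{GL}_n$-action coming from rescaling disks -- and then choosing the isomorphism $\phi^*TM'\cong TM$ coherently with all of this. The four-dimensional ingredient proper, that homeomorphic simply connected $4$-manifolds have isomorphic tangent bundles, is cohomologically transparent; the real work is the homotopy-coherent bookkeeping that turns these classwise facts into an honest equivalence of Taylor towers, and I expect this to be the technical heart of the argument.
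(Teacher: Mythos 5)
Your first step is correct and is exactly what the paper does: since $\sqcup_k S^1$ has dimension $1$ and $M,M'$ have dimension $4$, Theorem \ref{thm:gkw} applies in codimension $3$ and reduces the problem to comparing the towers $T_\infty\Emb^\s(\sqcup_k S^1,-)$. Your description of $\Emb^\s(\sqcup_j\mathbb{R}^1,M)$ as a bundle of injective linear maps over $\Conf_j(M)$ is also right; it is Proposition \ref{prop:smooth pullback}.

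The gap is in what the ``formal data'' must include. You claim the tower depends on $M$ only through $\Conf_\bullet(M)$, the maps to powers of $M$, and $TM$ as an abstract vector bundle, and then check that a homeomorphism $\phi\colon M\to M'$ preserves this via Dold--Whitney and the homotopy invariance of $w_2$, $e$, $p_1$. But $\mathbb{E}^\s_M$ in fact depends on $TM$ as a \emph{vector bundle refinement of the topological tangent microbundle $T^\tp M$}, together with the comparison homotopy; this is what the category $\MMfld^\f$ of formally smooth manifolds and Theorem \ref{thm:independence} encode. What must be produced is therefore not an abstract isomorphism $TM\cong\phi^*TM'$, but an isomorphism of lifts of $T^\tp M\colon M\to B\mr{Top}(4)$ along $BO(4)\to B\mr{Top}(4)$, compatible with $T^\tp\phi$ up to a specified homotopy of microbundle maps. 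The obstruction to this is the relative Kirby--Siebenmann invariant $\mr{ks}(\phi)\in H^3(M;\mathbb{Z}/2)$ (Proposition \ref{prop:ks invariant}), not the matching of characteristic classes: your Dold--Whitney argument shows the two lifts are abstractly homotopic as maps to $BO(4)$, a necessary but not equivalent condition, since two lifts of the same map to $B\mr{Top}(4)$ can differ as lifts while agreeing up to free homotopy. The conclusion is nonetheless correct here, but because $H^3(M;\mathbb{Z}/2)=0$ by Poincar\'e duality and simple connectivity (Corollary \ref{cor:4d formal diff}), which is the actual $4$-dimensional input. Finally, the ``homotopy-coherent bookkeeping'' you defer is where much of the paper's work lives: Section \ref{section:calc and formal smooth} and Appendix \ref{section:construction} build $\MMfld^\r$ and $\MMfld^\f$ as homotopy pullbacks of simplicial categories and prove the weak equivalence $\Emb^\r(\sqcup_j\mathbb{R}^n,M)\to\Emb^\f(\sqcup_j\mathbb{R}^n,M)$ (Proposition \ref{prop:euclidean embeddings}), which is what upgrades the pointwise identification to a weak equivalence of operadic right modules.
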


The tool in question is the embedding calculus of Goodwillie--Weiss \cite{Weiss:EPVITI,Weiss:EPVITIerratum,GoodwillieWeiss:EPVITII}, which, at the coarsest level, provides a functorial comparison map \[\Emb^\s(N,M)\lra T_\infty \Emb^\s(N,M) = \underset{k}{\mr{holim}}\, T_k\Emb^\s(N,M),\] whose target is assembled from the configuration spaces of $N$ and $M$ and maps among them (details are reviewed in Section \ref{section:embedding calculus}). According to one of the main results of the subject \cite{GoodwillieWeiss:EPVITII,GoodwillieKlein:MDSE}, this map is a weak equivalence in codimension at least 3; one says that \emph{the Taylor tower converges} to the embedding space. In particular, the theorem applies to links in 4-manifolds as in Theorem \ref{athm:knots}.

Little is known about convergence in low codimension. We begin to address this gap by proving that codimension $0$ convergence largely fails in dimension $4$.

\begin{atheorem}\label{athm:4d}
Let $M$ and $N$ be smooth, simply connected, compact $4$-manifolds. If $M$ and $N$ are homeomorphic, then $T_\infty\Emb^\s(N,M)\neq\varnothing$. In particular, if $M$ and $N$ are not also diffeomorphic, then the map \[\Emb^\s(N,M)\lra T_\infty\Emb^\s(N,M)\] is not a weak equivalence.
\end{atheorem}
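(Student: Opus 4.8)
The plan is to reduce the statement to the existence of a point in $T_\infty\Emb^\s(N,M)$, since the second claim follows formally: if the comparison map were a weak equivalence, then $\Emb^\s(N,M)$ would be nonempty, forcing a diffeomorphism $N\cong M$ by Cerf-style arguments (compact simply connected $4$-manifolds admitting an embedding in both directions, hence in codimension $0$, are diffeomorphic—one gets an $h$-cobordism trick or simply that a codimension-$0$ embedding of a closed manifold into a connected manifold of the same dimension is an open-and-closed inclusion, so a diffeomorphism onto a component). So the entire content is the first assertion, $T_\infty\Emb^\s(N,M)\neq\varnothing$.

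To produce a point of $T_\infty\Emb^\s(N,M)=\lim_k T_k\Emb^\s(N,M)$, the strategy is to exploit the fact that embedding calculus sees only the configuration spaces of $N$ and $M$ with their diagonal stratifications, together with the local structure of tangential data, and that all of this is controlled by the \emph{underlying topological manifold} rather than the smooth structure. Concretely, I would first recall that $T_k\Emb^\s(N,M)$ is the space of derived natural transformations (maps of presheaves on the Weiss $k$-disk category) from the presheaf represented by $N$ to that represented by $M$, i.e.\ $\Nat_{\Disk_{k}}(\Emb^\s(-,N),\Emb^\s(-,M))$ over disjoint unions of at most $k$ copies of $\R^n$. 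The key input is that a homeomorphism $\phi\colon M'\to M$ of smooth $4$-manifolds—while not smooth—induces an equivalence on these towers. This is where the hard work lies.

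The main obstacle, and the technical heart of the argument, is that a homeomorphism does not act on smooth embedding spaces, so one cannot directly transport a point. The plan to circumvent this is to use smoothing theory in the form appropriate to embedding calculus: the presheaf $U\mapsto\Emb^\s(U,M)$ on finite disjoint unions of Euclidean spaces depends, up to equivalence, only on the topological manifold underlying $M$ together with a choice of smooth structure, but the \emph{difference} between two smooth structures on the same topological $4$-manifold is measured by a section of a bundle whose fiber is (a delooping of) $\Top(4)/O(4)$, and—crucially in dimension $4$—the relevant obstruction for the purposes of embedding calculus vanishes because embedding calculus only probes Euclidean source pieces, on which smoothing theory is controlled by $\mathrm{Top}(4)/O(4)$ but the needed comparison factors through the configuration-space data where the smooth structure enters only through its underlying topological germ. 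I would make this precise by showing that the map of presheaves $\Emb^\s(-,M')\to\Emb^{\top}(-,M')\xleftarrow{\ \phi\ }\Emb^{\top}(-,M)\leftarrow\Emb^\s(-,M)$ becomes, after applying $T_k$, a zig-zag of equivalences: the comparison $T_k\Emb^\s\to T_k\Emb^{\top}$ is an equivalence because in codimension $0$ with Euclidean sources the smoothing-theory fiber $\Top(n)/O(n)$ is, after the relevant Taylor approximation, already accounted for by the tangential structure that both towers carry identically.

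Finally, with the tower-level equivalence $T_\infty\Emb^\s(N,M')\simeq T_\infty\Emb^\s(N,M)$ in hand (applied with $M'$ a smooth manifold homeomorphic to $M$—take $M'=N$, using that $N$ and $M$ are homeomorphic), I would conclude by noting that $T_\infty\Emb^\s(N,N)\neq\varnothing$ since it receives the map from $\Emb^\s(N,N)\ni\id_N$. Transporting $\id_N$ across the chain of equivalences yields the desired point of $T_\infty\Emb^\s(N,M)$. I expect the delicate point to be verifying that the smoothing-theoretic comparison is genuinely an equivalence \emph{on the level of the embedding calculus tower} and not merely on some associated graded or on homotopy groups in a range; handling this cleanly will likely require the isotopy extension theorem for $T_\infty$ advertised in the abstract, to promote local (Euclidean, hence codimension-$0$-in-$\R^n$) smoothing statements to the global statement about $N$ and $M$.
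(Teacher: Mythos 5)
Your reduction to the claim that $T_\infty\Emb^\s(N,M)\neq\varnothing$ is correct, and the high-level idea of transporting the identity across some weaker-than-smooth invariance of the tower is the right one. However, the specific invariance you assert is too strong, and the gap is visible in the fact that you never use the simply connected hypothesis. You claim, in effect, that $T_k\Emb^\s(-,M)$ depends only on the underlying \emph{topological} manifold of $M$ (via an equivalence $T_k\Emb^\s\to T_k\Emb^{\top}$). This is false. What the paper proves (Theorem \ref{thm:independence}) is that the tower depends only on $M$ as a \emph{formally smooth} manifold, i.e.\ the underlying topological manifold together with a vector bundle refinement of its tangent microbundle. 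The forgetful map from formally smooth to topological manifolds has a nontrivial fiber, and a homeomorphism $\varphi\colon N\to M$ of smooth manifolds does \emph{not} automatically lift to an isomorphism of formally smooth manifolds: the obstruction is the relative Kirby--Siebenmann invariant $\mr{ks}(\varphi)\in H^3(N;\mathbb{Z}/2)$ (Proposition \ref{prop:ks invariant}). The role of the simply connected hypothesis is exactly to kill this group via Poincar\'e duality (Corollary \ref{cor:4d formal diff}); without it, your argument would fail, and indeed the theorem would be false. Your appeal to ``embedding calculus only probes Euclidean source pieces'' to make the smoothing-theory obstruction vanish is not a valid substitute for this cohomological vanishing.

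A second, more minor point: the isotopy extension theorem for $T_\infty$ is not needed here. Once you know the tower depends only on the formal smooth structure and that $N$ and $M$ become isomorphic in $\MMfld^\f$, the conclusion is immediate: $T_\infty\Emb^\s(N,M)\simeq\Map^h_{\mathbb{E}_n^\f}(\mathbb{E}_N^\f,\mathbb{E}_M^\f)\simeq\Map^h_{\mathbb{E}_n^\f}(\mathbb{E}_N^\f,\mathbb{E}_N^\f)$, which contains the identity. No local-to-global promotion is required for this particular statement. Isotopy extension for $T_\infty$ is a tool the paper develops for the later applications, not for Theorem \ref{athm:4d}.
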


In fact, we prove that there are homotopy invertible elements in $T_\infty\Emb^\s(N,M)$, which one should think of as saying that $N$ and $M$ are diffeomorphic (or at least isotopy equivalent) in the eyes of embedding calculus.

Theorems \ref{athm:knots} and \ref{athm:4d} arise from a common source. Specifically, the data involved in the constructions of embedding calculus is a pair of presheaves, one for $N$ and one for $M$. We show in Theorem \ref{thm:independence} that these presheaves are largely insensitive to smooth structure in dimension 4, and the results follow---see Section \ref{section:dim 4}.

The results above might lead one to suspect that embedding calculus is insensitive to smooth structure. The following contrasting result shows that the situation is not so simple (see Section \ref{sec:examples exotic} for further examples).

\begin{atheorem}\label{athm:spheres}
For any $n=2^j$ with $j \geq 3$, there is an exotic $n$-sphere $\Sigma$ such that $T_\infty\Emb^\s(\Sigma, S^n)=\varnothing$. In particular, the map \[\Emb^\s(\Sigma,S^n)\lra T_\infty\Emb^\s(\Sigma,S^n)\] is a weak equivalence (both sides are empty).
\end{atheorem}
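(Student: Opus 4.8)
The plan is to leverage the known relationship between the embedding calculus tower for embeddings of a closed $n$-manifold into another and the tangential data of those manifolds, together with a nonvanishing characteristic-class obstruction coming from the algebraic $K$-theory / surgery-theoretic properties of the exotic spheres in question. The key point is that $T_\infty\Emb^\s(\Sigma,S^n)$ is assembled from the configuration spaces of $\Sigma$ and $S^n$ together with their $\mathrm{Diff}$-structures, and the lowest obstruction to the existence of a point---i.e.\ to the nonemptiness of $T_1\Emb^\s(\Sigma,S^n)$---is the existence of a bundle map $T\Sigma\to TS^n$ covering a map $\Sigma\to S^n$ of degree $\pm 1$, which amounts to the triviality of the stable tangent bundle of $\Sigma$ read off through the appropriate $J$-homomorphism. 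But $\Sigma$ is parallelizable (all homotopy spheres are stably parallelizable), so $T_1$ is nonempty; hence the obstruction must live at a higher stage of the tower. So the real content is that some \emph{higher} Taylor stage $T_k\Emb^\s(\Sigma,S^n)$ is empty for the right choice of $\Sigma$.

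First I would identify, for the specific family $n=2^j$, $j\geq 3$, an exotic sphere $\Sigma$ detected by embedding calculus. The natural candidates are the Kervaire spheres or, more promisingly here, spheres arising from the image of the $J$-homomorphism or from Milnor's $\lambda$-invariant / the Eells--Kuiper invariant---the dimensions $2^j$ are exactly where one expects subtle $2$-primary phenomena in $\Theta_n$ and in the homotopy groups of $\mathrm{Top}/\mathrm{O}$ and $\mathrm{BTop}$. I would then use the fact, established in the embedding-calculus literature (Boavida de Brito--Weiss, and the Weiss fiber sequence) and recalled in the body of this paper, that $T_\infty\Emb^\s(\Sigma,S^n)$ receives a map from the space of ``formal'' or ``tangential'' embeddings, and more precisely that a point of $T_\infty$ produces a map of the associated right modules over the little-$n$-disks operad (or over $E_n$-framings). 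The existence of such a module map forces a compatibility between the $\mathrm{Top}(n)$- or $\mathrm{O}(n)$-structures on $\Sigma$ and $S^n$ that, in the chosen dimensions, is obstructed by a nonzero class---the plan is to pull this back from the failure of $\Sigma$ to be ``$\mathrm{Top}$-trivial relative to $S^n$'' as measured by an exotic smoothing invariant living in a homotopy group of $\mathrm{Top}(n)/\mathrm{O}(n)$ that is nonzero precisely when $n=2^j$.

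The key steps, in order: (1) For $n=2^j$, $j\geq 3$, pick $\Sigma\in\Theta_n$ so that a chosen characteristic class $c(\Sigma)\in\pi_{n}(\mathrm{Top}(n)/\mathrm{O}(n))$ (or in a related surgery group) is nonzero; verify via Kervaire--Milnor and known computations that such $\Sigma$ exists in these dimensions. (2) Show that a point in $T_k\Emb^\s(\Sigma,S^n)$ for $k$ large enough induces a map of right modules sufficiently refined to detect $c(\Sigma)$---this uses the structural results of the paper on what the embedding-calculus modules encode, together with the Weiss fiber sequence relating $\Emb$, immersions, and $\mathrm{Disk}$-structures. (3) Deduce that $c(\Sigma)=0$, a contradiction, so $T_k\Emb^\s(\Sigma,S^n)=\varnothing$; since the maps in the tower are then all maps out of the empty set and $T_\infty$ is a limit, $T_\infty\Emb^\s(\Sigma,S^n)=\varnothing$ as well. (4) Conclude that $\Emb^\s(\Sigma,S^n)\to T_\infty\Emb^\s(\Sigma,S^n)$ is a weak equivalence because the source is also empty: an honest diffeomorphism $\Sigma\to S^n$ would contradict $\Sigma$ being exotic, and more generally any embedding of a closed $n$-manifold into $S^n$ is a diffeomorphism onto its image, forcing $\Sigma\cong S^n$.

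The main obstacle I expect is step (2): translating the nonvanishing of an exotic-smoothing characteristic class into an obstruction \emph{visible to a finite stage of the embedding calculus tower}, rather than merely to the manifold-level data. Embedding calculus sees only the ``local-to-global'' approximation built from configuration spaces, and a priori such invariants could be washed out---indeed Theorems A and B show this happens in dimension $4$. The resolution must be that in high dimensions, and specifically for these $\Sigma$, the relevant invariant is \emph{of finite Weiss degree}: it is detected already by configurations of boundedly many points because it factors through the $E_n$-algebra structure on configuration spaces (equivalently through the $\mathrm{BTop}(n)$ versus $\mathrm{BO}(n)$ comparison at finite stages). Making this precise---pinning down the exact stage $k$ and showing the invariant does not vanish there---is the crux, and it is where the dimension hypothesis $n=2^j$ is genuinely used, via the classical fact that these are the dimensions in which the corresponding class in the homotopy of the relevant fiber is nonzero.
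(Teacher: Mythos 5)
The proposal takes a fundamentally different route from the paper, and the route has a genuine gap that you yourself flag but do not close. The paper's argument is short and compositional: it first establishes (Theorem \ref{thm:mahowald-examples}, using Hsiang--Levine--Szczarba and Mahowald on normal bundles of embedded homotopy spheres and Whitehead products) that for $n=2^j$, $j\geq 3$, there is an exotic $\Sigma$ that does \emph{not} embed in $\mathbb{R}^{n+3}$. It then invokes the functoriality and composition structure of $T_\infty$ (Proposition \ref{prop:convergence}): were $T_\infty\Emb^\s(\Sigma,S^n)$ nonempty, composing with the nonempty $T_\infty\Emb^\s(S^n,\mathbb{R}^{n+3})$ would produce a point of $T_\infty\Emb^\s(\Sigma,\mathbb{R}^{n+3})$, which by codimension-$3$ convergence (Theorem \ref{thm:gkw}) equals $\Emb^\s(\Sigma,\mathbb{R}^{n+3})=\varnothing$, a contradiction. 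The obstruction is therefore not exhibited as a characteristic class at a finite Weiss stage; it is seen indirectly by ``embedding into codimension $3$ and converging.''

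Your plan instead tries to locate a class $c(\Sigma)\in\pi_n(\mathrm{Top}(n)/\mathrm{O}(n))$ and show that a point of $T_k\Emb^\s(\Sigma,S^n)$, for $k$ large, forces $c(\Sigma)=0$. Step (2) of your outline is exactly where the proof must happen, and you explicitly leave it open (``Making this precise\dots is the crux''). The worry is not cosmetic: Theorem \ref{thm:independence} of this very paper shows that $T_\infty\Emb^\s$ sees only formal smooth structure, and it is not at all clear that the invariant you name is ``of finite Weiss degree'' in the sense you need. Indeed, the paper's Proposition \ref{prop:tinfty disks} relates the question to the unknown basepoint in $\pi_n(\mathrm{Aut}^h(\mathbb{E}_n))$, and the authors carefully avoid claiming to compute it. So, as written, your argument does not establish the theorem; you would need to supply the missing comparison between the smoothing-theoretic class and the module maps detected by the tower, and the paper's route shows that this can be sidestepped entirely by using composition, the non-embeddability result, and Goodwillie--Klein--Weiss convergence.
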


Thus, embedding calculus distinguishes certain exotic spheres. Alternatively, one can interpret this as a convergence result in codimension $0$. The crucial property distinguishing the exotic spheres of Theorem \ref{athm:spheres} from $S^n$ is that they do not embed in $\mathbb{R}^{n+3}$.

To facilitate the further study of embedding calculus in the potential absence of convergence, we prove an isotopy extension theorem for $T_\infty\Emb^\s(-,-)$ (see Theorem \ref{thm:isotopy ext}). We close by demonstrating its utility with several applications.

\subsection*{Acknowledgments} The first author thanks John Francis for bringing his attention to the question of whether embedding calculus distinguishes exotic spheres. The second author would like to thank Oscar Randal-Williams for helpful conversations, and in particular the suggestion that some exotic spheres do not admit embeddings into Euclidean space with low codimension. We would also like to thank the anonymous referees, as well as Manuel Krannich and Oscar Randal--Williams for corrections to an earlier version. The first author was supported by NSF grant DMS-1906174, the Natural Sciences and Engineering Research Council of Canada (NSERC) [funding reference number 512156 and 512250], as well as the Research Competitiveness Fund of the University of Toronto at Scarborough. The second author was supported by NSF grant DMS-1803766.

\tableofcontents

\section{Preliminaries}\label{section:embedding calculus}

In this section, we gather what facts we need from the theory of embedding calculus, as well as some standard foundational material on topological manifolds. In our discussion of calculus, we adopt the perspective of \cite{BoavidaWeiss:MCHS}, but see \cite{Weiss:EPVITI,GoodwillieKleinWeiss:HSTDEC,Turchin:CFMCFMO,BoavidaWeiss:SSMCC} for other foundations.

\subsection{Embedding calculus}\label{sec:emb calc defs} Write $\MMfld^\s$ for the simplicial category whose objects are smooth manifolds without boundary, of finite type and arbitrary dimension. The morphism space $\Map_{\MMfld^s}(N,M)$ has as $n$-simplices commuting diagrams
\[\begin{tikzcd} \Delta^n \times N \arrow{rr} \arrow{rd} & & \Delta^n \times M \arrow{ld} \\
 & \Delta^n & \end{tikzcd}\]
in which the top map is a neat smooth embedding of manifolds with corners. This category is symmetric monoidal under disjoint union. 

\emph{Manifold calculus} approximates simplicial presheaves on this category by extrapolating from their values on disjoint unions of disks of a fixed dimension. More formally, let $\DDisk^\s_n \subset \MMfld^\s$ be the full subcategory on those objects that are diffeomorphic to a disjoint union of finitely many copies $\mathbb{R}^n$ with its standard smooth structure. Manifold calculus is the approximation of simplicial presheaves on $\MMfld^\s$ by simplicial presheaves on $\DDisk^\s_n$. \emph{Embedding calculus} is the application of manifold calculus to the presheaf of embeddings into a smooth manifold $M$. Fixing $n$, we write $\mathbb{E}_M^\s$ for the presheaf on $\DDisk^\s_n$ obtained by restriction of the representable presheaf on $\MMfld^\s$ determined by $M$; explicitly, we have \[\mathbb{E}_M^\s(\sqcup_k \mathbb{R}^n) \coloneqq \Emb^\s(\sqcup_k \mathbb{R}^n,M).\]
The reader is warned that our notation does not reflect the choice of $n$, which should always be clear from context.

\begin{remark}Equivalently, writing $\mathbb{E}_n^\s$ for the endomorphism operad of $\mathbb{R}^n$ with its standard smooth structure---equivalent to the framed little $n$-disks operad allowing translation, scaling, rotation, and reflection of the little disks---the simplicial category $\PSh(\DDisk^\s_n)$ of simplicial presheaves on $\DDisk_n^\s$ is equivalent to the simplicial category of right $\mathbb{E}_n^\s$-modules \cite[\S6]{BoavidaWeiss:MCHS}.
\end{remark}


An embedding $N \hookrightarrow M$ determines a map $\mathbb{E}_N^\s \to \mathbb{E}_M^\s$ of presheaves. Since the category $\DDisk^\s_n$ has a filtration by cardinality of path components, there results a canonical functorial cofiltration on mapping spaces between presheaves and localising at the objectwise weak equivalences also on the derived mapping spaces. In the situation at hand, this cofiltration is called the \emph{embedding calculus Taylor tower}.

\begin{definition}[Boavida--Weiss]\label{def:limit of tower}
Let $N$ and $M$ be smooth manifolds of dimension $n$ and $m$, respectively. The \emph{embedding calculus Taylor tower} for smooth embeddings of $N$ into $M$ is the cofiltered derived mapping space of presheaves on truncations of the simplicial category $\DDisk^\s_n$ \[T_\bullet\Emb^\s(N,M)\coloneqq \Map^h_{\PSh(\DDisk^\s_n)}(\mathbb{E}_N^\s, \mathbb{E}_M^\s).\]
\end{definition}

The cofiltered derived mapping space gives rise to a tower of comparison maps
\[\begin{tikzcd}
& & \cdots\dar\\[-5pt]
&&T_{k+1}\Emb^\s(N,M)\dar \\
\Emb^\s(N,M)\arrow{urr}{\eta_{k+1}}\arrow{rr}{\eta_k}&& T_k\Emb^\s(N,M)\dar \\[-5pt]
&&\cdots\end{tikzcd} \]
We write $T_\infty\Emb^\s(N,M)$ for the homotopy limit of the tower, which is to say the derived mapping space of presheaves on the untruncated simplicial category $\DDisk^\s_n$. One can choose a model for the derived mapping space such that (i) these constructions are functorial in $M,N \in \MMfld^\s$, (ii) there are associative and unital composition maps, (iii) both functorality and composition are compatible with the above comparison maps. See \cite[Section 3.3.1]{KupersRandalWilliams:CTSA} for further discussion of this point.

The following is \cite[Thm.~2.3]{GoodwillieWeiss:EPVITII}, relying on excision estimates from Goodwillie--Klein \cite{GoodwillieKlein:MDSE}.

\begin{theorem}[Goodwillie--Klein--Weiss]\label{thm:gkw}
The map $\eta_k$ is $(3-m + (k+1)(m-n-2))$-connected for $k>0$. In particular, if $m-n\geq3$, then $\eta_\infty$ is a weak equivalence.
\end{theorem}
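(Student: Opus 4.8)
The plan is to deduce the estimate from the general convergence theorem of Goodwillie--Weiss, fed by the multiple disjunction theorem of Goodwillie--Klein. First I would use the comparison of \cite[\S 6]{BoavidaWeiss:MCHS} to identify the operadic tower of Definition~\ref{def:limit of tower} with Weiss's original manifold-calculus tower: under this identification $T_k\Emb^\s(N,M)$ becomes the $k$-th polynomial approximation $T_k E(N)$ of the simplicial presheaf $E=\Emb^\s(-,M)$ on the poset $\mcO(N)$ of open subsets of $N$, and $\eta_k$ becomes the canonical comparison $E(N)\to T_k E(N)$. The structure theory of the Taylor tower then expresses the homotopy fibre of $T_k E(N)\to T_{k-1}E(N)$ as a space of sections over $\Conf_k(N)$ of a bundle whose fibre is the $k$-th cross-effect of $E$, i.e.\ the total homotopy fibre of the $k$-cube $S\mapsto E(\sqcup_{i\in S}\mathbb{R}^n)$; hence the connectivity of $\eta_k$ is governed entirely by a uniform analyticity estimate for $E$, and the goal becomes to establish such an estimate with the correct slope.

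The geometric heart is the higher excision, or \emph{multiple disjunction}, theorem for spaces of embeddings (Goodwillie--Klein \cite{GoodwillieKlein:MDSE}): for pairwise disjoint compact submanifolds-with-corners $A_0,\dots,A_k\subseteq N$ of dimension at most $n$, meeting $\partial N$ neatly, the $(k+1)$-cube
\[ S\longmapsto \Emb^\s\!\left(N\setminus\bigcup_{i\in S}A_i,\; M\right) \]
of restriction maps is $\rho_k$-Cartesian, where $\rho_k$ grows linearly in $k$ with slope $m-n-2$ (the codimension minus two), the precise value arising from summing the individual contributions $m-\dim A_i-2\ge m-n-2$ together with an ambient correction. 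I would import this rather than reprove it: its proof rests on Morlet-style disjunction, the surgery-theoretic analysis of embedding spaces, and a delicate multi-parameter induction, and is a major undertaking in its own right --- this is precisely the ``excision estimates from Goodwillie--Klein'' on which the theorem relies.

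Granting multiple disjunction, the remaining argument is formal. Choosing a handle decomposition of $N$ and inducting on the number of handles (equivalently, on the size of a good cover by disk-like opens), I would prove that $E(A)\to T_k E(A)$ is $(3-m+(k+1)(m-n-2))$-connected for every open $A\subseteq N$: the base case of unions of few disks is handled directly, since there $E$ already behaves polynomially in the relevant range; the inductive step writes $A$ as a strongly cocartesian (Mayer--Vietoris) union, applies the inductive hypothesis to the smaller pieces, and controls the total homotopy fibre of the resulting cube using the $\rho_k$-Cartesianness supplied by disjunction together with a Blakers--Massey-type estimate for cubes. This is exactly the mechanism by which a $\rho$-analytic functor is shown to have a convergent Taylor tower with explicit connectivity bounds in \cite[\S 2]{GoodwillieWeiss:EPVITII}, and carefully tracking the additive and multiplicative constants through the induction yields the stated bound. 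The final assertion is then immediate: if $m-n\ge 3$ then $m-n-2\ge 1$, so the connectivity of $\eta_k$ tends to infinity with $k$; since $T_\infty\Emb^\s(N,M)=\holim_k T_k\Emb^\s(N,M)$, the comparison $\eta_\infty$ is $\infty$-connected, hence a weak equivalence.

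The main obstacle is the multiple disjunction theorem itself: everything else is homotopy-theoretic bookkeeping, but the linear-in-$k$ gain of $m-n-2$ per stage is a genuinely deep geometric phenomenon, reflecting the ability to make families of submanifolds disjoint by controlled isotopies and to iterate that control. A secondary point demanding care is the manifolds-with-corners and neat-embeddings framework built into $\MMfld^\s$, and the verification that the Boavida--Weiss operadic model really does compute Weiss's tower, so that the Goodwillie--Klein estimates may legitimately be transported into the present setup; here I would rely on the cited comparison rather than redo it.
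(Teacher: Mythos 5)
The paper does not prove this theorem; it cites it directly as \cite[Thm.~2.3]{GoodwillieWeiss:EPVITII} (relying on \cite{GoodwillieKlein:MDSE}), together with \cite[\S6]{BoavidaWeiss:MCHS} to transport the statement to the operadic model of Definition~\ref{def:limit of tower}. Your sketch is a faithful high-level account of what that cited proof actually does --- reduction via the Boavida--Weiss comparison, the Goodwillie--Klein multiple disjunction estimate as the geometric input, and the handle induction with Blakers--Massey bookkeeping that converts analyticity into the explicit connectivity bound --- and you correctly flag that the disjunction theorem is the deep, non-formal ingredient to be imported rather than reproved. So your approach agrees with the paper's: both defer to the same references, you merely unpack them. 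One small caution if you were to flesh this out: the connectivity of the Goodwillie--Klein $(k+1)$-cube is not simply ``linear with slope $m-n-2$''; the precise estimate is of the form $3-m+\sum_i(m-q_i-2)$ where $q_i$ is the handle dimension of $A_i$, and it is this summed form, specialized to $q_i\le n$, that produces the $(3-m+(k+1)(m-n-2))$-connectivity in the statement (and the refinement in terms of $\mathrm{hdim}(N)$ that the paper mentions afterwards). Tracking that dependence carefully is exactly the ``bookkeeping'' you allude to, and it is where the slope and the additive constant in the theorem come from.
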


In fact, we may replace $n$ in the above result by the handle dimension $\mr{hdim}(N)$ of $N$. Recall that $\mr{hdim}(N)\leq h$ if $N$ is the interior of a manifold which admits a handle decomposition with handles of index $\leq h$ only. For example, $\mr{hdim}(\mathbb{R}^n) = 0$.

If $M = N$, we write $T_\bullet \Diff(M) \subseteq T_\bullet \Emb^\s(M,M)$ for the simplicial subset of homotopy invertible maps. This distinction may very well be unnecessary; however, even in cases where every self-embedding of $M$ is a diffeomorphism, we do not know whether every path component of the limit of the Taylor tower is invertible.

\begin{question}
When are all elements of $\pi_0\,\Map^h_{\PSh(\DDisk_m^\s)}(\mathbb{E}_M^\s, \mathbb{E}_M^\s)$ invertible?
\end{question}

\subsection{Calculus for manifolds with boundary} We close with a brief description of the modifications necessary to use embedding calculus in the setting of manifolds with boundary \cite[\S9]{BoavidaWeiss:MCHS}. Fixing a smooth manifold $Z$, we write $\MMfld_Z^\s$ for the simplicial category of smooth manifolds with boundary identified with $Z$ by a diffeomorphism, and morphism spaces given by smooth embeddings that are the identity on $Z$. In particular, $\MMfld^\s=\MMfld^\s_\varnothing$. Let $\DDisk_{n,Z}^\s \subset \MMfld_Z^\s$ be the full subcategory on those objects that are diffeomorphic relative to $Z$ to a disjoint union of a collar $Z \times [0,1)$ and finitely many copies of $\mathbb{R}^n$. 


An object $P\in\MMfld^\s_Z$ determines a representable presheaf on $\MMfld_Z^\s$ and we denote its restriction to $\DDisk_{n,Z}^\s$ by $\mathbb{E}_{P,\partial}^\s$. As before, for an object $N\in\MMfld^\s_Z$ of dimension $n$, we obtain an approximation \[\Emb_\partial^\s(N,P)\lra T_\bullet\Emb_\partial^\s(N,P)=\Map^h_{\PSh(\DDisk_{n,Z}^\s)}(\mathbb{E}^\s_{N,\partial}, \mathbb{E}^\s_{P,\partial})\]
as a cofiltered derived mapping space of presheaves on $\DDisk_{n,Z}^\s$. The conclusion of Theorem \ref{thm:gkw} holds for this approximation, though handle dimension needs to be replaced by handle dimension relative to $Z$.

\subsection{A simplicial category of topological manifolds} Recall that a topological embedding $e \colon N \to M$ is \emph{locally flat} if, for every $p \in N$, there exist open neighborhoods $p\in U$ and $e(U)\subseteq V$ and homeomorphisms $U \cong \mathbb{R}^n$ and $V \cong \mathbb{R}^m$ fitting into the commuting diagram
\begin{equation}\label{eqn:locally-flat}\begin{tikzcd}N \supseteq U \dar[swap]{\cong} \rar{e|_U} & V \subseteq M \dar{\cong} \\
\mathbb{R}^n \rar{j} & \mathbb{R}^m,\end{tikzcd}\end{equation}
where $j \colon \mathbb{R}^n \to \mathbb{R}^m$ is the standard inclusion $(x_1,\ldots,x_n) \mapsto (x_1,\ldots,x_n,0,\ldots,0)$.

The simplicial category $\MMfld^\tp$ has objects topological manifolds of finite type and arbitrary dimension, with the $n$-simplices of the mapping space $\Map_{\MMfld^\tp}(N,M)$ given by commuting diagrams
\[\begin{tikzcd} \Delta^n \times N \arrow{rr} \arrow{rd} & & \Delta^n \times M \arrow{ld} \\
& \Delta^n & \end{tikzcd}\]
with the top map a locally flat embedding admitting charts as in \eqref{eqn:locally-flat} that commute with the projection to $\Delta^n$. This definition is chosen so that the isotopy extension theorem holds.

As every smooth embedding is locally flat as a consequence of the tubular neighborhood theorem, forgetting the smooth structure defines a simplicial functor from $\MMfld^\s$ to $\MMfld^\tp$.

\subsection{Microbundles} Microbundles were defined by Milnor in \cite{Milnor:MBI} and play the role of vector bundles for topological manifolds. 

\begin{definition} A \emph{retractive space} is a map $\pi \colon E\to B$ of topological spaces together with a section $\iota \colon B\to E$.\end{definition}

The spaces $E$ and $B$ are referred to as the \emph{total space} and \emph{base space}, and the maps $\pi$ and $\iota$ as \emph{projection} and \emph{zero section}. Via the zero section, we identify $B$ with its image in $E$, and we abusively refer to this image also as the zero section. We abusively refer to a retractive space simply by the letter $E$.

\begin{definition} A \emph{map $F \colon E_1 \to E_2$ of retractive spaces} is a continuous map $F \colon E_1 \to E_2$ such that the dashed filler exists in the commuting diagram
	\[\begin{tikzcd}
	B_1 \rar{\iota_1} \dar[dashed,swap]{f} & E_1 \arrow{d}{F} \rar{\pi_1} & B_1 \dar[dashed]{f} \\
	B_2 \rar{\iota_2} & E_2 \rar{\pi_2} & B_2.
	\end{tikzcd}\]
\end{definition}

Note that the map $F$ determines the map $f=\pi_2\circ F\circ\iota_1$. When we wish to emphasize the latter, we say that $F$ is a map of retractive spaces \emph{over $f$}, or \emph{over $B$} in the case $f=\id_B$. Retractive spaces and morphisms between them form a category $\Retr$.

\begin{definition}\label{def:microbundle} A \emph{microbundle} is a retractive space $E$ such that, for every $b \in B$ there is an open neighborhood $b\in U \subseteq E$ and a homeomorphism $U \cong \pi(U) \times \mathbb{R}^n$ such that the following diagram commutes:
	\[\begin{tikzcd}
		& U \arrow{dd}{\cong} \arrow{rd}{\pi} & \\
		\pi(U) \arrow{ru}{\iota} \arrow{rd} & & \pi(U) \\
		& \pi(U) \times \mathbb{R}^n \arrow{ru},& 
	\end{tikzcd}\]
where the bottom left map is induced by the inclusion of the origin and the bottom right is projection onto the first factor.
\end{definition}

\begin{example} The prototypical example of a microbundle is the tangent microbundle of a topological manifold---see Definition \ref{def:tangent bundle} below or \cite[Example (3)]{Milnor:MBI}.
\end{example}

The homeomorphisms which appear in the previous definition are called \emph{microbundle charts}. Note that, by invariance of domain, the parameter $n$ in Definition \ref{def:microbundle} is locally constant.

If $E$ is a retractive space, so is any open neighborhood $W$ of the zero section. The set of germs of maps $E_1 \to E_2$ of retractive spaces is the colimit
\[\colim_{B_1 \subseteq U \subseteq E_1} \Hom_{\Retr}(U,E_2),\]
over the poset of open subsets $U$ of $E_1$ containing $B_1$, which may be composed as follows:
\[\begin{tikzcd}\displaystyle\colim_{B_1\subseteq U\subseteq E_1}\Hom_{\Retr}(U,E_2)\times \colim_{B_2\subseteq V\subseteq E_2}\Hom_{\Retr}(V,E_3)\arrow[equals]{d}{\wr}& (F_1, F_2)\arrow[|->]{dd}\\
\displaystyle\colim_{B_1\subseteq U\subseteq E_1,\,B_2\subseteq V\subseteq E_2}\Hom_{\Retr}(U,E_2)\times \Hom_{\Retr}(V,E_3)\dar &\\
\colim_{B_1\subseteq W\subseteq E_1}\Hom_{\Retr}(W,E_3)&F_2 \circ F_1|_{F_1^{-1}(V)}.\end{tikzcd}\]
This composition is easily checked to be associative and unital.

\begin{definition}\label{def:map-of-microbundles} A \emph{map $F \colon E_1 \to E_2$ of microbundles} is a germ of a map of retractive spaces such that, for every $b \in B_1$, there are microbundle charts around $b$ and $F(b)$ fitting into the commuting diagram
	\[ \begin{tikzcd}
	U_1 \dar[swap]{\cong} \rar{F|_{U_1}} &[20pt] U_2 \dar{\cong} \\
	\pi(U_1) \times \mathbb{R}^{n_1} \rar{f|_{\pi(U_1)} \times j} & \pi(U_1) \times \mathbb{R}^{n_2},
	\end{tikzcd}\]
	where $j \colon \mathbb{R}^{n_1} \to \mathbb{R}^{n_2}$ is the standard inclusion and $f \colon B_1 \to B_2$ the map on base spaces induced by $F$.
\end{definition}

Note that maps of microbundles are fibrewise embeddings.

\begin{remark}When $E_1$ and $E_2$ are microbundles of the same fixed dimension, this definition reduces to \cite[Definition 6.3]{Milnor:MBI}.\end{remark}

\begin{example}The prototypical example of a map of microbundles is the topological derivative of a locally flat embedding---see Definition \ref{def:derivative} below.\end{example}

It is easy to check that maps of microbundles are closed under composition of germs of maps of retractive spaces, so we obtain a category $\Mic$ of numerable microbundles as a subcategory of the category $\Retr$ of retractive spaces. 

A retractive space $E$ with base $B$ can be pulled back along a continuous map $f \colon A \to B$ to give a retractive space $f^* E$ with base $A$; in the commutative diagram
\[\begin{tikzcd} A \rar \dar[swap]{f} & f^*E \rar \dar & A \dar{f} \\
B \rar{\iota} & E \rar{\pi} & B\end{tikzcd}\]
the right hand square is a pullback square, and the section $A \to f^* E$ is induced by the maps $\mr{id} \colon A \to A$ and $\iota\circ f \colon A \to E$. This exhibits a canonical map of retractive spaces $f^* E \to E$. If $E$ is a microbundle, then $f^*E$ is so as well, and the canonical map is a map of microbundles \cite[\S 3]{Milnor:MBI}. Given a microbundle $E$ with base $B$ and a topological space $X$, we let $X \times E\to X\times B$ denote the pullback of $E$ along the projection $X \times B \to B$.

Microbundles form a simplicial category $\MMic$ via the declaration \[\Map_{\MMic}(E_1,E_2)_n \coloneqq \Hom_{\Mic}\left(\Delta^n\times E_1,E_2\right).\] 
Concretely, an $n$-simplex $F \colon \Delta^n \times E_1 \to E_2$ can be described as a germ near the zero section $\Delta^n \times B_1$ of a commutative diagram
\[\begin{tikzcd} \Delta^n \times E_1 \arrow{rr}{(\pi_1,F)} \dar & & \Delta^n \times E_2 \dar \\
\Delta^n \times B_1 \arrow{rr}{(\pi_1,f)} \arrow{rd} & & \Delta^n \times B_2 \arrow{ld} \\
& \Delta^n & \end{tikzcd}\]
with the additional properties that 
\begin{enumerate}[\indent (i)]
	\item $(\pi_1,F)$ preserves the zero section, and 
	\item with respect to suitable microbundle charts, $(\pi_1,F)$ is given by the germ of $(\mr{id},f)|_{U_1} \times j \colon U_1 \times \mathbb{R}^{n_1} \to U_2 \times \mathbb{R}^{n_2}$ with $j$ the standard inclusion.
\end{enumerate}

Using that every topological horn is a retract of the corresponding topological simplex, it is easy to see that these mapping objects are Kan complexes.

Microbundles adhere to a covering homotopy theorem analogous to the classical result for vector bundles and fibre bundles, which has the following consequence. In it, $\TTop$ denotes simplicial category with objects topological spaces and morphism spaces the singular simplicial sets of mapping spaces. 

\begin{lemma}\label{lem:microbundle fibration}
	The natural map $\Map_{\MMic}(E_1, E_2)\to \Map_{\TTop}(B_1, B_2)$ is a Kan fibration with fibre over $f \colon B_1\to B_2$ canonically isomorphic to the simplicial subset of $\Map_{\MMic}(E_1, f^*E_2)$ with underlying map $\id_{B_1}$.
\end{lemma}

\begin{proof} We check that the map $\Map_{\MMic}(E_1, E_2)\to \Map_{\TTop}(B_1, B_2)$ is a Kan fibration, as the identification of the fibre is straightforward. To check the lifting property in a commutative diagram
	\[\begin{tikzcd} \Lambda^n_k \rar \dar & \Map_{\MMic}(E_1, E_2)\ \dar \\
	\Delta^n \rar & \Map_{\TTop}(B_1, B_2),\end{tikzcd}\]
	we first, by gluing, represent the top map by a map of microbundles $F \colon \Lambda^n_k \times E_1 \to E_2$ (here, and throughout, we employ the same notation for a simplicial set and its geometric realization). We similarly represent the bottom map by an extension of the map $f$ underlying $F$ to a continuous map $g \colon \Delta^n \times B_1 \to B_2$. 
	
	Let us denote by $\tilde{F}$, $\tilde{f}$, and $\tilde{g}$ the maps obtained from $F$, $f$, and $g$ using the homeomorphism of pairs 
	\[(\Delta^n,\Lambda^n_k) \cong (\Delta^{n-1} \times [0,1],\Delta^{n-1} \times \{0\}).\]
	Under this identification, the lifting problem at hand is equivalent to extending a map of microbundles $\tilde{F} \colon \Delta^{n-1} \times E_1 \to \tilde{f}^* E_2$ over $\Delta^{n-1} \times B_1$ to $\Delta^{n-1}\times [0,1] \times E_1 \to \tilde{g}^*E_2$ over $\Delta^{n-1} \times [0,1] \times B_1$. By the microbundle homotopy covering theorem \cite[Thm.~3.1]{Milnor:MBI}, there is an isomorphism of microbundles $\varphi \colon \tilde{g}^*E_2 \cong \tilde{f}^*E_2 \times [0,1]$ over $\Delta^{n-1} \times [0,1] \times B_1$. It is now evident that the desired extension exists, as we may form the product of $\tilde{F}$ with $[0,1]$ and apply $\varphi^{-1}$.
\end{proof}

\subsection{Topological tangency}\label{sec:top-tangent}

We come now to the motivating example of a microbundle, the ``tangent bundle'' of a topological manifold \cite[Lemma 2.1]{Milnor:MBI}.

\begin{definition}\label{def:tangent bundle}
	Let $M$ be a topological manifold. The \emph{topological tangent bundle} of $M$, denoted $T^\tp M$, is the retractive space \[M\xrightarrow{\Delta} M\times M\xrightarrow{\pi} M,\] where $\pi$ is the projection onto the first factor
\end{definition}

To verify that $T^\tp M$ is a microbundle, it suffices by locality to assume $M=\mathbb{R}^m$, in which case we may appeal to the commuting diagram
\[\begin{tikzcd}
		& \mathbb{R}^m\times\mathbb{R}^m \arrow{dd}{\cong} \arrow{rd}{\pi} &&(x,y)\ar[mapsto]{dd} \\
		\mathbb{R}^m \arrow{ru}{\Delta} \arrow{rd} & & \mathbb{R}^m \\
		& \mathbb{R}^m \times \mathbb{R}^m \arrow{ru},& &(x,y-x).
	\end{tikzcd}\] 

A smooth embedding has a derivative, and likewise a locally flat embedding $\varphi \colon N\to M$ has a topological derivative.

\begin{definition}\label{def:derivative} Let $\varphi \colon N \to M$ be a locally flat embedding. The \emph{topological derivative} $T^\tp \varphi \colon T^\tp N\to T^\tp M$ of $\varphi$ is the map of microbundles
	\[\begin{tikzcd} N \rar{\Delta} \dar[swap]{\varphi} & N \times N \rar{\pi} \dar{\varphi \times \varphi} & N \dar{\varphi} \\
	M \rar{\Delta} & M \times M \rar{\pi} & M.\end{tikzcd}\]
\end{definition}

To verify that $T^\tp\varphi$ is a map of microbundles, we may by locality assume that $\varphi$ is the standard inclusion $\mathbb{R}^n \hookrightarrow \mathbb{R}^m$, in which case the bundle chart constructed above implies the claim. Thus, we obtain a simplicial functor $T^\tp \colon \MMfld^\tp \to \MMic$

\subsection{Comparing tangent bundles} We write $\VVec$ for the simplicial category of numerable vector bundles and maps of vector bundles, which for us are always fibrewise linear injections. Specifically, given vector bundles $E_1\to B_1$ and $E_2\to B_2$, an $n$-simplex of $\Map_{\VVec}(E_1, E_2)$ is a commuting diagram \[
\begin{tikzcd}
E_1\times\Delta^n\ar{d}\ar{rr}&&E_2\times\Delta^n\ar{d}\\
B_1\times\Delta^n\ar{dr}\ar{rr}&&B_2\times\Delta^n\ar{dl}\\
&\Delta^n
\end{tikzcd}\] in which the top map is a fibrewise linear injection. As before, these mapping spaces are Kan complexes.

We record the following standard consequence of the covering homotopy theorem for vector bundles \cite[Theorem 4.3]{Husemoller:FB}, whose proof proceeds along the lines of Lemma \ref{lem:microbundle fibration}.

\begin{lemma}\label{lem:bundle fibration}
	The natural map $\Map_{\VVec}(E_1, E_2)\to \Map_{\TTop}(B_1, B_2)$ is a Kan fibration with fibre over $f \colon B_1\to B_2$ canonically isomorphic to the simplicial subset of $\Map_{\VVec}(E_1, f^*E_2)$ with underlying map $\id_{B_1}$.
\end{lemma}

Vector bundles are in particular microbundles, and assigning to a vector bundle its underlying microbundle extends to a simplicial functor $\MMic \to \VVec$.

We now have two ways of extracting a microbundle from a smooth manifold $M$: first, by considering its tangent bundle $TM$ as a microbundle; second, by forgetting the smooth structure and considering $T^\tp M$. To compare these, we use the following construction.

\begin{construction}\label{construction:metric micro map}
	Fix a Riemannian metric on the smooth manifold $M$. The $t=1$ exponential map is defined on a neighborhood $U$ of the zero section, and the assignment \begin{align*}
	\exp_M \colon TM\supseteq U&\lra T^\tp M\\
	(p,v)&\longmapsto (p, \exp(p,v))
	\end{align*}
	defines a map of retractive spaces.
\end{construction}

The following is \cite[Theorem 2.2]{Milnor:MBI}.

\begin{proposition}[Milnor]\label{thm:metric micro iso}The map of Construction \ref{construction:metric micro map} defines an isomorphism of microbundles $TM \overset{\sim}\to T^\tp M$.
\end{proposition}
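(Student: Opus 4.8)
The plan is to reduce the statement to the inverse function theorem by writing $\exp_M$ out in suitable local coordinates; the only input from Riemannian geometry that this requires is that the differential of the based exponential map $\exp_p\colon T_pM\to M$ at the origin is the identity of $T_pM$. Construction~\ref{construction:metric micro map} already provides that $\exp_M$ is a map of retractive spaces covering $\id_M$, so the two remaining tasks are to see that it is a map of microbundles and that it is invertible as such. I would dispatch both at once by proving that, near each point of the zero section, $\exp_M$ is a diffeomorphism onto an open subset of $T^\tp M$, and then appealing to the fact---part of the basic theory of microbundles in \cite{Milnor:MBI}---that a map of retractive spaces over $\id_M$ between microbundles of equal fiber dimension which is a local homeomorphism near the zero section is automatically an isomorphism of microbundles.

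For the local computation I would fix $p\in M$ and a smooth chart $\phi\colon V\xrightarrow{\ \cong\ }\phi(V)\subseteq\mathbb{R}^m$ with $p\in V$. The chart $\phi$ induces the standard microbundle chart $TM|_V\cong\phi(V)\times\mathbb{R}^m$, $(q,v)\mapsto(\phi(q),d\phi_q(v))$, and likewise the microbundle chart for $T^\tp M|_V$ near the diagonal given by $(q,q')\mapsto(\phi(q),\phi(q')-\phi(q))$, exactly as in the verification following Definition~\ref{def:tangent bundle}. In these charts $\exp_M$ takes the form $(x,w)\mapsto(x,E(x,w))$ with
\[
E(x,w)=\phi\bigl(\exp_{\phi^{-1}(x)}\bigl((d\phi_{\phi^{-1}(x)})^{-1}(w)\bigr)\bigr)-x.
\]
Next I would record three properties of $E$: it is smooth, because the Riemannian exponential is smooth as a partially defined map $TM\to M$; it satisfies $E(x,0)=0$, because $\exp_M$ preserves the zero section; and $\partial_wE(x,0)=\mathrm{id}$, which is exactly the identity $d(\exp_q)_0=\mathrm{id}_{T_qM}$ transported into these coordinates. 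Hence the Jacobian of $(x,w)\mapsto(x,E(x,w))$ at any point $(x,0)$ of the zero section is block lower triangular with identity diagonal blocks, so it is invertible and the inverse function theorem yields the desired local diffeomorphism. Since $\exp_M$ covers $\id_M$, the images of these local diffeomorphisms fill out a neighborhood of the zero section; combined with the equality of fiber dimensions---which makes the standard inclusion $j$ in Definition~\ref{def:map-of-microbundles} the identity---this lets me invoke the principle cited above and conclude that $\exp_M$ is an isomorphism of microbundles.

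The step I expect to require the most care is not the geometry---which is exhausted by the smoothness of $\exp$ and the relation $d(\exp_p)_0=\mathrm{id}$---but the bookkeeping with microbundle charts: reconciling the strict product form demanded by Definitions~\ref{def:microbundle} and~\ref{def:map-of-microbundles} with the fact that one is really working with germs along the zero section, and passing from invertibility of $\exp_M$ at individual points of the zero section to invertibility of its germ along the whole of it. A secondary point worth flagging is that, in contrast to $TM$ and $T^\tp M$ themselves, the map $\exp_M$ is genuinely not local on $M$ (it depends on the global choice of metric), which is why one cannot simply reduce to $M=\mathbb{R}^m$ with its standard metric and must instead run the coordinate computation above.
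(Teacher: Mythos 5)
The paper offers no proof of this proposition---it is stated as an attribution and the text immediately preceding it simply cites \cite[Theorem 2.2]{Milnor:MBI}. Your argument is a correct and detailed reconstruction of Milnor's own proof: he likewise considers $(p,v)\mapsto(p,\exp_p v)$, observes that its Jacobian is nonsingular along the zero section (a step he leaves as a ``straightforward computation,'' which your chart-level calculation of $E(x,w)$ carries out), and then invokes the same soft microbundle fact you cite, namely that a germ of a local homeomorphism over $\id_M$ carrying zero section to zero section between microbundles of the same fiber dimension is automatically a microbundle isomorphism, since the homeomorphism can be used to transport charts. So the proposal is correct and follows essentially the same route as the cited source, with nothing to add or subtract on the paper's side since the paper delegates the argument entirely to Milnor.
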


\section{Formally smooth manifolds}\label{section:calc and formal smooth}

The first goal of this section is to factor the forgetful functor from smooth to topological manifolds as in the commuting diagram
\[\label{eqn:mfd-cats-zigzag} \begin{tikzcd} \MMfld^\s \rar & \MMfld^\tp \\
\MMfld^\r \uar{\simeq} \rar & \MMfld^\f \uar. \end{tikzcd}\] The simplicial category $\MMfld^\r$ is a category of Riemannian manifolds under embeddings respecting the metric up to specified homotopy. It is introduced because Construction \ref{construction:metric micro map} requires a Riemannian metric. As a result of the homotopy equivalence between $\mr{O}(n)$ and $\mr{GL}(n)$, the leftmost functor is an equivalence, and the role of $\MMfld^\r$ is as a convenient proxy for $\MMfld^\s$. The simplicial category $\MMfld^\f$ is a category of \emph{formally smooth} manifolds, which is to say manifolds equipped with vector bundle refinements of their topological tangent bundles. 

The second goal of this section is to prove Theorem \ref{thm:independence}, which asserts that all information detectable by embedding calculus is contained in $\MMfld^\f$.

\subsection{Simplicial categories of Riemannian and formally smooth manifolds} \label{sec:simpl-cats} In this section, we have in mind the model of the homotopy pullback of simplicial categories explained in Appendix \ref{section:construction} following \cite{Andrade:FMIEA}.

We begin with the construction of $\MMfld^\r$. We write $\MMet$ for the simplicial category whose objects are vector bundles endowed with Riemannian metrics and whose morphisms are fibrewise linear isometries, which are assembled into simplicial sets in the same manner as in $\VVec$. As before, these mapping spaces are Kan complexes and we have the following consequence of local triviality.

\begin{lemma}\label{lem:bundle fibration met}
	The natural map $\Map_{\MMet}(E_1, E_2)\to \Map_{\TTop}(B_1, B_2)$ is a Kan fibration with fibre over $f \colon B_1\to B_2$ canonically isomorphic to the simplicial subset of $\Map_{\MMet}(E_1, f^*E_2)$ with underlying map $\id_{B_1}$.
\end{lemma}

There is a canonical simplicial forgetful functor from $\MMet$ to $\VVec$.

\begin{proposition}\label{cor:bundles are metric}
	The forgetful functor $\MMet\to \VVec$ is essentially surjective and induces weak equivalences on mapping spaces.
\end{proposition}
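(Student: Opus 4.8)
The plan is to prove the two assertions separately, in both cases reducing to a statement about a single fiber of the Kan fibrations of Lemmas \ref{lem:bundle fibration} and \ref{lem:bundle fibration met} via a standard fiberwise argument. For essential surjectivity, I would take any numerable vector bundle $E \to B$ and produce a Riemannian metric on it: choose a numerable trivializing cover $\{U_\alpha\}$, a subordinate partition of unity $\{\rho_\alpha\}$, pull back the standard inner product on $\mathbb{R}^n$ through each local trivialization to get a metric $g_\alpha$ on $E|_{U_\alpha}$, and set $g = \sum_\alpha \rho_\alpha g_\alpha$. Since a convex combination of inner products is an inner product, $g$ is a fiberwise inner product, and it is continuous because the sum is locally finite. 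This produces an object of $\MMet$ whose underlying vector bundle is $E$, giving essential surjectivity on the nose.

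For the statement about mapping spaces, the forgetful functor $\MMet \to \VVec$ sits over the identity on $\MMfld^\tp$: both $\Map_{\MMet}(E_1,E_2) \to \Map_{\TTop}(B_1,B_2)$ and $\Map_{\VVec}(E_1,E_2) \to \Map_{\TTop}(B_1,B_2)$ are Kan fibrations by Lemmas \ref{lem:bundle fibration met} and \ref{lem:bundle fibration}, and the forgetful functor commutes with the projections to $\Map_{\TTop}(B_1,B_2)$. Hence it suffices to check that the induced map on fibers over each $f \colon B_1 \to B_2$ is a weak equivalence. By the fiber identifications in those lemmas, this fiber comparison is the map from $\{$isometric embeddings $E_1 \hookrightarrow f^*E_2$ over $B_1\}$ to $\{$linear injections $E_1 \hookrightarrow f^*E_2$ over $B_1\}$ at the level of simplicial mapping spaces. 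Writing $E := f^*E_2$ and $E' := E_1$, I would compare these two simplicial sets by analyzing the space of fiberwise linear injections $E' \hookrightarrow E$ over $B$: it is the section space of a fiber bundle over $B$ whose fiber over $b$ is the Stiefel-type manifold of linear injections $E'_b \hookrightarrow E_b$, and likewise the isometric embeddings form the section space of the subbundle with fiber the isometric linear injections. Fiberwise, the inclusion of isometric injections into all injections is a deformation retract — one runs Gram--Schmidt on the image, which is continuous and natural in the target inner product, hence compatible across the bundle — so the inclusion of subbundles is a fiberwise homotopy equivalence over $B$, and therefore induces a weak equivalence on section spaces (and on the associated simplicial mapping objects, since the $\Delta^\bullet$-levels are exactly section spaces of the $\Delta^\bullet$-families of such bundles). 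Combined with the fibration comparison, this gives that $\Map_{\MMet}(E_1,E_2) \to \Map_{\VVec}(E_1,E_2)$ is a weak equivalence.

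The only genuinely delicate point is making the fiberwise Gram--Schmidt deformation retraction global and simplicially coherent: one must check that the retraction of linear injections onto isometric ones can be performed continuously in the base point $b \in B$ and compatibly with the simplicial structure (i.e. as a deformation retraction of the relevant bundles over $B \times \Delta^\bullet$), and that "numerable" is preserved throughout so that the covering-homotopy inputs apply. Since Gram--Schmidt is given by explicit continuous formulas in the entries of the linear map and the Gram matrix of the target metric, and since these formulas are natural in $B$, this coherence is routine but is where all the real content lies; everything else is bookkeeping with the two Kan fibrations and their fibers. I expect the write-up to dispatch this by citing the analogue of Lemma \ref{lem:microbundle fibration}'s proof technique and invoking the contractibility of the space of metrics refining a given one, reducing the mapping-space statement to essential surjectivity plus a fiberwise contractibility claim.
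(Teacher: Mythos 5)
Your proposal is correct and follows essentially the same route as the paper: essential surjectivity via existence of a Riemannian metric on any numerable bundle (the paper simply cites this fact, you give the partition-of-unity construction), and the mapping-space comparison via Lemmas \ref{lem:bundle fibration} and \ref{lem:bundle fibration met}, reducing to the fibers over a fixed $f$, which are section spaces of Stiefel-manifold bundles; the Gram--Schmidt deformation retraction you invoke is precisely the paper's appeal to the homotopy equivalence $\mr{O}(n) \hookrightarrow \mr{GL}(n)$ (or compact into non-compact Stiefel). Your final guess about what the write-up does --- contractibility of the space of refining metrics --- is not how the paper argues, but the fiberwise comparison you actually carry out is the paper's argument.
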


\begin{proof}
	The first claim follows from the fact that every numerable vector bundle admits a Riemannian metric. For the second claim, it suffices by Lemmas \ref{lem:bundle fibration} and \ref{lem:bundle fibration met} to show that the maps induced on point-set fibres in the commuting diagram
	\[\begin{tikzcd}
	\Map_{\MMet}(E_1,E_2) \rar \dar & \Map_{\VVec}(E_1,E_2) \dar \\
	\Map_{\TTop}(B_1, B_2) \rar[equals] & \Map_{\TTop}(B_1, B_2),\end{tikzcd}\]
	are weak equivalences. By the same results, we may identify the lefthand fibre over $f$ with the singular simplicial set of the space of sections of the associated bundle of non-compact Stiefel manifolds, whose fibres are general linear groups (resp.\~righthand, compact, orthogonal). The conclusion then follows as the inclusion of the orthogonal group into the general linear group is a homotopy equivalence.
\end{proof}

We use this to define $\MMfld^\r$, which is a homotopy pullback as in Appendix \ref{section:construction}.

\begin{definition}\label{def:riemannian}
	The \emph{simplicial category of Riemannian manifolds} is the homotopy pullback in the diagram \[\begin{tikzcd} \MMfld^\r\ar[r]\ar[d]&\MMfld^\s \dar{T}\\
	\MMet\ar[r]&\VVec \end{tikzcd}\] of simplicial categories over $\TTop$.
\end{definition}

\begin{notation}
	We denote the morphism spaces in $\MMfld^\r$ by $\Emb^\r(-,-)$.
\end{notation}

Thus, an object of $\MMfld^\r$ is a smooth manifold with a choice of metric, and a morphism is a fibrewise isometry covering a smooth embedding, together with a fibrewise homotopy through linear injections to the derivative of the embedding. 

As the following result illustrates, the forgetful functor exhibits $\MMfld^\r$ as a proxy for $\MMfld^\s$. This proxy is easier to map out of. 

\begin{proposition}\label{prop:smooth and riemannian}
	The forgetful functor $\MMfld^\r\to \MMfld^\s$ is essentially surjective and induces weak equivalences on mapping spaces.
\end{proposition}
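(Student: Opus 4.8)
The plan is to deduce this from the corresponding statement about bundles, Proposition \ref{cor:bundles are metric}, together with the general behavior of homotopy pullbacks of simplicial categories. Recall that $\MMfld^\r$ was defined as the homotopy pullback of $T\colon\MMfld^\s\to\VVec$ along the forgetful functor $\MMet\to\VVec$. Since the latter functor is essentially surjective and a Dwyer--Kan equivalence by Proposition \ref{cor:bundles are metric}, it is a general fact about the model for homotopy pullbacks recorded in Appendix \ref{section:construction} that base-changing along it produces an essentially surjective Dwyer--Kan equivalence; that is, the projection $\MMfld^\r\to\MMfld^\s$ inherits these properties. So the real content is to verify that the model of homotopy pullback in use has this stability property.

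Concretely, I would proceed as follows. First, essential surjectivity: given a smooth manifold $M$, choose a Riemannian metric on $TM$ — using Proposition \ref{cor:bundles are metric} (or just directly, since numerable bundles admit metrics) — and take the resulting object of $\MMet$ together with the tautological identity homotopy; by construction of the homotopy pullback this defines an object of $\MMfld^\r$ lying over $M$, so the functor hits every object. Second, fully faithfulness up to weak equivalence: fix objects $\widetilde N,\widetilde M\in\MMfld^\r$ over $N,M\in\MMfld^\s$. Unwinding the definition of mapping spaces in the homotopy pullback, $\Map_{\MMfld^\r}(\widetilde N,\widetilde M)$ is the homotopy pullback of simplicial sets
\[
\Map_{\MMfld^\s}(N,M)\times_{\Map_{\TTop}(N,M)}\Map_{\MMet}(TN_{\text{met}}, TM_{\text{met}})
\]
(with the appropriate path-space fattening supplied by the Andrade model), and the map to $\Map_{\MMfld^\s}(N,M)$ is the projection. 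To show this projection is a weak equivalence it suffices to show the other leg $\Map_{\MMet}(TN_{\text{met}},TM_{\text{met}})\to\Map_{\TTop}(N,M)$ is a weak equivalence after base change, i.e.\ that on each homotopy fiber over a point of $\Map_{\TTop}(N,M)$ the map $\Map_{\MMet}\to\Map_{\VVec}$ is a weak equivalence — but this is exactly Proposition \ref{cor:bundles are metric}, whose proof already identified these fibers as section spaces of Stiefel bundles and used $\mathrm{O}(n)\hookrightarrow\mathrm{GL}(n)$. Invoking Lemmas \ref{lem:bundle fibration} and \ref{lem:bundle fibration met} to control fibrancy, one concludes the homotopy pullback square maps by a weak equivalence to its $\MMfld^\s$-leg.

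The main obstacle I anticipate is purely bookkeeping rather than conceptual: one must be careful that the Andrade-style model of the homotopy pullback of simplicial categories genuinely computes, on mapping spaces, the homotopy pullback of the relevant mapping simplicial sets, and that the relevant maps are Kan fibrations (or at least that one can replace them by such) so that strict pullbacks compute homotopy pullbacks. This is where Lemmas \ref{lem:microbundle fibration}, \ref{lem:bundle fibration}, and \ref{lem:bundle fibration met} do their work. Once that framework is in place, the argument is a formal consequence of Proposition \ref{cor:bundles are metric}: base change of an essentially surjective Dwyer--Kan equivalence of simplicial categories along any functor, within this model, remains an essentially surjective Dwyer--Kan equivalence. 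I would therefore state and use a short lemma to that effect (or cite Appendix \ref{section:construction}) and then simply apply it.
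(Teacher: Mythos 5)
Your argument is essentially the same as the paper's, which proves essential surjectivity by choosing a Riemannian metric and handles mapping spaces by combining Proposition \ref{prop:homotopy pullback}, Lemma \ref{lem:bundle fibration}, and Corollary \ref{cor:bundles are metric}. One small wobble in your write-up: the homotopy pullback computing $\Map_{\MMfld^\r}(\widetilde N,\widetilde M)$ per Proposition \ref{prop:homotopy pullback} sits over $\Map_{\VVec}(TN,TM)$, not over $\Map_{\TTop}(N,M)$ as your displayed formula suggests; your subsequent paragraph gets back on track by reducing to the statement that $\Map_{\MMet}\to\Map_{\VVec}$ is a weak equivalence, which is exactly Corollary \ref{cor:bundles are metric}, so the argument lands correctly.
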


\begin{proof}
The first claim follows from the statement that every smooth manifold admits a Riemannian metric. The second claim follows from Proposition \ref{prop:homotopy pullback}, Lemma \ref{lem:bundle fibration}, and Corollary \ref{cor:bundles are metric}.
\end{proof}

We continue with the construction of $\MMfld^\f$, which is a homotopy pullback as in Appendix \ref{section:construction}.

\begin{definition}\label{def:formally smooth}
	The \emph{simplicial category of formally smooth manifolds} is the homotopy pullback in the diagram \[\begin{tikzcd} 
	\MMfld^\f\ar[r]\ar[d]&\MMfld^\tp \dar{T^\tp}\\
	\VVec\ar[r]&\MMic\end{tikzcd}\] of simplicial categories over $\TTop$.
\end{definition}

\begin{notation}
	We denote the morphism spaces in $\MMfld^\f$ by $\Emb^\f(-,-)$.
\end{notation}

Thus, an object of $\MMfld^\f$ is a topological manifold with a vector bundle refinement of its topological tangent bundle, and a morphism is a fibrewise linear injection covering a topological embedding, together with a fibrewise homotopy through embeddings to the topological derivative of the embedding.

It remains to construct the functor $\MMfld^\r \to \MMfld^\f$.

\begin{construction}\label{construction:example functor}
	We obtain $\MMfld^\r\to \MMfld^\f$ as an instance of Construction \ref{construction:functor}. The requisite data are the following.
	\begin{enumerate}
		\item The simplicial functor $\MMfld^\r\to\MMet\to \VVec$ associating to a Riemannian manifold its tangent bundle.
		\item The simplicial functor $\MMfld^\r \to \MMfld^\s\to \MMfld^\tp$ associating to a Riemannian manifold its underlying topological manifold.
		\item The natural isomorphism indicated by the thick arrow between left-bottom and top-right compositions in the diagram \[\begin{tikzcd} \MMfld^\r\arrow{dd}\arrow{rr}&[-10pt]&[-10pt]\MMfld^\s\arrow{dd}\\[-5pt]\\[-5pt]
		\VVec\arrow[Rightarrow]{uurr}{\cong} \arrow{rr} && \MMic\end{tikzcd}\] arising from Construction \ref{construction:metric micro map}.
	\end{enumerate}
\end{construction}

\begin{remark}
Upon restricting to the respective full subcategories of manifolds of dimension different from $4$, the functor of Construction \ref{construction:example functor} becomes an equivalence by smoothing theory \cite[Essays IV, V]{KirbySiebenmann:FETMST}.
\end{remark}

\subsection{Smooth embeddings of Euclidean spaces}\label{section:smooth euclidean embeddings} In the next sections, we assemble results on various types of embeddings of Euclidean spaces, which will be used below in the proof of Theorem \ref{thm:independence}. We begin in the smooth context, where these results are standard, but we include proofs for the sake of completeness. 

Fix a smooth $m$-manifold $M$ and a natural number $0<n\leq m$, as well as a point $p \in M$. We introduce four simplicial sets, the first three defined as pullbacks of diagrams of the form \[\begin{tikzcd}
	&[-5pt] X\ar{d}\\[-5pt]
	\{p\}\ar{r}&M.
\end{tikzcd}\]

\begin{enumerate}
	\item Taking $X=\Map_{\VVec}(T\mathbb{R}^n,TM)$ mapping to $M$ by evaluation at the origin followed by projection, we obtain $\Map_{\VVec,p}(T\mathbb{R}^n,TM)$.
	\item Taking $X=\Map_{\VVec}(T_0\mathbb{R}^n,TM)$ mapping to $M$ in the same way, we obtain $\Map_{\VVec,p}(T_0\mathbb{R}^n,TM)$, otherwise known as the (non-compact) Stiefel manifold of $n$-planes in $T_p M$.
	\item Fixing an open subset $0\in U \subseteq \mathbb{R}^n$, and taking $X=\Emb^\s(U,M)$ mapping to $M$ by evaluation at the origin, we obtain  $\Emb_p^\s(U,M)$.
	\item Finally, we write $G^\s_p(n,M) \coloneqq \colim_{0\in U\subseteq \mathbb{R}^n}\Emb_p^\s(U, M)$ for the simplicial set of germs of smooth embeddings (here $U$ ranges over open subsets containing the origin).
\end{enumerate}

\begin{lemma}\label{lem:smooth germ} All maps in the commuting diagram \[\begin{tikzcd} \Emb_p^\s(\mathbb{R}^n,M) \dar \rar & G^\s_p(n,M) \dar \\
	\Map_{\VVec,p}(T\mathbb{R}^n,TM) \rar & \Map_{\VVec,p}(T_0\mathbb{R}^n,TM)\end{tikzcd}\]
	are weak equivalences.
\end{lemma}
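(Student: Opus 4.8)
The plan is to show each edge of the square is a weak equivalence and then conclude the remaining one by two-out-of-three. The bottom horizontal map is the easiest: restricting a linear map $T\mathbb{R}^n \to TM$ to the fiber over the origin is a fibration with contractible fibers (the space of linear maps agreeing with a prescribed one over $0$ deformation retracts onto it by scaling), so $\Map_{\VVec,p}(T\mathbb{R}^n,TM)\to \Map_{\VVec,p}(T_0\mathbb{R}^n,TM)$ is a weak equivalence; more precisely, both sides are total spaces of fibrations over the same base (via Lemma \ref{lem:bundle fibration}), and on fibers the map is a homotopy equivalence since a bundle map out of the trivial bundle $\underline{\mathbb{R}^n}$ over $\mathbb{R}^n$ covering $\mathrm{id}_{\mathbb{R}^n}$ is the same as a map of the fibers at $0$ together with a contractible choice of extension.

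Next I would handle the right vertical map, $G^\s_p(n,M)\to \Map_{\VVec,p}(T_0\mathbb{R}^n,TM)$, which sends a germ of a smooth embedding to its derivative at $0$. Unwinding the definition of $G^\s_p$ as a filtered colimit over shrinking neighborhoods $U$, a germ is represented by some $\Emb_p^\s(U,M)$, and taking the derivative at $0$ is well-defined on germs. I claim this is a weak equivalence because it is (levelwise, or after passing to the colimit) a trivial Kan fibration: given a compatible family of germs on the boundary of a simplex together with a lift of the derivatives, one can integrate — use the exponential map of Construction \ref{construction:metric micro map} (or simply a chart) to produce an actual embedding of a possibly smaller neighborhood realizing the prescribed jet, since the space of germs of embeddings with prescribed $1$-jet at $0$ is contractible (again by a rescaling/Alexander-trick argument in the chart $\mathbb{R}^m$: conjugate by $x\mapsto tx$ and let $t\to 0$ to deform any such germ to its linearization). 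This is the step I expect to be the main obstacle, since it requires care in the colimit over $U$ and in making the rescaling homotopy smooth and compatible across simplices; the cleanest route is probably to exhibit an explicit deformation retraction of $\Emb_p^\s(U,M)$, or rather of the germ space, onto the Stiefel manifold using normal coordinates at $p$.

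The top horizontal map $\Emb_p^\s(\mathbb{R}^n,M)\to G^\s_p(n,M)$ is restriction of an embedding of all of $\mathbb{R}^n$ to its germ at $0$. Here I would argue that $\Emb_p^\s(\mathbb{R}^n,M)\to G^\s_p(n,M)$ is a weak equivalence by producing, compatibly, a way to extend a germ of an embedding to a genuine embedding of $\mathbb{R}^n$: precompose the given embedding of a small ball with a diffeomorphism $\mathbb{R}^n\xrightarrow{\sim} B_\epsilon(0)$, which does not change the germ at $0$ up to a contractible choice. Concretely, one shows that the fiber of this map over a germ is the space of ``extensions'', which is contractible (or that the map is a trivial fibration on each $\Emb_p^\s(U,M)$-stage compatibly with the colimit). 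Finally, with three sides known to be weak equivalences, the fourth — the left vertical $\Emb_p^\s(\mathbb{R}^n,M)\to \Map_{\VVec,p}(T\mathbb{R}^n,TM)$ — follows by two-out-of-three, completing the proof. Throughout, one should note that all four simplicial sets map compatibly to $M$ by evaluation at the origin, and Lemma \ref{lem:bundle fibration} lets one reduce the bottom and left maps to statements about fibers, which is the organizing principle that keeps the argument honest.
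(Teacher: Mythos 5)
Your proposal is correct and follows essentially the same decomposition as the paper: prove the top, bottom, and right maps are weak equivalences directly and deduce the left by two-out-of-three, using the same key ideas (restriction to balls for the top, contractibility of $\mathbb{R}^n$ for the bottom, and the Alexander/rescaling trick for the right). The paper streamlines the right vertical map—which you correctly flag as the delicate step—by reducing to $M=\mathbb{R}^m$ via compactness of families of germs and then exhibiting the Stiefel inclusion as an explicit homotopy inverse, which is just a cleaner packaging of the normal-coordinates deformation retraction you suggest at the end.
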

\begin{proof}
For the top map, we note that the restriction $\Emb_p^\s(\mathbb{R}^n, M)\to \Emb_p^\s(U,M)$ is a weak equivalence whenever $U$ is an open ball centered at the origin, since the inclusion $U\subseteq\mathbb{R}^n$ isotopic to a diffeomorphism relative to the origin. The claim now follows from observation that the subposet of such open balls is final in the poset of all open neighborhoods of the origin, and both are filtered. For the bottom map, the claim is a consequence of the contractibility of $\mathbb{R}^n$ and the homotopy covering theorem. For the righthand map, the claim may be tested on compact families of germs, so we may assume that $M=\mathbb{R}^m$. In this case, the Stiefel manifold includes canonically into $\Emb_p^\s(\mathbb{R}^n, \mathbb{R}^m)$, and composing with the map to $G_p^\s(n,\mathbb{R}^m)$ supplies a homotopy inverse. For the lefthand map, the claim follows by two-out-of-three.
\end{proof}

We will also have use for a mild generalization of the claim regarding the top map. Let $N=\sqcup_{i\in I} \mathbb{R}^{n_i}$ for some finite set $I$, and fix a collection $p_i\in M$ of points for each $i\in I$ such that $p_i\neq p_j$ if $i\neq j$. Let $\Emb^\s_{p_I}(N,M)\subseteq\Emb^\s(N,M)$ be the simplicial subset of embeddings sending the origin in $\mathbb{R}^{n_i}$ to $p_i$. 

\begin{lemma}\label{lem:multiple germs}
The canonical map $\Emb_{p_I}^\s(N,M)\to \prod_{i\in I}G_{p_i}^\s(n_i,M)$ is a weak equivalence.
\end{lemma}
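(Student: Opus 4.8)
The plan is to reduce Lemma \ref{lem:multiple germs} to the single-point case, Lemma \ref{lem:smooth germ} (specifically, to the statement that $\Emb_p^\s(\mathbb{R}^n,M)\to G_p^\s(n,M)$ is a weak equivalence), by a locality argument. First I would observe that, since the points $p_i\in M$ are pairwise distinct, one may choose pairwise disjoint open neighborhoods $p_i\in V_i\subseteq M$. The key point is that an embedding of $N=\sqcup_{i\in I}\mathbb{R}^{n_i}$ sending each origin to $p_i$ may be isotoped, relative to the origins, to one whose $i$-th component factors through $V_i$: concretely, using the isotopy that rescales $\mathbb{R}^{n_i}$ towards the origin (as in the proof of the top map in Lemma \ref{lem:smooth germ}), the image of any compact family of such embeddings can be shrunk into $\sqcup_i V_i$. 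This reasoning should be carried out at the level of compact families (i.e.\ testing on simplices), exactly as in the germ arguments above.

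Next I would identify the simplicial subset $\Emb^\s_{p_I,\,V}(N,M)\subseteq\Emb^\s_{p_I}(N,M)$ of embeddings whose $i$-th component lands in $V_i$; the shrinking argument shows the inclusion is a weak equivalence, and moreover this subset is canonically isomorphic to the product $\prod_{i\in I}\Emb_{p_i}^\s(\mathbb{R}^{n_i},V_i)$, since disjointness of the $V_i$ means the components of such an embedding are mutually unconstrained. Under this identification, the canonical map to $\prod_{i\in I}G_{p_i}^\s(n_i,M)$ becomes the product of the maps $\Emb_{p_i}^\s(\mathbb{R}^{n_i},V_i)\to G_{p_i}^\s(n_i,V_i)\to G_{p_i}^\s(n_i,M)$. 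The first factor is a weak equivalence by Lemma \ref{lem:smooth germ} applied to $V_i$, and the second is an isomorphism because germs at $p_i$ only see an arbitrarily small neighborhood of $p_i$, which may be taken inside $V_i$. A product of weak equivalences between Kan complexes is a weak equivalence, so the composite is as well, and chasing through the identifications gives the lemma.

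The main obstacle I anticipate is making the shrinking/disjointification step fully rigorous in families: one needs that a compactly-parametrized family of embeddings $\sqcup_i\mathbb{R}^{n_i}\to M$, each preserving origins, admits a parametrized isotopy (fixing origins) into the disjoint opens $V_i$, and that this isotopy can be taken to be the identity on those members of the family that already lie in $\sqcup_i V_i$, so that it genuinely exhibits a deformation retraction of simplicial sets (or at least a homotopy inverse). This is the same technical maneuver underlying Lemma \ref{lem:smooth germ}, and I would handle it by the same device — restricting to the cofinal subposet of families that factor through small balls around each $p_i$ contained in $V_i$, and using that both the relevant posets are filtered — rather than by writing down explicit isotopies. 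Everything else is formal: disjointness of the $V_i$ turns a mapping space into a product, and germs are insensitive to the ambient manifold beyond a neighborhood of the basepoint.
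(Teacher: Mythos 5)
Your approach — choose pairwise disjoint $V_i\ni p_i$, shrink into them, and reduce to the one-point Lemma \ref{lem:smooth germ} — is close in spirit to the paper's. The paper works instead with the multi-germ space $G^\s_{p_I}(n_I,M)\coloneqq\colim_{U\subseteq N}\Emb^\s_{p_I}(U,M)$, shows $\Emb^\s_{p_I}(N,M)\to G^\s_{p_I}(n_I,M)$ is a weak equivalence by the same cofinal-filtered-balls device as Lemma \ref{lem:smooth germ}, and then shows $G^\s_{p_I}(n_I,M)\to\prod_i G^\s_{p_i}(n_i,M)$ is an \emph{isomorphism}, surjectivity coming from the observation that any compact family of $I$-tuples of germs can be represented by pairwise disjoint embeddings. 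Passing to the colimit over shrinking \emph{domains} absorbs the ``uniform $\epsilon$ on a compact family'' bookkeeping so cleanly that disjointification becomes an isomorphism rather than merely a weak equivalence.

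Your version has a soft spot, and your own diagnosis of it is a bit off. The hard part of showing $\Emb^\s_{p_I,V}(N,M)\hookrightarrow\Emb^\s_{p_I}(N,M)$ is a weak equivalence is \emph{not} that the shrinking isotopy must be the identity on members of a family already lying in $\sqcup_i V_i$ — there is in general no way to arrange that, since the set of parameter values $t$ for which $e_t$ already lands in $\sqcup_i V_i$ is not open (the noncompact image may approach $\partial V_i$), so one cannot interpolate with a bump function on the parameter. What saves the day is a different observation: precomposition with a shrinking isotopy $\phi_s$ of $N$ (from the identity onto a small $\sqcup_i B_\epsilon$) \emph{preserves} the subspace, i.e.\ if $e$ lands in $\sqcup_i V_i$ then so does $e\circ\phi_s$, and this, together with compactness over simplices, kills the relative homotopy groups; one still has to keep track of the moving basepoint. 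This is manageable but not trivial, and it is precisely what the paper's germ formulation is designed to sidestep. One further caution: ``rescaling towards the origin'' is dangerous phrasing — precomposition with $x\mapsto\lambda x$ is a diffeomorphism of $\mathbb{R}^{n_i}$ and leaves the image unchanged; what shrinks the image is precomposition with a \emph{non-surjective} isotopy onto a proper open ball.

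Finally, your fallback — a ``cofinal subposet of families that factor through small balls around each $p_i$ contained in $V_i$'' — filters by the target, which does not correspond to the colimit defining germs; that colimit runs over open neighborhoods of the origins in the \emph{domain} $N$, and it is exactly this domain filtration that the paper exploits.
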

\begin{proof}
The map in question factors through $G^\s_{p_I}(n_I,M)\coloneqq\colim_{U\subseteq N}\Emb_{p_I}^\s(U, M)$, where $U$ ranges over open subsets containing the origin in $\mathbb{R}^{n_i}$ for every $i\in I$. As in the previous argument, the subposet consisting of the disjoint unions of open balls around the respective origins is final, and both are filtered. Thus, since the inclusion of such an open set into $N$ is isotopic to a diffeomorphism, the first map is a weak equivalence. On the other hand, the map \[G^\s_{p_I}(n_I,M)\lra \prod_{i\in I} G^\s_{p_i}(n_i,M)\] is an isomorphism; indeed, injectivity is immediate, and surjectivity follows from the observation that any family of $I$-tuples of germs parametrized over a compact space (such as a simplex) can be represented by a family of $I$-tuples of embeddings whose images are pairwise disjoint at every point of the parameter space. 
\end{proof}

We write $\Conf_I(M) \coloneqq \{(p_i)_{i \in I} \mid p_i \neq p_j \text{ if $i \neq j$}\} \subseteq M^I$ for the \emph{configuration space} of particles in $M$ labeled by $I$.

\begin{proposition}\label{prop:smooth pullback}
	Let $M$ be a smooth manifold and $N=\sqcup_{i\in I} \mathbb{R}^{n_i}$. The diagram \[\begin{tikzcd}
	\Emb^\s(N,M)\ar{r}\ar{d}&\Map_{\VVec}(TN,TM)\ar{d}\\
	\Conf_I(M)\ar{r}&M^I
	\end{tikzcd}\] induced by evaluation at the respective origins is homotopy Cartesian.
\end{proposition}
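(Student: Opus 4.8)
The plan is to compare the two vertical maps fibrewise over $\Conf_I(M)$, thereby reducing the statement to the fibred refinements of the classical facts recorded in Lemmas~\ref{lem:smooth germ} and~\ref{lem:multiple germs}.

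First I would check that the right-hand map is a Kan fibration. Since $TN=\sqcup_{i\in I}T\mathbb{R}^{n_i}$, it is the product over $i\in I$ of the maps $\Map_{\VVec}(T\mathbb{R}^{n_i},TM)\to M$, and each of these factors as
\[\Map_{\VVec}(T\mathbb{R}^{n_i},TM)\lra\Map_{\TTop}(\mathbb{R}^{n_i},M)\lra M,\]
where the first map is a Kan fibration by Lemma~\ref{lem:bundle fibration} and the second (restriction along $\{0\}\hookrightarrow\mathbb{R}^{n_i}$) is a Kan fibration because $\{0\}$ is a deformation retract of $\mathbb{R}^{n_i}$. Hence the strict pullback $P\coloneqq\Conf_I(M)\times_{M^I}\Map_{\VVec}(TN,TM)$ computes the homotopy pullback, and it suffices to prove that the canonical map $\Emb^\s(N,M)\to P$ is a weak equivalence. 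Now the left-hand map $\Emb^\s(N,M)\to\Conf_I(M)$ is restriction to the compact submanifold $\{0_i\}_{i\in I}\subseteq N$ of origins, which is a Kan fibration by the parametrised isotopy extension theorem; its base change $P\to\Conf_I(M)$ is then a Kan fibration as well, so it is enough to show that for every $(p_i)_{i\in I}\in\Conf_I(M)$ the induced map on fibres is a weak equivalence.

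By construction the fibre of $\Emb^\s(N,M)\to\Conf_I(M)$ over $(p_i)_{i\in I}$ is $\Emb^\s_{p_I}(N,M)$, the fibre of $P\to\Conf_I(M)$ is $\prod_{i\in I}\Map_{\VVec,p_i}(T\mathbb{R}^{n_i},TM)$, and the induced map sends an embedding to its derivative. I would factor this map through the restriction map
\[r\colon\Emb^\s_{p_I}(N,M)\lra\prod_{i\in I}\Emb^\s_{p_i}(\mathbb{R}^{n_i},M),\qquad e\longmapsto(e|_{\mathbb{R}^{n_i}})_{i\in I}.\]
Postcomposing $r$ with the product of the derivative maps $\Emb^\s_{p_i}(\mathbb{R}^{n_i},M)\to\Map_{\VVec,p_i}(T\mathbb{R}^{n_i},TM)$ of Lemma~\ref{lem:smooth germ} recovers the map on fibres, while postcomposing $r$ with the product of the germ maps $\Emb^\s_{p_i}(\mathbb{R}^{n_i},M)\to G^\s_{p_i}(n_i,M)$ recovers the canonical map of Lemma~\ref{lem:multiple germs}. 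Since both of these products of maps out of $\prod_{i\in I}\Emb^\s_{p_i}(\mathbb{R}^{n_i},M)$ are weak equivalences by Lemma~\ref{lem:smooth germ}, and the second composite is a weak equivalence by Lemma~\ref{lem:multiple germs}, the two-out-of-three property forces $r$ to be a weak equivalence, and hence so is the map on fibres.

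The main obstacle is not geometric: given Lemmas~\ref{lem:smooth germ} and~\ref{lem:multiple germs}, the argument is essentially bookkeeping, the crux being to arrange that the relevant point-set fibres genuinely compute the homotopy fibres. On the target this is the Kan-fibration observation above; on the source it is the single external input needed, namely isotopy extension, which guarantees that $\Emb^\s(N,M)\to\Conf_I(M)$ is a fibration with point-set fibre $\Emb^\s_{p_I}(N,M)$.
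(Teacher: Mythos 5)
Your proposal is correct and follows essentially the same strategy as the paper: identify the vertical maps as fibrations (left via isotopy extension, right via Lemma~\ref{lem:bundle fibration}), then identify the induced map on point-set fibers and verify it is a weak equivalence using Lemmas~\ref{lem:smooth germ} and~\ref{lem:multiple germs}. The only cosmetic difference is that the paper splits the square in two by inserting $\prod_{i\in I}\Emb^\s(\mathbb{R}^{n_i},M)$ as a middle column and checks each inner square is homotopy Cartesian, whereas you work with the outer square directly and factor the map on fibers through that same intermediate space; the resulting calculation is identical. One small remark: the fact that $\Map_{\TTop}(\mathbb{R}^{n_i},M)\to\Map_{\TTop}(\{0\},M)$ is a Kan fibration is because $\{0\}\hookrightarrow\mathbb{R}^{n_i}$ is a cofibration, not because it is a deformation retract (the latter additionally gives that this map is a weak equivalence, which you do not need here).
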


\begin{proof}This square is the outer square in the commuting diagram
	\[\begin{tikzcd}
	\Emb^\s(N,M)\ar{d}\ar{r}&\prod_{i\in I}\Emb^\s(\mathbb{R}^{n_i},M)\ar{d}\ar{r}&\prod_{i\in I}\Map_{\VVec}(T\mathbb{R}^{n_i}, TM)\ar{d}\\
	\Conf_I(M)\ar{r}&M^I\ar[equals]{r}&M^I,
	\end{tikzcd}\] it suffices to verify that each of the inner squares is homotopy Cartesian. The left two vertical maps are fibrations by the isotopy extension theorem \cite[Chapter 6]{Wall:DT}, and the righthand vertical map is a product of fibrations, the $i$th map being the composite of the two fibrations \[\Map_{\VVec}(T\mathbb{R}^{n_i}, TM) \to \Map_{\TTop}(\mathbb{R}^{n_i},M) \to \Map_{\TTop}(\{0\},M).\] Thus, it suffices to establish that the induced maps on fibres are weak equivalences.
	
	For the righthand square, the map on fibres is a product of weak equivalences by Lemma \ref{lem:smooth germ}. For the lefthand square, the map on fibres is the top map in the commuting diagram	\[\begin{tikzcd} \Emb^\s_{p^I}(N,M) \rar \dar & \prod_{i\in I}\Emb^\s_{p_i}(\mathbb{R}^{n_i},M) \dar \\
	\prod_{i\in I}G_{p_i}^\s(n_i,M) \rar[equals] & \prod_{i\in I}G_{p_i}^\s(n_i,M), \end{tikzcd} \] and the vertical maps are weak equivalences by Lemmas \ref{lem:smooth germ} and \ref{lem:multiple germs}.

\end{proof}

\subsection{Topological embeddings of Euclidean spaces}\label{section:topological euclidean embeddings} We turn now to the topological versions of these facts, our goal being a description of locally flat embeddings of Euclidean spaces in terms of microbundle maps and configuration spaces. 

Taking $M$ instead to be merely a topological manifold, we define the simplicial sets $\Map_{\MMic,p}(T\mathbb{R}^n,TM)$, $\Map_{\MMic,p}(T_0\mathbb{R}^n,TM)$, $\Emb_p^\tp(U,M)$, and $G^\tp_p(n,M)$ by replacing smooth embeddings with locally flat embeddings and vector bundles with microbundles in the definitions of the previous section.

\begin{lemma}\label{lem:topological germ} All maps in the commuting diagram \[\begin{tikzcd} \Emb_p^\tp(\mathbb{R}^n,M) \dar \rar & G^\tp_p(n,M) \dar \\
	\Map_{\MMic,p}(T\mathbb{R}^n,TM) \rar & \Map_{\MMic,p}(T_0\mathbb{R}^n,TM)\end{tikzcd}\]
	are weak equivalences. In fact, the righthand vertical map is an isomorphism.
\end{lemma}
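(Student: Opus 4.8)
The plan is to mimic the proof of Lemma \ref{lem:smooth germ}, replacing each smooth input by its topological counterpart, and to take advantage of the fact that the topological situation is in one respect \emph{better}: since microbundles are defined via germs, the germ-comparison map will be a literal isomorphism rather than merely a weak equivalence. Concretely, there are four maps to treat, and I would handle them in essentially the same order as in the smooth case.

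First I would treat the righthand vertical map $G^\tp_p(n,M)\to\Map_{\MMic,p}(T_0\mathbb{R}^n,TM)$ and argue it is an \emph{isomorphism} of simplicial sets. The point is that an $n$-simplex of either side is intrinsically germ-theoretic: on the left it is a germ near $\Delta^n\times\{0\}$ of a locally flat embedding $\Delta^n\times U\to\Delta^n\times M$ over $\Delta^n$ sending the origin to (the constant family at) $p$, and on the right it is a germ near the zero section of a microbundle map $\Delta^n\times T\mathbb{R}^n\to T^\tp M$ covering the constant map to $p$, i.e.\ a germ near $\Delta^n\times\{0\}$ of a map $\Delta^n\times\mathbb{R}^n\to\{p\}\times M$ preserving the relevant zero sections and fiberwise an embedding in suitable charts. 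Unwinding Definitions \ref{def:tangent bundle} and \ref{def:derivative}, the topological derivative of a locally flat embedding germ at $p$ \emph{is} exactly such a microbundle map germ, and conversely any microbundle map germ over the constant map $p$, read through the chart $M\times M\to M\times M$, $(x,y)\mapsto(x,y-x)$ at $p$, is the derivative of its own underlying embedding germ. Checking that these two assignments are mutually inverse and simplicial is the bookkeeping heart of this step.

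Next, the top map $\Emb_p^\tp(\mathbb{R}^n,M)\to G^\tp_p(n,M)$ is a weak equivalence by the same argument as in the smooth case: restriction $\Emb_p^\tp(\mathbb{R}^n,M)\to\Emb_p^\tp(U,M)$ is a weak equivalence whenever $U$ is an open ball about the origin, because such $U$ is, relative to the origin, ambient isotopic within $\mathbb{R}^n$ to all of $\mathbb{R}^n$ (a linear rescaling germ works, and is certainly locally flat), and the open balls about the origin form a final subposet of all neighborhoods of the origin, with both posets filtered so that the colimit defining $G^\tp_p(n,M)$ is computed along this cofinal system. The bottom map $\Map_{\MMic,p}(T\mathbb{R}^n,TM)\to\Map_{\MMic,p}(T_0\mathbb{R}^n,TM)$ is a weak equivalence by the contractibility of $\mathbb{R}^n$ together with the microbundle homotopy covering theorem (\cite[Thm.~3.1]{Milnor:MBI}), exactly paralleling the use of the vector bundle covering theorem in Lemma \ref{lem:smooth germ}; concretely one identifies the map, over the contractible base $\mathbb{R}^n$, with a restriction of sections of a microbundle which is homotopy-trivial over $[0,1]$-directions, so the restriction is a weak equivalence. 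Finally the lefthand map $\Emb_p^\tp(\mathbb{R}^n,M)\to\Map_{\MMic,p}(T\mathbb{R}^n,TM)$ is a weak equivalence by two-out-of-three from the other three.

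The main obstacle I anticipate is the righthand map: making precise, at the level of simplicial sets and naturally in the simplicial direction, that a germ of a locally flat embedding near a point and a germ of a microbundle map near the zero section carry literally the same data. One must be careful that ``locally flat'' is exactly the condition that cuts out the image of the derivative construction—this is where local flatness, rather than arbitrary topological embeddings, is essential—and that the germ colimit on the embedding side matches the built-in germ nature of microbundle maps. Everything else is a direct transcription of Lemma \ref{lem:smooth germ} with $\VVec$ replaced by $\MMic$ and the vector-bundle covering theorem replaced by Milnor's microbundle homotopy covering theorem.
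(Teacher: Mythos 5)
Your proposal is correct and follows essentially the same approach as the paper: the paper also observes that the righthand vertical map is an isomorphism ``upon inspecting the definitions'' (which you spell out, correctly identifying that a microbundle map into $T^\tp M$ over the constant map at $p$ is literally a germ of a locally flat embedding of $(\mathbb{R}^n,0)$ into $(M,p)$), and treats the remaining three maps by transposing the arguments of Lemma \ref{lem:smooth germ}, with the microbundle homotopy covering theorem replacing the vector bundle covering theorem for the bottom map and two-out-of-three for the lefthand map.
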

\begin{proof}
The claim regarding the righthand map follows upon inspecting the definitions, and the same argument as in Lemma \ref{lem:smooth germ} suffices for the remaining three. 
\end{proof}

As in the smooth case, extending our notation in the obvious way, we have the following generalization.

\begin{lemma}\label{lem:topological multiple germs}
	The canonical map $\Emb_{p_I}^\tp(N,M)\to \prod_{i\in I}G_{p_i}^\tp(n_i,M)$ is a weak equivalence.
\end{lemma}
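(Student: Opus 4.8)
The plan is to imitate the proof of Lemma \ref{lem:multiple germs} essentially verbatim, replacing smooth embeddings by locally flat ones throughout. First I would introduce the intermediate simplicial set $G^\tp_{p_I}(n_I,M)\coloneqq\colim_{U\subseteq N}\Emb^\tp_{p_I}(U,M)$, where $U$ ranges over open neighborhoods of the collection of origins in $N=\sqcup_{i\in I}\mathbb{R}^{n_i}$, and factor the map in question as $\Emb^\tp_{p_I}(N,M)\to G^\tp_{p_I}(n_I,M)\to \prod_{i\in I}G^\tp_{p_i}(n_i,M)$.

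For the first map, I would observe that the subposet of disjoint unions of open balls around the respective origins is final and filtered, just as in the smooth case, and that the inclusion of such an open set into $N$ is isotopic to a homeomorphism relative to the origins (for a ball in $\mathbb{R}^{n_i}$ this is the standard radial isotopy, which is locally flat since it is in fact smooth). Hence restriction along these inclusions induces a weak equivalence. For the second map, I would argue exactly as in Lemma \ref{lem:multiple germs} that it is an isomorphism: injectivity is immediate, and surjectivity follows because any compactly-parametrized family of $I$-tuples of germs can be represented by a family of $I$-tuples of locally flat embeddings with pairwise disjoint images, by shrinking domains. Composing, the map $\Emb^\tp_{p_I}(N,M)\to\prod_{i\in I}G^\tp_{p_i}(n_i,M)$ is a weak equivalence.

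There is essentially no new obstacle here; the only point requiring a small amount of care is that the isotopies and shrinking arguments used in the smooth proof all make sense, and remain valid, in the locally flat category — but these are elementary manipulations (radial rescaling, restriction to smaller neighborhoods) that never disturb local flatness. If one wanted to be maximally economical, one could simply write that the proof of Lemma \ref{lem:multiple germs} applies mutatis mutandis, using Lemma \ref{lem:topological germ} in place of Lemma \ref{lem:smooth germ}; I would include the one-line justifications above so the reader can verify nothing smooth-specific was used.

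\begin{proof}
The map in question factors through $G^\tp_{p_I}(n_I,M)\coloneqq\colim_{U\subseteq N}\Emb_{p_I}^\tp(U, M)$, where $U$ ranges over open subsets containing the origin in $\mathbb{R}^{n_i}$ for every $i\in I$. As in the proof of Lemma \ref{lem:multiple germs}, the subposet consisting of disjoint unions of open balls around the respective origins is final, and both posets are filtered; since the inclusion of such an open set into $N$ is isotopic to a homeomorphism through locally flat embeddings relative to the origins, the first map $\Emb^\tp_{p_I}(N,M)\to G^\tp_{p_I}(n_I,M)$ is a weak equivalence. On the other hand, the map \[G^\tp_{p_I}(n_I,M)\lra \prod_{i\in I} G^\tp_{p_i}(n_i,M)\] is an isomorphism: injectivity is immediate, and surjectivity follows from the observation that any family of $I$-tuples of germs parametrized over a compact space can be represented by a family of $I$-tuples of locally flat embeddings whose images are pairwise disjoint at every point of the parameter space. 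Composing these two maps gives the claim.
\end{proof}
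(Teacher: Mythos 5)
Your proposal is correct and takes exactly the approach the paper intends: the paper omits the proof of this lemma, stating only that it follows ``as in the smooth case,'' i.e.\ by running the argument of Lemma~\ref{lem:multiple germs} with locally flat embeddings in place of smooth ones, which is precisely what you have done.
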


Given these inputs and isotopy extension for locally flat embeddings \cite{EdwardsKirby:DSI} (see \cite[Theorem 6.17]{Siebenmann:DOHOSS} for the variant with parameters), the topological analogue of Proposition \ref{prop:smooth pullback} follows by the same argument.

\begin{proposition}\label{prop:topological pullback}
	Let $M$ be a topological manifold and $N=\sqcup_{i\in I} \mathbb{R}^{n_i}$. The diagram \[\begin{tikzcd}
	\Emb^\tp(N,M)\ar{r}\ar{d}&\Map_{\MMic}(T^\tp N,T^\tp M)\ar{d}\\
	\Conf_I(M)\ar{r}&M^I
	\end{tikzcd}\] induced by evaluation at the respective origins is homotopy Cartesian.
\end{proposition}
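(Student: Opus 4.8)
The plan is to transcribe the proof of Proposition~\ref{prop:smooth pullback}, replacing smooth embeddings by locally flat embeddings, vector bundles by microbundles, the smooth isotopy extension theorem by the locally flat isotopy extension theorem of Edwards--Kirby \cite{EdwardsKirby:DSI}, and Lemmas~\ref{lem:smooth germ}--\ref{lem:multiple germs} by Lemmas~\ref{lem:topological germ}--\ref{lem:topological multiple germs}. Since $T^\tp N = \bigsqcup_{i\in I} T^\tp\mathbb{R}^{n_i}$ as a microbundle, a microbundle map out of $T^\tp N$ is an $I$-tuple of microbundle maps $T^\tp\mathbb{R}^{n_i}\to T^\tp M$, so the square in question is the outer rectangle of the commuting diagram
\[\begin{tikzcd}
\Emb^\tp(N,M)\ar{d}\ar{r}&\prod_{i\in I}\Emb^\tp(\mathbb{R}^{n_i},M)\ar{d}\ar{r}&\prod_{i\in I}\Map_{\MMic}(T^\tp\mathbb{R}^{n_i}, T^\tp M)\ar{d}\\
\Conf_I(M)\ar{r}&M^I\ar[equals]{r}&M^I,
\end{tikzcd}\]
and it suffices to prove that each of the two inner squares is homotopy Cartesian.

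First I would arrange that all three vertical maps are fibrations. The left two, $\Emb^\tp(N,M)\to\Conf_I(M)$ and $\prod_{i\in I}\Emb^\tp(\mathbb{R}^{n_i},M)\to M^I$, are fibrations by the Edwards--Kirby locally flat isotopy extension theorem \cite{EdwardsKirby:DSI}. The righthand vertical is a product over $i\in I$ of the composites
\[\Map_{\MMic}(T^\tp\mathbb{R}^{n_i}, T^\tp M) \lra \Map_{\TTop}(\mathbb{R}^{n_i},M) \lra \Map_{\TTop}(\{0\},M) = M,\]
each factor a Kan fibration by Lemma~\ref{lem:microbundle fibration} together with restriction along $\{0\}\hookrightarrow\mathbb{R}^{n_i}$. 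It then remains to check that the induced maps on point-set fibers over a configuration $(p_i)_{i\in I}$ are weak equivalences.

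For the righthand inner square, Lemma~\ref{lem:microbundle fibration} identifies the fiber of the righthand vertical with $\prod_{i\in I}\Map_{\MMic,p_i}(T^\tp\mathbb{R}^{n_i},T^\tp M)$, and the map on fibers is then the product of the lefthand vertical maps $\Emb^\tp_{p_i}(\mathbb{R}^{n_i},M)\to\Map_{\MMic,p_i}(T^\tp\mathbb{R}^{n_i},T^\tp M)$ of Lemma~\ref{lem:topological germ}, hence a product of weak equivalences. For the lefthand inner square, the map on fibers is the top horizontal map in the commuting square
\[\begin{tikzcd} \Emb^\tp_{p_I}(N,M) \rar \dar & \prod_{i\in I}\Emb^\tp_{p_i}(\mathbb{R}^{n_i},M) \dar \\
\prod_{i\in I}G_{p_i}^\tp(n_i,M) \rar[equals] & \prod_{i\in I}G_{p_i}^\tp(n_i,M), \end{tikzcd}\]
whose vertical maps are weak equivalences by Lemmas~\ref{lem:topological germ} and~\ref{lem:topological multiple germs}; two-out-of-three completes the argument.

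The only substantive inputs---the Edwards--Kirby locally flat isotopy extension theorem (which is exactly why $\MMfld^\tp$ was defined using locally flat embeddings), Milnor's microbundle homotopy covering theorem underlying Lemma~\ref{lem:microbundle fibration}, and the germ comparisons of Lemmas~\ref{lem:topological germ} and~\ref{lem:topological multiple germs}---are already in hand, so I do not expect a genuine obstacle. The one point requiring care, as with every germ-level statement in this section, is to carry out the identifications of fibers and the reductions to the case $M=\mathbb{R}^m$ compatibly over parameter simplices, which is handled exactly as in the proofs of Proposition~\ref{prop:smooth pullback} and Lemma~\ref{lem:smooth germ}.
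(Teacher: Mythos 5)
Your proposal follows the paper's own proof exactly: the paper states that, given Lemmas~\ref{lem:topological germ} and~\ref{lem:topological multiple germs} together with the Edwards--Kirby isotopy extension theorem, Proposition~\ref{prop:topological pullback} "follows by the same argument" as Proposition~\ref{prop:smooth pullback}, and your write-out of that argument (decomposing into the two inner squares, checking the verticals are fibrations, and comparing point-set fibers via the germ lemmas) is precisely what is intended. No gap.
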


\subsection{Formally smooth embeddings of Euclidean spaces} The key calculation in the proof of Theorem \ref{thm:independence} is a comparison between Riemannian embeddings and formally smooth embeddings. We start with a lemma concerning Riemannian embeddings.

\begin{lemma}\label{lem:riemannian pullback}
	Let $M$ be a Riemannian manifold and $N=\sqcup_{i\in I} \mathbb{R}^{n_i}$. The diagram \[\begin{tikzcd}
	\Emb^\r(N,M)\ar{r}\ar{d}&\Map_{\MMet}(TN,TM)\ar{d}\\
	\Conf_I(M)\ar{r}&M^I
	\end{tikzcd}\] induced by evaluation at the respective origins is homotopy Cartesian.
\end{lemma}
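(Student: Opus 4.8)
The strategy is to reduce this to the smooth statement of Proposition \ref{prop:smooth pullback} via the forgetful functor $\MMfld^\r \to \MMfld^\s$ and the forgetful functor $\MMet \to \VVec$, both of which are known (Proposition \ref{prop:smooth and riemannian}, Proposition \ref{cor:bundles are metric}) to induce weak equivalences on mapping spaces. First I would observe that the square in question maps to the corresponding square for $\MMfld^\s$:
\[\begin{tikzcd}
\Emb^\r(N,M)\ar{r}\ar{d}&\Map_{\MMet}(TN,TM)\ar{d}\\
\Conf_I(M)\ar{r}&M^I,
\end{tikzcd}\qquad
\begin{tikzcd}
\Emb^\s(N,M)\ar{r}\ar{d}&\Map_{\VVec}(TN,TM)\ar{d}\\
\Conf_I(M)\ar{r}&M^I,
\end{tikzcd}\]
where here $M$ denotes a Riemannian manifold on the left and its underlying smooth manifold on the right, and $N=\sqcup_{i\in I}\mathbb{R}^{n_i}$ is given the standard flat metric. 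The bottom rows agree, and the two vertical maps between the top entries ($\Emb^\r\to\Emb^\s$ and $\Map_{\MMet}\to\Map_{\VVec}$) are weak equivalences. Since the smooth square is homotopy Cartesian by Proposition \ref{prop:smooth pullback}, a standard comparison argument then forces the Riemannian square to be homotopy Cartesian as well.

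To make that last step precise, I would compute the homotopy fibers of both squares over a point $(p_i)_{i\in I}\in \Conf_I(M)$ and compare them. Following the proof of Proposition \ref{prop:smooth pullback}, one factors the vertical maps through the inner squares
\[\begin{tikzcd}
\Emb^\r(N,M)\ar{d}\ar{r}&\prod_{i\in I}\Emb^\r(\mathbb{R}^{n_i},M)\ar{d}\ar{r}&\prod_{i\in I}\Map_{\MMet}(T\mathbb{R}^{n_i},TM)\ar{d}\\
\Conf_I(M)\ar{r}&M^I\ar[equals]{r}&M^I,
\end{tikzcd}\]
and checks each inner square is homotopy Cartesian. The rightmost vertical map is a product of fibrations by Lemma \ref{lem:bundle fibration met} (applied to $T\mathbb{R}^{n_i}\to \mathbb{R}^{n_i}$ and its projection to $\{0\}$), and the middle vertical map is a product of fibrations by the Riemannian isotopy extension theorem; here I would note that isotopy extension in $\MMfld^\r$ follows from isotopy extension in $\MMfld^\s$ together with the fibration property from Lemma \ref{lem:bundle fibration met}, or alternatively observe that one only needs the relevant maps to be homotopy Cartesian, which can be deduced directly from the smooth case via Proposition \ref{prop:homotopy pullback}. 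One then needs the analogues of Lemma \ref{lem:smooth germ} and Lemma \ref{lem:multiple germs} in the Riemannian setting: the map on fibers over the rightmost square is $\Emb^\r_{p_I}(N,M)\to \prod_i\Map_{\MMet,p_i}(T_0\mathbb{R}^{n_i},TM)$, which factors through germs, and the relevant weak equivalences follow from the smooth versions because $\Emb^\r\to\Emb^\s$ and $\MMet\to\VVec$ are equivalences on mapping spaces, the metric on $T_p M$ being homotopy-inessential (the space of metrics is convex, hence contractible).

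The main obstacle, and the only place requiring genuine care, is bookkeeping the definition of $\MMfld^\r$ as a homotopy pullback: a morphism in $\Emb^\r(N,M)$ is not literally a metric-respecting embedding but an embedding together with a path of linear injections from the (rescaled) derivative to an isometry, so ``evaluation at the origin'' and the comparison maps to $\Emb^\s$ and $\Map_{\MMet}$ must be unwound through Construction \ref{construction:functor} and Proposition \ref{prop:homotopy pullback}. Once one is comfortable that the homotopy pullback description is compatible with all these evaluation maps — which is exactly the content of the appendix material on Andrade-style homotopy pullbacks of simplicial categories — the result is a formal consequence of Proposition \ref{prop:smooth pullback} and the two equivalences above. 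I expect the cleanest writeup simply invokes Proposition \ref{prop:homotopy pullback} to replace each corner of the Riemannian square by its smooth counterpart up to weak equivalence and then cites Proposition \ref{prop:smooth pullback}.
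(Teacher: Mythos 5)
Your first paragraph is essentially the paper's proof: it compares the Riemannian square to the smooth square of Proposition \ref{prop:smooth pullback}, noting that $\Emb^\r(N,M)\to\Emb^\s(N,M)$ and $\Map_{\MMet}(TN,TM)\to\Map_{\VVec}(TN,TM)$ are weak equivalences (Propositions \ref{prop:smooth and riemannian} and \ref{cor:bundles are metric}), so the two squares have the same homotopy type and homotopy Cartesianness transfers; the paper packages this as a vertical pasting of the two squares. The detailed unwinding in your later paragraphs --- re-running the three-column argument, invoking a ``Riemannian isotopy extension theorem,'' and proving Riemannian analogues of Lemmas \ref{lem:smooth germ} and \ref{lem:multiple germs} --- is unnecessary once the formal comparison is in place, and the paper stops at the pasting step.
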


\begin{proof}
	The upper square of the commuting diagram
	\[\begin{tikzcd}
	\Emb^\r(N,M)\ar{r}\ar{d}&\Map_{\MMet}(TN, TM)\ar{d}\\
	\Emb^\s(N,M)\ar{r}\ar{dd}&\Map_{\VVec}(TN,TM)\ar{d}\\
	&\Map_{\TTop}(N,M)\ar[d]\\
	\Conf_I(M)\ar{r}&M^I
	\end{tikzcd}\] is homotopy Cartesian by Propositions \ref{cor:bundles are metric} and \ref{prop:smooth and riemannian} (they imply that right, resp.~left, vertical map is a weak equivalence), and the bottom square is homotopy Cartesian by Proposition \ref{prop:smooth pullback}.
\end{proof}

We come now to the result of interest.

\begin{proposition}\label{prop:euclidean embeddings}
Let $M$ be a Riemannian manifold and $N=\sqcup_{i\in I}\mathbb{R}^{n_i}$. Then the canonical map\[\Emb^\r(N,M) \lra \Emb^\f(N,M)\] is a weak equivalence.\end{proposition}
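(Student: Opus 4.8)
The plan is to compare the two homotopy-Cartesian squares we already have. By Lemma~\ref{lem:riemannian pullback}, $\Emb^\r(N,M)$ sits in a homotopy pullback over $M^I$ with corner $\Map_{\MMet}(TN,TM)$; by Proposition~\ref{prop:topological pullback} (applied to the underlying topological manifold of $M$), $\Emb^\tp(N,M)$ sits in a homotopy pullback over $M^I$ with corner $\Map_{\MMic}(T^\tp N, T^\tp M)$. I would first produce the analogous homotopy-Cartesian square computing $\Emb^\f(N,M)$: since $\MMfld^\f$ is defined as the homotopy pullback of $\MMfld^\tp \to \MMic \leftarrow \VVec$ over $\TTop$, a morphism in $\MMfld^\f$ out of $N = \sqcup_i \mathbb{R}^{n_i}$ (with its standard formally smooth structure, whose underlying vector bundle refinement of $T^\tp N$ is just $TN$) unwinds to a locally flat embedding $N \hookrightarrow M$, a vector-bundle map $TN \to TM_f$ (where $TM_f$ is the chosen refinement of $T^\tp M$, which for $M$ Riemannian coming from $\MMfld^\r$ is $TM$ itself), together with a path in microbundle maps from the image of that vector-bundle map to the topological derivative. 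Tracking the evaluation-at-origins maps, this exhibits $\Emb^\f(N,M)$ as the homotopy pullback of
\[
\Emb^\tp(N,M) \lra \Map_{\MMic}(T^\tp N, T^\tp M) \longleftarrow \Map_{\VVec}(TN, TM),
\]
where the right-hand map is post-composition with the canonical microbundle map $TM \to T^\tp M$ of Proposition~\ref{thm:metric micro iso} (and similarly for $N$). Combining with Proposition~\ref{prop:topological pullback}, $\Emb^\f(N,M)$ is then the homotopy pullback over $M^I$ of $\Conf_I(M)$ and $\Map_{\VVec}(TN,TM)$.

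With both $\Emb^\r(N,M)$ and $\Emb^\f(N,M)$ identified as homotopy pullbacks over $M^I$ of $\Conf_I(M) \to M^I$ against, respectively, $\Map_{\MMet}(TN,TM) \to M^I$ and $\Map_{\VVec}(TN,TM) \to M^I$, it suffices to check that the canonical comparison map fits into a commuting square
\[
\begin{tikzcd}
\Map_{\MMet}(TN,TM) \rar \dar & \Map_{\VVec}(TN,TM) \dar \\
M^I \rar[equals] & M^I
\end{tikzcd}
\]
in which the top map is a weak equivalence — this is precisely Proposition~\ref{cor:bundles are metric} (the forgetful functor $\MMet \to \VVec$ induces weak equivalences on mapping spaces), compatibly with evaluation at origins. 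Then the induced map on homotopy pullbacks is a weak equivalence, which is the claim. One must of course also check that the comparison map $\Emb^\r(N,M) \to \Emb^\f(N,M)$ constructed from the functor of Construction~\ref{construction:example functor} really does intertwine these two presentations; this is a matter of unwinding the definitions of the homotopy pullbacks and the natural isomorphism $\exp$ of Construction~\ref{construction:metric micro map}, and is formal once the presentations are in place.

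The main obstacle is the first step: carefully extracting the homotopy-pullback presentation of $\Emb^\f(N,M)$ from the model of homotopy pullbacks of simplicial categories (Appendix~\ref{section:construction}) and verifying that, for $N$ a disjoint union of Euclidean spaces and $M$ Riemannian, the relevant mapping spaces in $\MMfld^\f$ and $\MMfld^\r$ have the stated descriptions with matching evaluation maps. In particular one needs that the standard formally smooth structure on $\mathbb{R}^{n_i}$ and the Riemannian-induced formally smooth structure on $M$ are the images, under Construction~\ref{construction:example functor}, of the evident objects of $\MMfld^\r$, so that the morphism spaces between them in $\MMfld^\f$ are computed by the expected homotopy fiber product; once this bookkeeping is done, everything else reduces to the already-established pullback squares and Proposition~\ref{cor:bundles are metric}.
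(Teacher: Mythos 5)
Your proposal is correct and is essentially the paper's own argument, just reorganized: the paper assembles the same ingredients (Proposition~\ref{prop:homotopy pullback} applied to the defining square of $\MMfld^\f$ to get the square relating $\Emb^\f$, $\Emb^\tp$, and the bundle mapping spaces; Proposition~\ref{prop:topological pullback}; Lemma~\ref{lem:riemannian pullback}; Proposition~\ref{cor:bundles are metric}) into a $2\times 3$ diagram and compares vertical homotopy fibers, whereas you explicitly assemble pullback presentations of $\Emb^\r(N,M)$ and $\Emb^\f(N,M)$ over $M^I$ before comparing them. The bookkeeping you flag at the end is exactly the content of the paper's appeal to Proposition~\ref{prop:homotopy pullback} for the right-hand square, which is already proved there, so no genuine gap remains.
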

\begin{proof}Consider the following diagram:
	\[\begin{tikzcd} \Emb^\r(N,M)\rar \dar &\Emb^\f(N,M)\rar \dar &\Emb^\tp(N,M)\dar \\
	\Map_{\MMet}(TN,TM)\rar &\Map_{\VVec}(TN,TM)\rar &\Map_{\MMic}(T M, T N) .\end{tikzcd}\]
	The right square commutes, but the left square commutes only up to specified homotopy.
	
	The maps from $\Map_{\VVec}(TN, TM)$ and $\Map_{\MMic}(TN, TM)$ to $\Map_{\TTop}(N,M)$ are fibrations by Lemma \ref{lem:bundle fibration}, so Proposition \ref{prop:homotopy pullback} grants that the righthand square is homotopy Cartesian. Therefore, since the lower lefthand map is a weak equivalence by Corollary \ref{cor:bundles are metric}, it suffices to show that the outer diagram is also homotopy Cartesian. By Proposition \ref{prop:topological pullback} and Lemma \ref{lem:riemannian pullback}, the vertical homotopy fibres in the outer diagram are compatibly identified with the homotopy fibre of the inclusion $\Conf_I(M)\subseteq M^I$, and the claim follows.\end{proof}

\subsection{Consequences for embedding calculus} 

In order to state the main result, we extend our notation in the obvious way by writing $\DDisk_n^\f$ and $\DDisk_n^r$ for the full subcategories on disjoint unions of finitely many copies of $\mathbb{R}^n$ in the appropriate categories of manifolds, and similarly for derived mapping spaces of simplicial presheaves on these categories.

\begin{theorem}\label{thm:independence}
Given Riemannian metrics on smooth manifolds $M$ and $N$, there is a canonical weak equivalence \[T_\bullet\Emb^\s(N,M)\simeq \Map^h_{\PSh(\DDisk^\f_n)}(\mathbb{E}^\f_N,\mathbb{E}_M^\f).\] In particular, the embedding calculus Taylor tower depends only on $M$ and $N$ as formally smooth manifolds.
\end{theorem}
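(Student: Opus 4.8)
The plan is to compare the embedding calculus Taylor towers computed in the three worlds $\MMfld^\s$, $\MMfld^\r$, and $\MMfld^\f$ via the zig-zag of simplicial functors
$\MMfld^\r \to \MMfld^\s$ and $\MMfld^\r \to \MMfld^\f$ from the diagram at the start of the section. Embedding calculus, as set up in Section \ref{sec:emb calc defs}, is a purely formal construction: given a symmetric monoidal simplicial category acting on a simplicial category of ``manifold-like'' objects, one extracts an endomorphism operad $\mathbb{E}_n$ of $\mathbb{R}^n$, an associated right module for each object, and the derived mapping space of modules. A symmetric monoidal simplicial functor that preserves the relevant objects therefore induces a comparison of operads, of modules, and hence of derived module mapping spaces. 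So the first step is to observe that both functors $\MMfld^\r \to \MMfld^\s$ and $\MMfld^\r \to \MMfld^\f$ are symmetric monoidal (for disjoint union) and send $\mathbb{R}^n$ to $\mathbb{R}^n$, yielding canonical maps
\[
\Map^h_{\mathbb{E}_n^\r}(\mathbb{E}^\r_N,\mathbb{E}^\r_M)\lra T_\bullet\Emb^\s(N,M),
\qquad
\Map^h_{\mathbb{E}_n^\r}(\mathbb{E}^\r_N,\mathbb{E}^\r_M)\lra \Map^h_{\mathbb{E}_n^\f}(\mathbb{E}^\f_N,\mathbb{E}^\f_M),
\]
compatibly with the cofiltrations by arity, hence on the level of the whole towers.

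The second step is to show both maps are weak equivalences. For this it suffices to show that in each case the map of operads $\mathbb{E}_n^\r \to \mathbb{E}_n^\s$ (resp.\ $\mathbb{E}_n^\r \to \mathbb{E}_n^\f$) is an arity-wise weak equivalence, and likewise for the modules $\mathbb{E}_N^\r \to \mathbb{E}_N^\s$ and $\mathbb{E}_M^\r \to \mathbb{E}_M^\f$ in each arity $k$. Indeed, an arity-wise weak equivalence of operads together with arity-wise weak equivalences of modules induces a weak equivalence on derived mapping spaces of modules and on each truncation $T_k$, by standard homotopical algebra of operadic modules (the derived mapping space is built from the arity spaces by homotopy limits). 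Now $\mathbb{E}_n^\r(k) = \Emb^\r(\sqcup_k\mathbb{R}^n, \mathbb{R}^n)$, $\mathbb{E}_N^\r(k) = \Emb^\r(\sqcup_k\mathbb{R}^n, N)$, and similarly for $\s$ and $\f$. The equivalence $\Emb^\r(\sqcup_k\mathbb{R}^n,P) \simeq \Emb^\s(\sqcup_k\mathbb{R}^n,P)$ for any smooth $P$ is Proposition \ref{prop:smooth and riemannian}, and the equivalence $\Emb^\r(\sqcup_k\mathbb{R}^n,P)\simeq\Emb^\f(\sqcup_k\mathbb{R}^n,P)$ is exactly Proposition \ref{prop:euclidean embeddings} (this is the reason that proposition was proved in the stated generality with $N$ a disjoint union of Euclidean spaces). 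Chaining the two zig-zag legs gives the desired canonical weak equivalence $T_\bullet\Emb^\s(N,M)\simeq \Map^h_{\mathbb{E}_n^\f}(\mathbb{E}^\f_N,\mathbb{E}^\f_M)$.

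The step I expect to require the most care is the first: making precise that embedding calculus is functorial for symmetric monoidal simplicial functors of the ambient manifold categories, in a way compatible with the arity cofiltration, so that an arity-wise equivalence of operads-with-modules really does induce an equivalence of towers. The subtlety is that the construction of $\MMfld^\r$ and $\MMfld^\f$ as homotopy pullbacks (via the model of Appendix \ref{section:construction}) must be checked to interact correctly with disjoint union and with the formation of endomorphism operads; one wants the module $\mathbb{E}_P^\r$ to genuinely be the one represented by the presheaf $\Emb^\r(-,P)$ restricted to disjoint unions of disks, and the comparison functors to respect this. Once the functoriality is pinned down — invoking the discussion of models for the derived mapping space referenced after Definition \ref{def:limit of tower}, e.g.\ \cite[Section 3.3.1]{KupersRandalWilliams:CTSA} — the rest is the homotopy-invariance of derived operadic module mapping spaces under arity-wise equivalences, which is routine, plus the two input propositions already established. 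A final remark: one should note the equivalence is canonical because it comes from an actual zig-zag of functors, not merely an abstract comparison, and it is automatically compatible with composition and with the maps $\eta_k$ by the same functoriality.
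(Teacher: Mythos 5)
Your proposal is correct and takes essentially the same approach as the paper: both arguments pass through the intermediate Riemannian world, reduce to the arity-wise weak equivalences of operads and modules supplied by Propositions \ref{prop:smooth and riemannian} and \ref{prop:euclidean embeddings}, and conclude by homotopy invariance of operadic module mapping spaces. The paper packages the comparison as the single functor $\Phi = \mathbb{L}g_! f^*$ (restriction along $\mathbb{E}_n^\r \to \mathbb{E}_n^\s$ followed by derived induction along $\mathbb{E}_n^\r \to \mathbb{E}_n^\f$) and cites Fresse for the fact that a weak equivalence of operads gives a Quillen equivalence of module categories, which is precisely the "standard homotopical algebra" step you flag as needing care.
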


\begin{remark}The choice of Riemannian metric on $M$ and $N$ is irrelevant; the space of Riemannian metrics on a smooth manifold is contractible and our constructions are continuous in the Riemannian metric in the sense that a path of Riemannian metrics gives rise to a homotopy of zigzags of maps between the left and right hand side.\end{remark}

\begin{remark}
Similar methods serve to establish a version of Theorem \ref{thm:independence} for manifolds $M$ and $N$ with common boundary $Z$.
\end{remark}

The theorem is an immediate consequence of the following result, which will follow easily from Proposition \ref{prop:euclidean embeddings}. Write $f \colon \DDisk_n^\r\to \DDisk_n^\s$ and $g \colon \DDisk_n^r \to \DDisk_n^\f$ for the respective forgetful functors, and write $\Phi\coloneqq\mathbb{L}g_!f^*$ for the composite of the (automatically derived) restriction and derived induction functors pertaining to these maps (a concrete model for the latter is available via a functorial cofibrant replacement, for example).

\begin{proposition}
Fix $n\geq0$.
\begin{enumerate}
\item The functor $\Phi \colon \PSh(\DDisk_n^\s)\to \PSh(\DDisk_n^\f)$ is essentially surjective up to weak equivalence and induces weak equivalences on derived mapping spaces.
\item For any Riemannian manifold $M$, there is a canonical weak equivalence $\Phi(\mathbb{E}_M^\s)\simeq \mathbb{E}_M^\f$.
\end{enumerate}
\end{proposition}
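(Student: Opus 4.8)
The plan is to deduce both parts of the proposition from the comparison results already assembled, using the standard yoga of Quillen adjunctions and the operadic model structure on right modules. For part (1), the key point is that the operad map $f\colon \mathbb{E}_n^\r\to\mathbb{E}_n^\s$ is a weak equivalence of operads, so $f^*$ is a Quillen equivalence; indeed, by Proposition \ref{prop:smooth and riemannian} the forgetful functor $\MMfld^\r\to\MMfld^\s$ induces weak equivalences on mapping spaces, and in arity $k$ the operad $\mathbb{E}_n^\r(k)$ is $\Map_{\MMfld^\r}(\sqcup_k\mathbb{R}^n,\mathbb{R}^n)$ mapping by a weak equivalence to $\mathbb{E}_n^\s(k)$. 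Thus it suffices to show that $g\colon \mathbb{E}_n^\r\to\mathbb{E}_n^\f$ is also a weak equivalence of operads: in arity $k$ this is exactly the content of Proposition \ref{prop:euclidean embeddings} applied to $N=\sqcup_k\mathbb{R}^n$ and $M=\mathbb{R}^n$ (with its standard metric). Granting this, both $f^*$ and $g^*$ are Quillen equivalences, hence so is the composite, and $\Phi=\mathbb{L}g_!f^*$ is its derived left adjoint; a Quillen equivalence is in particular essentially surjective up to weak equivalence and induces weak equivalences on derived mapping spaces, which is the claim. I should be slightly careful here about whether the relevant model structure on right modules over a non-$\Sigma$-cofibrant operad exists with the needed properties; the cleanest route is to invoke the projective model structure on $\MMod_{\mathbb{E}_n}$ built levelwise (it always exists, since $\sSet$ is cofibrantly generated), for which a levelwise weak equivalence of operads induces a Quillen equivalence on module categories via restriction—this is the operadic analogue of the statement that a weak equivalence of (simplicial) categories induces a Quillen equivalence on presheaf categories.

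For part (2), the assertion is that $\Phi$ carries the module $\mathbb{E}_M^\s$ determined by a Riemannian manifold $M$ to the module $\mathbb{E}_M^\f$ determined by its underlying formally smooth manifold. Unwinding definitions, $f^*(\mathbb{E}_M^\s)$ is the right $\mathbb{E}_n^\r$-module which in arity $k$ is $\Emb^\s(\sqcup_k\mathbb{R}^n,M)$ with the $\mathbb{E}_n^\r$-action through $f$; this is weakly equivalent, as an $\mathbb{E}_n^\r$-module, to the ``Riemannian'' module $\mathbb{E}_M^\r$ with arity-$k$ term $\Emb^\r(\sqcup_k\mathbb{R}^n,M)$, because Proposition \ref{prop:smooth and riemannian} gives arity-wise weak equivalences $\Emb^\r(\sqcup_k\mathbb{R}^n,M)\to\Emb^\s(\sqcup_k\mathbb{R}^n,M)$ compatible with the module structures. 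On the other hand, $g_!(\mathbb{E}_M^\r)$ should be computed; but the canonical map $\Emb^\r(\sqcup_k\mathbb{R}^n,M)\to\Emb^\f(\sqcup_k\mathbb{R}^n,M)$ of Proposition \ref{prop:euclidean embeddings} is an arity-wise weak equivalence of $\mathbb{E}_n^\r$-modules (where the right-hand side is restricted along $g$), exhibiting $\mathbb{E}_M^\f$, viewed as a $g$-restricted $\mathbb{E}_n^\r$-module, as weakly equivalent to $\mathbb{E}_M^\r$. Since $g$ is a weak equivalence of operads, the unit $\mathbb{E}_M^\r\to g^*\mathbb{L}g_!(\mathbb{E}_M^\r)$ is a weak equivalence, and combining these identifications yields $\Phi(\mathbb{E}_M^\s)=\mathbb{L}g_!f^*(\mathbb{E}_M^\s)\simeq\mathbb{L}g_!(\mathbb{E}_M^\r)\simeq\mathbb{E}_M^\f$, the last equivalence because $g^*$ reflects weak equivalences and detects the correct homotopy type. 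To make this precise I would pick a functorial cofibrant replacement $Q\mathbb{E}_M^\r\xrightarrow{\sim}\mathbb{E}_M^\r$ of $\mathbb{E}_n^\r$-modules, so that $\Phi(\mathbb{E}_M^\s)\simeq g_!Q\mathbb{E}_M^\r$, and then check directly that $g_!Q\mathbb{E}_M^\r\to g_!\mathbb{E}_M^\f\cong\mathbb{E}_M^\f$ is a weak equivalence; since $g_!$ is left Quillen it preserves the weak equivalence $Q\mathbb{E}_M^\r\to\mathbb{E}_M^\r$ between cofibrant objects once we know $\mathbb{E}_M^\f$ (restricted) is itself cofibrant or, more robustly, once we compare via the derived counit.

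The main obstacle I anticipate is not any single geometric input—those are exactly Propositions \ref{prop:euclidean embeddings} and \ref{prop:smooth and riemannian}—but rather the bookkeeping needed to promote the arity-wise weak equivalences of \emph{spaces} to weak equivalences of \emph{operadic right modules} in a way that is compatible with restriction and induction. One must check that the maps $\Emb^\r\to\Emb^\s$ and $\Emb^\r\to\Emb^\f$ of Euclidean configuration-type modules are strictly equivariant and strictly compatible with the operad actions (the former is clear; the latter requires recalling that the $\MMfld^\r\to\MMfld^\f$ functor is built from the commuting-up-to-specified-homotopy data of Construction \ref{construction:example functor}, so one genuinely works in the homotopy-coherent setting and must invoke the rectification afforded by the model structure). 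I would handle this by working throughout with the strict simplicial categories $\MMet$, $\VVec$, $\MMic$ and their module categories, deferring all homotopy-coherence to the single homotopy pullback appearing in the definitions of $\MMfld^\r$ and $\MMfld^\f$, whose behaviour is controlled by Proposition \ref{prop:homotopy pullback}. Once that is set up, both parts follow formally.
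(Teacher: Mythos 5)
Your proof is correct and follows essentially the same route as the paper: both reduce part (1) to the fact that $f$ and $g$ are weak equivalences of operads (you correctly supply Proposition \ref{prop:smooth and riemannian} for $f$, where the paper's citation is slightly terse), invoking the general principle—the paper's explicit citation is \cite[Thm.~16.B]{Fresse:MOOF}—that restriction/induction along a weak equivalence of operads is a Quillen equivalence, and both prove part (2) via the identical zig-zag $\mathbb{L}g_!f^*\mathbb{E}^\s_M \xleftarrow{\sim}\mathbb{L}g_!\mathbb{E}_M^\r\xrightarrow{\sim}\mathbb{L}g_!g^*\mathbb{E}_M^\f\xrightarrow{\sim}\mathbb{E}_M^\f$. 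Your concerns about coherence and the phrase ``so is the composite'' (there is no literal composite of $f^*$ and $g^*$, only of the resulting equivalences of homotopy categories) are harmless imprecisions rather than gaps.
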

\begin{proof}
By Proposition \ref{prop:euclidean embeddings}, the functors $f$ and $g$ are Dwyer--Kan equivalences and hence so are the induced maps on presheaf categories \cite{Korschgen:DKEIETEP}, implying the first claim. For the second, we observe the zig-zag \[
\Phi(\mathbb{E}^\s_M)=\mathbb{L}g_!f^*\mathbb{E}^\s_M\xleftarrow{\sim} \mathbb{L}g_!\mathbb{E}_M^\r \xrightarrow{\sim}\mathbb{L}g_!g^*\mathbb{E}_M^\f\xrightarrow{\sim} \mathbb{E}_M^\f,\] where the first two weak equivalences follow from Proposition \ref{prop:euclidean embeddings}.
\end{proof}

\section{Embedding calculus in dimension 4}\label{section:dim 4}

The goal of this section is to prove Theorems \ref{athm:knots} and \ref{athm:4d}. At the heart of the matter is the question of deciding when two 4-manifolds are formally diffeomorphic. 

\subsection{Formal diffeomorphisms of $4$-manifolds} To a homeomorphism $\varphi \colon N \to M$ between formally smooth $4$-manifolds, there is associated an element $\mr{ks}(\varphi) \in H^3(N;\mathbb{Z}/2)$ called the \emph{relative Kirby--Siebenmann invariant} (in higher dimensions, it is sometimes called the \emph{Casson--Sullivan invariant} \cite{Ranicki:OTH}). A definition for smooth $4$-manifolds is given in \cite[Corollary 8.3D]{FreedmanQuinn:TO4M} and we define it now for formally smooth $4$-manifolds.

By \cite[Corollary 2]{Kister:MBFB}, the topological tangent bundles of $N$ and $M$ have essentially unique lifts to $\mathbb{R}^4$-bundles with structure group $\mr{Top}(4)$, where we recall that $\mr{Top}(4)$ is the space of self-homeomorphisms of $\mathbb{R}^4$. Thus, we have the following diagram \[\begin{tikzcd} 
&&&[5pt] B\mr{O}(4)\arrow{dd} \\
&&M\arrow{rd}{T^\tp M}\arrow{ur}{T^\f M}\\
N\ar[bend left=35]{uurrr}{T^\f N}\ar{urr}{\varphi}\ar{rrr}{T^\tp N}&&&B\mr{Top}(4),\end{tikzcd}\] in which the righthand and bottom triangles may be taken to commute strictly and the outer triangle to commute up to homotopy. The obstruction to the remaining 3-dimensional cell of the diagram commuting up to homotopy is the homotopy class of a map from $N$ to $\mr{Top}(4)/\mr{O}(4)$, which is an Eilenberg--Mac Lane space $K(\mathbb{Z}/2\mathbb{Z},3)$ through dimension $5$ \cite[Theorems 8.3B and 8.7A]{FreedmanQuinn:TO4M}. By definition, the resulting obstruction class in $H^3(N;\mathbb{Z}/2\mathbb{Z})$ is $\mr{ks}(\varphi)$. The following is immediate:

\begin{proposition}\label{prop:ks invariant} Suppose $\varphi \colon N \to M$ is a homeomorphism between formally smooth $4$-manifolds. Then $\mr{ks}(\varphi) \in H^3(N;\mathbb{Z}/2)$ vanishes if and only if $\varphi$ lifts to an isomorphism between $N$ and $M$ in $\MMfld^\f$.\end{proposition}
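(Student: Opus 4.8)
The plan is to translate both sides into statements about the classifying spaces $BO(4)$ and $B\mr{Top}(4)$ and recognize them as the same obstruction.

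First I would unwind, using the homotopy pullback model of Appendix~\ref{section:construction}, what a morphism $N\to M$ in $\MMfld^\f$ covering the fixed homeomorphism $\varphi$ is: a fiberwise linear injection $D\colon T^\f N\to T^\f M$ covering $\varphi$, together with a path in $\Map_{\MMic}(T^\tp N, T^\f M)$ joining $\alpha_M\circ T^\tp\varphi$ to $D\circ\alpha_N$, where $\alpha_N,\alpha_M$ are the microbundle refinements that are part of the formally smooth structures on $N$ and $M$. Because $T^\f N$ and $T^\f M$ have the same rank and $\varphi$ is a homeomorphism, any such $D$ is automatically a fiberwise isomorphism, so such a lift is automatically an \emph{isomorphism} in $\MMfld^\f$, with inverse given by $\varphi^{-1}$ together with $D^{-1}$ and the reversed path; hence the conditions ``$\varphi$ lifts to a morphism of $\MMfld^\f$'' and ``$\varphi$ lifts to an isomorphism of $\MMfld^\f$'' coincide. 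Conjugating by the fixed isomorphisms $\alpha_N$ and $\alpha_M$, the lifting data is equivalently a vector bundle isomorphism $D\colon T^\f N\to\varphi^*T^\f M$ over $N$ together with a path in $\Map_{\MMic}(T^\tp N,\varphi^*T^\tp M)$ from $T^\tp\varphi$ to the microbundle isomorphism underlying $D$.

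Next I would translate this into classifying-space data. By the covering homotopy theorem for vector bundles (the input to Lemma~\ref{lem:bundle fibration}), a vector bundle isomorphism $T^\f N\cong\varphi^*T^\f M$ over $N$ exists if and only if the classifying maps $T^\f N$ and $T^\f M\circ\varphi\colon N\to BO(4)$ are homotopic, and the set of homotopy classes of such isomorphisms is identified with the set of homotopy classes of homotopies between these maps; by Kister's theorem~\cite{Kister:MBFB} together with the microbundle covering homotopy theorem (the input to Lemma~\ref{lem:microbundle fibration}), the analogous statement holds for microbundles and $B\mr{Top}(4)$. These two dictionaries are compatible with the map $p\colon BO(4)\to B\mr{Top}(4)$ that forgets the vector bundle structure, and with the identifications $\alpha_N,\alpha_M$, which exhibit $p\circ T^\f N$ and $p\circ T^\f M$ as the classifying maps $T^\tp N$ and $T^\tp M$. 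It follows that a lift of $\varphi$ to $\MMfld^\f$ exists if and only if there is a homotopy $\gamma\colon T^\f N\simeq T^\f M\circ\varphi$ of maps $N\to BO(4)$ whose image $p\circ\gamma$ is homotopic rel endpoints to the homotopy $H\colon T^\tp N\simeq T^\tp M\circ\varphi$ determined by $T^\tp\varphi$. This last condition says precisely that the cell of the defining diagram of $\mr{ks}(\varphi)$ spanned by $\varphi$, $T^\f M$ and $T^\f N$ can be filled compatibly with the rest of the diagram, which is the vanishing of $\mr{ks}(\varphi)\in H^3(N;\mathbb{Z}/2)$ --- here one uses, as in the construction of $\mr{ks}$, that $\mr{Top}(4)/O(4)$ is a $K(\mathbb{Z}/2,3)$ through dimension $5$ and $\dim N=4$, so this primary obstruction is the only one. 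For the ``only if'' direction one simply extracts $\gamma$ from $D$ and the homotopy $p\circ\gamma\simeq H$ from the path in $\Map_{\MMic}$; for ``if'' one reads the equivalences backwards.

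The main obstacle I expect is bookkeeping rather than a new idea: making the two dictionaries genuinely compatible (keeping track of basepoints, of the precise role of $\alpha_N$ and $\alpha_M$, and checking that the path in $\Map_{\MMic}(T^\tp N,\varphi^*T^\tp M)$ may be taken to cover the constant path at $\varphi$, or that allowing it to cover a loop at $\varphi$ does not affect the conclusion). Each ingredient --- the homotopy pullback description of $\MMfld^\f$, the two covering homotopy theorems, Kister's theorem, and the definition of $\mr{ks}$ --- is already available; the care lies in the identification.
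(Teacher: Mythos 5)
The paper offers no written proof of Proposition~\ref{prop:ks invariant}; it is treated as an unwinding of the definition of $\mr{ks}(\varphi)$ given immediately above it. Your argument supplies exactly the details the paper leaves implicit — translating the homotopy-pullback description of $\MMfld^\f$ into classifying-space data, matching it to the cell-filling obstruction, and invoking that $\mr{Top}(4)/O(4)\simeq K(\mathbb{Z}/2,3)$ through dimension $5$ with $\dim N = 4$ to conclude there is a unique obstruction — so it is correct and takes the same approach the paper intends.
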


\begin{corollary}\label{cor:4d formal diff}
Let $N$ and $M$ be smooth, simply connected, compact $4$-manifolds. If $N$ and $M$ are homeomorphic, then $N$ and $M$ are isomorphic in $\MMfld^\f$.
\end{corollary}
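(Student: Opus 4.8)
The plan is to deduce the corollary immediately from Proposition \ref{prop:ks invariant}: it suffices to produce a single homeomorphism $\varphi \colon N \to M$ whose relative Kirby--Siebenmann invariant $\mr{ks}(\varphi) \in H^3(N;\mathbb{Z}/2)$ vanishes, for then $\varphi$ lifts to an isomorphism between $N$ and $M$ in $\MMfld^\f$, which is exactly the assertion.

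First I would arrange that $\mr{ks}$ is even defined in this situation, i.e. that $N$ and $M$ are formally smooth. This is automatic: choosing Riemannian metrics, Construction \ref{construction:example functor} (resting on Construction \ref{construction:metric micro map} and Proposition \ref{thm:metric micro iso}) equips each smooth manifold with a vector bundle refinement of its topological tangent bundle, and by the remark following Theorem \ref{thm:independence} the resulting object of $\MMfld^\f$ is, up to isomorphism, independent of the chosen metric. By hypothesis at least one homeomorphism $\varphi \colon N \to M$ exists, so $\mr{ks}(\varphi)$ makes sense.

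Next I would observe that the receptacle of the obstruction is trivial. Since $N$ is a closed $4$-manifold, $\mathbb{Z}/2$-coefficient Poincaré duality gives $H^3(N;\mathbb{Z}/2) \cong H_1(N;\mathbb{Z}/2)$, and since $N$ is simply connected we have $H_1(N;\mathbb{Z}) = 0$, hence $H_1(N;\mathbb{Z}/2) = 0$ by the universal coefficient theorem. Therefore $\mr{ks}(\varphi) = 0$ for \emph{any} homeomorphism $\varphi \colon N \to M$, and Proposition \ref{prop:ks invariant} yields the desired isomorphism in $\MMfld^\f$.

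I do not expect a genuine obstacle here; the real content sits in Proposition \ref{prop:ks invariant}, and the remaining ingredient is the elementary vanishing above. The only points warranting a word of care are that ``compact'' must be understood as ``closed'' so that Poincaré duality applies in the stated form (for a simply connected $4$-manifold with boundary one would instead obtain $H_1(N,\partial N;\mathbb{Z}/2)$, which need not vanish), and that the passage from a smooth structure to a formally smooth one be treated as canonical up to the relevant notion of equivalence, as recorded in the remark cited above.
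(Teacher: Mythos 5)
Your proof is correct and follows exactly the paper's route: choose any homeomorphism $\varphi$, note that $H^3(N;\mathbb{Z}/2) = 0$ by Poincar\'e duality and simple connectivity, and invoke Proposition \ref{prop:ks invariant}. Your aside that ``compact'' must be read as ``closed'' for the duality argument to apply is a fair observation (the paper implicitly assumes this), but otherwise the two proofs are the same.
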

\begin{proof}
Choosing a homeomorphism $\varphi$, we have $\mr{ks}(\varphi) \in  H^3(N;\mathbb{Z}/2\mathbb{Z})=0$ by Poincar\'{e} duality and the assumption that $N$ is simply-connected.
\end{proof}

\begin{remark} Supposing $N$ and $M$ to be smooth, the \emph{sum-stable smoothing theorem} in \cite[Section 8.6]{FreedmanQuinn:TO4M} asserts that, if $\varphi$ lifts to an isomorphism between $N$ and $M$ in $\MMfld^\f$ then $N$ and $M$ are \emph{stably diffeomorphic}: there exists $g \geq 0$ and a diffeomorphism
	\[\tilde{\varphi} \colon N \#_g (S^2 \times S^2) \xrightarrow{\simeq} M \#_g (S^2 \times S^2).\]
	The converse is also true, as forming the connected sum with $S^2 \times S^2$ does not affect the value of the relative Kirby--Siebenmann invariant.\end{remark}

\subsection{Proof of Theorems \ref{athm:knots} and \ref{athm:4d}}

The proofs of these theorems are now a matter of stringing weak equivalences together.

\begin{proof}[Proof of Theorem \ref{athm:knots}]
Assuming that $N$ and $M$ are smooth, simply connected, compact smooth 4-manifolds which are homeomorphic, we have the equivalences \begin{align*}
\Emb^\s(\sqcup_k S^1,N)&\simeq T_\infty\Emb^\s(\sqcup
_k S^1,N)&\quad \text{(\ref{thm:gkw})},\\
&\simeq  \Map^h_{\PSh(\DDisk^\f_1)}(\mathbb{E}_{\sqcup
_kS^1}^\f,\mathbb{E}_N^\f)&\quad \text{(\ref{thm:independence})},\\
&\simeq \Map^h_{\PSh(\DDisk^\f_1)}(\mathbb{E}_{\sqcup
_kS^1}^\f,\mathbb{E}_M^\f)&\quad \text{(\ref{cor:4d formal diff})},\\
&\simeq T_\infty\Emb^\s(\sqcup
_kS^1,M)&\quad  \text{(\ref{thm:independence})},\\
&\simeq \Emb^\s(\sqcup
_kS^1, M)&\quad\text{(\ref{thm:gkw})}.
\end{align*}
\end{proof}

\begin{proof}[Proof of Theorem \ref{athm:4d}]
Once more assuming that $N$ and $M$ are smooth, simply connected, compact smooth 4-manifolds, we have \[
T_\infty\Emb^\s(N,M)\overset{\text{(\ref{thm:independence})}}{\simeq} \Map^h_{\PSh(\DDisk^\f_4)}(\mathbb{E}_N^\f,\mathbb{E}_M^\f)
\overset{\text{(\ref{cor:4d formal diff})}}{\simeq} \Map^h_{\PSh(\DDisk^\f_4)}(\mathbb{E}_N^\f,\mathbb{E}_N^\f),\] and this last space is non-empty, as it contains the identity. On the other hand, any embedding of $N$ into $M$ is a diffeomorphism by compactness, so $\Emb^\s(N,M)$ is non-empty if and only if $N$ and $M$ are diffeomorphic.
\end{proof} 

\begin{remark}
Our proof of Theorem \ref{athm:knots} implies that, under the same hypotheses, the finite stages $T_r \Emb^\s(\sqcup_k S^1,N)$ and $T_r\Emb^\s(\sqcup_k S^1,M)$ are also weakly equivalent. A related result appears in \cite[Theorem A]{AroneSzymik:SKCESS}, where a study of the second stage of the Taylor tower is leveraged to show that, if $N$ is $n$-dimensional, the $(2n-7)$-skeleton of $\Emb^\s(S^1,N)$ does not depend on the smooth structure of $N$.
\end{remark}

\begin{remark}Note that the element of $T_\infty\Emb^\s(N,M)$ obtained in the course of the proof of Theorem \ref{athm:4d} is homotopy-invertible.\end{remark}

\subsection{Remarks on the study of smooth 4-manifolds}

In this section, we discuss some expected consequences of our results for the study of smooth 4-manifolds. This discussion is informal and should be taken as motivation for further investigation.

One source of invariants for smooth manifolds are configuration space integrals. Pioneered by Kontsevich \cite{Kontsevich:FDLT} and developed subsequently by many authors, this type of invariant is given schematically by a map of the form \[H^*(\Gamma)\lra H^*(\Emb^\s(N,M)),\] where $\Gamma$ is a combinatorially defined cochain complex of graphs. We will remain vague about the coefficients and the precise flavor of graph complex in question (there are many options); suffice it to say that an element of the graph complex is typically interpreted as a set of instructions for combining differential forms on compactified configuration spaces.

Extrapolating from results in the literature, such as \cite{Volic:FTKICF} and \cite{Prigge:TCFBSEC}, a positive answer to the following general question is expected.

\begin{question}\label{question:integrals}
Do configuration space integrals factor through the limit of the embedding calculus Taylor tower?
\[\begin{tikzcd}
H^*(\Gamma)\ar[dashed]{dr}[swap]{\exists?}\ar{r}&H^*(\Emb^\s(N,M))\\
&H^*(T_\infty\Emb^\s(N,M))\ar{u}
\end{tikzcd}\]
\end{question}

If Question \ref{question:integrals} has a positive answer, Theorem \ref{thm:independence} implies that these invariants cannot distinguish exotic smooth structures on $M$ by taking $N$ to be homeomorphic but not diffeomorphic to $M$, unless they are already not formally diffeomorphic. For example, it would follow that this use of configuration space integrals can shed no light on the smooth Poincar\'{e} conjecture in dimension $4$, or at least not directly.

A second use for configuration space integrals, accessed by setting $M=N$, is to study the classifying spaces of diffeomorphism groups. Again assuming a positive answer to Question \ref{question:integrals}, Theorem \ref{thm:independence} implies that this approach is limited to detecting the algebraic topology of \emph{formal} diffeomorphism groups; for example, the results of Watanabe \cite{Watanabe:ENERHGD} on the rational homotopy of $B\Diff_\partial(D^4)$ should be interpreted as results about the automorphisms of $D^4$ as a formally smooth manifold. This change in perspective has concrete consequences.

\begin{proposition}
If Question \ref{question:integrals} has a positive answer, then the natural map \[\Top(4)/\mr{O}(4) \to \Top/\mr{O}\] is not a weak equivalence, even after rationalising.
\end{proposition}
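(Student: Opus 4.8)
The plan is to show that a positive answer to Question~\ref{question:integrals} forces $\Top(4)/\mr{O}(4)$ to have nontrivial rational homotopy, whereas $\Top/\mr{O}$ does not---recall that $\pi_i(\Top/\mr{O})$ is $0$ for $i\le 2$, is $\mathbb{Z}/2$ for $i=3$, and is the finite Kervaire--Milnor group $\Theta_i$ for $i\ge 4$, so $\Top/\mr{O}$ is rationally trivial. Hence the two spaces cannot be weakly equivalent.

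First I would compute the automorphism space of $D^4$ in $\MMfld^\f$ with boundary fixed ($Z=S^3$), call it $\Diff^\f_\partial(D^4)$, and identify it with $\Omega^5(\Top(4)/\mr{O}(4))$. This is a formal counterpart of Morlet's comparison theorem, and it holds here essentially by construction: since $\mr{Homeo}_\partial(D^4)\simeq\pt$ by the Alexander trick and the topological tangent microbundle of $D^4$ is trivial, unwinding the homotopy-pullback description of $\MMfld^\f$ from Definition~\ref{def:formally smooth} exhibits $\Diff^\f_\partial(D^4)$ as the homotopy fiber of the map $\Map_\partial(D^4,\mr{O}(4))\to\Map_\partial(D^4,\Top(4))$ induced by $\mr{O}(4)\hookrightarrow\Top(4)$; since $\Map_\partial(D^4,X)\simeq\Omega^4 X$ and the homotopy fiber of $\mr{O}(4)\to\Top(4)$ is $\Omega(\Top(4)/\mr{O}(4))$, this identifies $\Diff^\f_\partial(D^4)\simeq\Omega^5(\Top(4)/\mr{O}(4))$. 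In particular, if $\Top(4)/\mr{O}(4)$ were rationally trivial then so would be $B\Diff^\f_\partial(D^4)$.

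Next I would bring in Watanabe's work and Theorem~\ref{thm:independence}. Because every self-embedding of $D^4$ that is the identity near the boundary is a diffeomorphism, the forgetful functor to $\MMfld^\f$ and the embedding calculus comparison map assemble into a factorization of $\eta_\infty$ for $D^4$ of the form
\[B\Diff_\partial(D^4)\lra B\Diff^\f_\partial(D^4)\lra BT_\infty\Diff_\partial(D^4),\]
where the middle term is the classifying space of the group-like monoid $\Diff^\f_\partial(D^4)=\Emb^\f_\partial(D^4,D^4)$, and the identification $T_\infty\Emb^\f_\partial(D^4,D^4)\simeq T_\infty\Emb^\s_\partial(D^4,D^4)$ used to obtain the second map is the boundary version of Theorem~\ref{thm:independence}. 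By Watanabe~\cite{Watanabe:ENERHGD}, the space $B\Diff_\partial(D^4)$ carries a nonzero rational cohomology class in positive degree lying in the image of a (family) configuration space integral. A positive answer to Question~\ref{question:integrals} means this class is pulled back along $B\eta_\infty$ from $H^\ast(BT_\infty\Diff_\partial(D^4);\mathbb{Q})$, hence---by the factorization above---from $H^\ast(B\Diff^\f_\partial(D^4);\mathbb{Q})$. Thus $B\Diff^\f_\partial(D^4)$ is rationally nontrivial, so by the first step $\Top(4)/\mr{O}(4)$ is rationally nontrivial, and therefore the natural map to the rationally trivial space $\Top/\mr{O}$ is not a weak equivalence.

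The hard part is the bookkeeping around Question~\ref{question:integrals}. One must phrase it in the family form relevant to classifying spaces of diffeomorphism groups---so that the target is $H^\ast(B\Diff_\partial(D^4))$ rather than $H^\ast(\Emb^\s(N,M))$---and verify that a positive answer there really does produce the factorization through $H^\ast(B\Diff^\f_\partial(D^4);\mathbb{Q})$, routing the comparison map through the formally smooth category by naturality of the Taylor tower together with Theorem~\ref{thm:independence}. A secondary point requiring care is the identification $\Diff^\f_\partial(D^4)\simeq\Omega^5(\Top(4)/\mr{O}(4))$ of the first step: although conceptually routine, it requires unwinding the homotopy-pullback and Kan-fibration conventions of Section~\ref{section:calc and formal smooth} (compare Lemmas~\ref{lem:microbundle fibration} and~\ref{lem:bundle fibration}).
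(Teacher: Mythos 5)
Your proposal is correct and follows the same overall strategy as the paper: combine Watanabe's nontrivial rational classes on $B\Diff_\partial(D^4)$ with Theorem~\ref{thm:independence} (in its boundary form) to force rational nontriviality of $\Top(4)/\mr{O}(4)$, and contrast this with the rational triviality of $\Top/\mr{O}$. The ingredients are identical, but the bookkeeping is organized slightly differently, in a way worth noting. The paper asserts directly that ``a version of Theorem~\ref{thm:independence} with boundary implies $T_\infty\Diff_\partial(D^4)\simeq\Omega^5\Top(4)/\mr{O}(4)$,'' which tacitly bundles together the equivalence $T_\infty\Emb^\s_\partial \simeq T_\infty\Emb^\f_\partial$, the Alexander-trick/homotopy-pullback identification of $\Diff^\f_\partial(D^4)$ with $\Omega^5\Top(4)/\mr{O}(4)$, and some form of convergence of the formal tower for the disk. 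You instead compute $\Diff^\f_\partial(D^4)\simeq\Omega^5\Top(4)/\mr{O}(4)$ explicitly from Definition~\ref{def:formally smooth} and then use only the \emph{factorization} $B\Diff_\partial(D^4)\to B\Diff^\f_\partial(D^4)\to BT_\infty\Diff_\partial(D^4)$ of the comparison map, which requires no convergence statement in the formal category; a pullback to $H^*(BT_\infty\Diff_\partial(D^4);\mathbb{Q})$ then a fortiori factors through $H^*(B\Diff^\f_\partial(D^4);\mathbb{Q})$. This is a mild but genuine simplification, at the cost of keeping track of the zigzag through $\MMfld^\r$ (there is no direct functor $\MMfld^\s\to\MMfld^\f$ in the paper), as you acknowledge. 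Two small points worth spelling out: in identifying the middle mapping space as $\Omega^4\Top(4)$ one should invoke a Kister-type argument to compare germs of microbundle automorphisms with actual homeomorphisms; and the fact that $\pi_5(\Top(4)/\mr{O}(4))=0$ (from \cite{FreedmanQuinn:TO4M}, cited in the paper) is what makes $\Diff^\f_\partial(D^4)$ connected and hence group-like so that its classifying space and the factorization through $T_\infty\Diff_\partial(D^4)$ (the invertible components) make sense.
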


\begin{proof}
By \cite[Thm.~1.1]{Watanabe:ENERHGD}, configuration space integrals produce many nontrivial classes of positive degree in $H^*(B\Diff_\partial(D^4);\mathbb{R})$, which our assumption implies are pulled back from $H^*(BT_\infty \Diff_\partial(D_4);\mathbb{R})$. A version of Theorem \ref{thm:independence} with boundary implies that the map $\Diff_\partial(D^4) \to T_\infty\Diff_\partial(D_4)$ factors over the automorphisms of $D^4$ as a formally smooth manifold. By the Alexander trick the latter are given by $\Omega^5 \mr{Top}(4)/\mr{O}(4)$, so we obtain that $\mr{Top}(4)/\mr{O}(4)$ is not rationally trivial. The claim then follows from the fact that $\mr{Top}/\mr{O}$ is rationally trivial \cite[Essay V]{KirbySiebenmann:FETMST}.\end{proof}

A third use for configuration space integrals lies in distinguishing embeddings. As many open problems in the topology of smooth $4$-manifolds are of this type, Theorem \ref{thm:independence} likewise rules out the direct use of configuration space integrals in their solutions. For example, using configuration space integrals to distinguish isotopy classes of embeddings of $S^3$ into $S^4$ cannot negatively resolve the $4$-dimensional smooth Schoenflies conjecture, as shown by the following result (here, the superscript $+$ indicates restriction to orientation-preserving embeddings).

\begin{proposition}The image of
	\[\mr{Emb}^{\s,+}(S^3 \times (-\epsilon,\epsilon),S^4) \lra T_\infty \mr{Emb}^{\s,+}(S^3\times (-\epsilon,\epsilon),S^4)\]
lies in a single path component. \end{proposition}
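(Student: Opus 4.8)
The plan is to reduce to formally smooth manifolds via Theorem~\ref{thm:independence} and then to a connectivity statement resolved by low-dimensional smoothing theory. Fix orientations of $S^3\times(-\epsilon,\epsilon)$ and $S^4$ and work throughout with orientation-preserving components. Because the equivalence produced by Theorem~\ref{thm:independence} is natural with respect to the forgetful functor $\MMfld^\s\to\MMfld^\f$ (which factors through $\MMfld^\r$), it identifies the comparison map in the statement with the composite
\[\mr{Emb}^{\s,+}\bigl(S^3\times(-\epsilon,\epsilon),S^4\bigr)\lra \mr{Emb}^{\f,+}\bigl(S^3\times(-\epsilon,\epsilon),S^4\bigr)\lra T_\infty\mr{Emb}^{\f,+}\bigl(S^3\times(-\epsilon,\epsilon),S^4\bigr),\]
where the first arrow forgets to formally smooth data and the second is the limit of the formally smooth Taylor tower. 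It therefore suffices to prove that $\mr{Emb}^{\f,+}(S^3\times(-\epsilon,\epsilon),S^4)$ is path-connected, as then its image under any map lands in one path component.

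To prove this, I would use the presentation of $\MMfld^\f$ as a homotopy pullback. Together with Proposition~\ref{prop:homotopy pullback} and Lemmas~\ref{lem:bundle fibration} and \ref{lem:microbundle fibration}, it yields a homotopy fiber sequence
\[\mr{Fib}\lra \mr{Emb}^{\f,+}\bigl(S^3\times(-\epsilon,\epsilon),S^4\bigr)\lra \mr{Emb}^{\tp,+}\bigl(S^3\times(-\epsilon,\epsilon),S^4\bigr).\]
The base is non-empty and path-connected: by the generalized Schoenflies theorem in the topological category, every locally flat $3$-sphere in $S^4$ is flat, so, invoking uniqueness of bicollars and the topological isotopy extension theorem \cite{EdwardsKirby:DSI}, any two orientation-preserving locally flat embeddings of $S^3\times(-\epsilon,\epsilon)$ into $S^4$ are ambient isotopic. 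It remains to show that the homotopy fiber $\mr{Fib}$ over an arbitrary point $f$ is path-connected.

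For the final step I would identify $\mr{Fib}$ explicitly. Since $S^3\times(-\epsilon,\epsilon)$ is homotopy equivalent to $S^3$, the bundle $T(S^3\times(-\epsilon,\epsilon))$ and the pullback $f^*TS^4$ are trivial oriented rank-$4$ bundles, and by Proposition~\ref{thm:metric micro iso} so are their underlying tangent microbundles. Using the identifications of fibers in Lemmas~\ref{lem:bundle fibration} and \ref{lem:microbundle fibration}, the fiber $\mr{Fib}$ is the homotopy fiber of the map of section spaces over $S^3$ induced by the inclusion $\mr{O}(4)\hookrightarrow\mr{Top}(4)$ of structure groups, so
\[\mr{Fib}\simeq \Map\bigl(S^3,\hofiber(\mr{O}(4)\to\mr{Top}(4))\bigr)\simeq \Map\bigl(S^3,\Omega(\mr{Top}(4)/\mr{O}(4))\bigr).\]
As $\mr{Top}(4)/\mr{O}(4)$ is $2$-connected, the loop space on the right is $1$-connected, and since $S^3$ has cells only in dimensions $0$ and $3$ we get $\pi_0\mr{Fib}\cong\pi_3\Omega(\mr{Top}(4)/\mr{O}(4))=\pi_4(\mr{Top}(4)/\mr{O}(4))$. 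By Freedman--Quinn smoothing theory the space $\mr{Top}(4)/\mr{O}(4)$ is a $K(\mathbb{Z}/2,3)$ through dimension $5$ \cite[Thms.~8.3B and 8.7A]{FreedmanQuinn:TO4M}, whence $\pi_4(\mr{Top}(4)/\mr{O}(4))=0$ and $\mr{Fib}$ is path-connected.

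The main obstacle is this last step: the entire argument rests on the low-dimensional homotopy type of $\mr{Top}(4)/\mr{O}(4)$, specifically on the vanishing $\pi_4(\mr{Top}(4)/\mr{O}(4))=0$. It is worth contrasting this with the preceding proposition, which asserts (conditionally) that $\mr{Top}(4)/\mr{O}(4)\to\mr{Top}/\mr{O}$ is \emph{not} a weak equivalence: here one uses only their agreement through a range of dimensions. A secondary point requiring care is the bookkeeping with orientations, and the verification that the homotopy-pullback presentation of $\MMfld^\f$ indeed yields the displayed fiber sequence on mapping spaces with the homotopy fiber as identified.
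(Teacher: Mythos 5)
Your proof is correct, and it takes a genuinely different route from the paper while relying on the same two essential inputs (Brown's topological Schoenflies theorem and the vanishing of $\pi_4(\mr{Top}(4)/\mr{O}(4))$). The paper, after reducing to showing that $\mr{Emb}^{\f,+}(S^3 \times (-\epsilon,\epsilon),S^4)$ is path connected, argues by \emph{extension}: Schoenflies extends any locally flat embedding to one of $\mathbb{R}^4$, and $\pi_4(\mr{Top}(4)/\mr{O}(4)) = 0$ lets the formally smooth data follow along, so the restriction $\mr{Emb}^{\f,+}(\mathbb{R}^4,S^4)\to\mr{Emb}^{\f,+}(S^3\times(-\epsilon,\epsilon),S^4)$ is $\pi_0$-surjective. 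It then identifies the source with $\mr{SO}(5)$ via the chain Proposition~\ref{prop:euclidean embeddings} $\to$ Proposition~\ref{prop:smooth and riemannian} $\to$ Lemma~\ref{lem:smooth germ}, whose only purpose is to handle the Euclidean domain. Your argument instead splits the target directly along the defining homotopy pullback of $\MMfld^\f$, establishing path-connectedness of base and fiber separately; the advantage is a cleaner conceptual picture and an explicit identification of the ``error term'' $\Map(S^3,\Omega(\mr{Top}(4)/\mr{O}(4)))$, at the cost of two extra foundational ingredients the paper avoids. First, you need $\mr{Emb}^{\tp,+}(S^3\times(-\epsilon,\epsilon),S^4)$ to be path connected, not merely that each of its elements extends to $\mathbb{R}^4$ — this is true, but it is a slightly stronger use of Schoenflies plus uniqueness of bicollars and topological isotopy extension. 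Second, the identification of the homotopy fiber of $\Map_{\VVec}(TN,TM)\to\Map_{\MMic}(T^\tp N,T^\tp M)$ over $\id_N$ with $\Map(S^3,\hofiber(\mr{O}(4)\to\mr{Top}(4)))$ tacitly uses that microbundle automorphisms of a trivial rank-$4$ microbundle over $N$ are modelled by $\Map(N,\mr{Top}(4))$, which rests on Kister--Mazur (\cite[Corollary 2]{Kister:MBFB}); the paper invokes Kister--Mazur elsewhere but does not need this precise identification in its proof of this proposition, so you should state it explicitly. With those two points made precise, your argument is sound.
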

  
\begin{proof}By Theorem \ref{thm:independence}, it suffices to show that $\mr{Emb}^{\f,+}(S^3 \times (-\epsilon,\epsilon),S^4)$ is path connected. Since the topological Schoenflies conjecture holds in dimension $4$ \cite{Brown:PGST}, every locally flat embedding $S^3 \times (-\epsilon,\epsilon) \hookrightarrow S^4$ extends to an orientation-preserving locally flat embedding $\mathbb{R}^4 \hookrightarrow S^4$. This embedding can be lifted to one of formally smooth manifolds, since $\pi_4(\mr{Top}(4)/\mr{O}(4)) = 0$ \cite[Theorems 8.3B and 8.7A]{FreedmanQuinn:TO4M}. Thus, the restriction \[\mr{Emb}^{\f,+}(\mathbb{R}^4,S^4)\lra \mr{Emb}^{\f,+}(S^3 \times (-\epsilon,\epsilon),S^4)\] is surjective on path components. Finally, we have \[\Emb^{\f,+}(\mathbb{R}^4, S^4)\overset{(\ref{prop:euclidean embeddings})}{\simeq} \Emb^{\r,+}(\mathbb{R}^4, S^4)\overset{(\ref{prop:smooth and riemannian})}{\simeq} \Emb^{\s,+}(\mathbb{R}^4, S^4)\overset{(\ref{lem:smooth germ})}{\simeq} \mr{SO}(5),\] and the latter is path connected.
\end{proof}

\begin{remark} Theorem \ref{thm:independence} and the previous discussion suggests that it may be fruitful to study smooth $4$-manifolds by
\begin{enumerate}[\indent (a)]
	\item studying formally smooth 4-manifolds, and, separately,
	\item studying the difference between smooth and formally smooth 4-manifolds.
\end{enumerate}

The study of formally smooth 4-manifolds should be much like that of smooth manifolds in higher dimensions, since the Whitney trick is available under assumptions on fundamental groups \cite{FreedmanQuinn:TO4M}. In particular, it may be possible to obtain versions of the homological stability and stable homology results of Galatius and Randal-Williams in this setting (see \cite{GalatiusRandalWilliams:MSMUG} for a survey). If so, one can study the moduli space $\mathcal{M}^\f(M)$ of formally smooth manifolds isomorphic to $M$ using the methods of homotopy theory, just as one studies the moduli space $\mathcal{M}^\s(M)$ of smooth manifolds diffeomorphic to $M$ in higher dimensions. 

Next, we wish to separate the ``exotic smooth structures'' from the ``formally smooth structures'' by defining a moduli space of ``exotic'' smooth manifolds formally isomorphic to $M$. Fixing a formally smooth manifold $M$, this moduli space is defined as the homotopy fibre
\[\mathcal{M}^{\mr{ex}}(M) \coloneqq \hofibre\left[\mathcal{M}^\s(M) \to \mathcal{M}^\f(M)\right]\]
over the specified structure. As we argued above, configuration space integrals are likely blind to the topology of this moduli space.\end{remark}

\section{Embedding calculus and exotic spheres}

In this section, we prove Theorem \ref{athm:spheres}, which asserts the existence of exotic $n$-spheres $\Sigma$ for which $T_\infty\Emb^\s(\Sigma,S^n) = \varnothing$.

\subsection{Proof of Theorem \ref{athm:spheres}} Our proof uses the following convergence criterion.

\begin{proposition}\label{prop:convergence}
Let $N_1$ and $N_2$ be non-diffeomorphic closed smooth $n$-manifolds and $M$ a smooth $m$-manifold into which $N_1$ does not embed. If $m-n\geq3$ and $N_2$ embeds in $M$, then $T_\infty\Emb^\s(N_1,N_2)=\varnothing$. In particular, the map $\Emb^\s(N_1,N_2)\to T_\infty\Emb^\s(N_1,N_2)$ is a weak equivalence.
\end{proposition}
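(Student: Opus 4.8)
The plan is to exploit functoriality and composition of the embedding calculus tower together with the convergence theorem (Theorem~\ref{thm:gkw}) to derive a contradiction from the assumption that $T_\infty\Emb^\s(N_1,N_2)\neq\varnothing$. Suppose we have a point $x \in T_\infty\Emb^\s(N_1,N_2)$, i.e. a map of operadic $\mathbb{E}_n^\s$-modules $\mathbb{E}^\s_{N_1}\to\mathbb{E}^\s_{N_2}$. Fix an honest smooth embedding $\iota \colon N_2\hookrightarrow M$, which exists by hypothesis; it induces a module map $\mathbb{E}^\s_{N_2}\to\mathbb{E}^\s_M$ and hence a point $\eta_\infty(\iota) \in T_\infty\Emb^\s(N_2,M)$.

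First I would compose: using the associative composition maps on the limit of the tower (property (ii) in the discussion after Definition~\ref{def:limit of tower}, and its compatibility with the comparison maps in (iii)), the composite $\eta_\infty(\iota)\circ x$ is a point of $T_\infty\Emb^\s(N_1,M)$. Next I would invoke Theorem~\ref{thm:gkw}: since $m - n \geq 3$, the map $\eta_\infty \colon \Emb^\s(N_1,M)\to T_\infty\Emb^\s(N_1,M)$ is a weak equivalence; in particular it is surjective on path components, so $\Emb^\s(N_1,M)\neq\varnothing$. But this contradicts the hypothesis that $N_1$ does not embed in $M$. Therefore $T_\infty\Emb^\s(N_1,N_2)=\varnothing$. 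The final sentence is then immediate: both $\Emb^\s(N_1,N_2)$ and $T_\infty\Emb^\s(N_1,N_2)$ are empty (the source is empty because $N_1$ and $N_2$ are non-diffeomorphic compact manifolds of the same dimension, so there is no embedding, let alone a diffeomorphism), so the comparison map is a weak equivalence vacuously.

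The only subtlety, and the step I would be most careful about, is making sure the composition $\eta_\infty(\iota)\circ x$ lands in the limit of the tower in a way compatible with $\eta_\infty$ on $\Emb^\s(N_1,M)$ — that is, that the square relating composition in $\Emb^\s$ to composition in $T_\infty\Emb^\s$ commutes up to homotopy. This is exactly the content of properties (i)--(iii) recorded after Definition~\ref{def:limit of tower} (with reference to \cite[Section 3.3.1]{KupersRandalWilliams:CTSA}), so no new work is needed; one just needs to note that $\eta_\infty(\iota\circ e) \simeq \eta_\infty(\iota)\circ\eta_\infty(e)$ is not quite what we use — rather we only need that the image of $\eta_\infty(\iota)\circ x$ under no further map is required, just that it is a point of a space known to be weakly equivalent to $\Emb^\s(N_1,M)$. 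So the genuine mathematical input is entirely Theorem~\ref{thm:gkw} plus the elementary observation that a compact manifold admits no embedding into a manifold of the same dimension unless it is a union of components diffeomorphic onto open-and-closed subsets; here non-diffeomorphic plus connected (or more precisely, the standard fact that $N_1 \not\hookrightarrow M$ is assumed outright) makes even that unnecessary.
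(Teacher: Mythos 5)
Your proof is correct and follows essentially the same route as the paper's: both exploit the composition map $T_\infty\Emb^\s(N_1,N_2)\times T_\infty\Emb^\s(N_2,M)\to T_\infty\Emb^\s(N_1,M)$ together with Theorem~\ref{thm:gkw} to deduce that the left factor must be empty. One small imprecision in the justification of the final sentence: you do not need (and should not use) the claim that non-diffeomorphic compact $n$-manifolds admit no embeddings into one another, which fails in general (e.g.\ $S^n\hookrightarrow S^n\sqcup S^n$); it suffices to observe that $\Emb^\s(N_1,N_2)$ maps into the already-established empty space $T_\infty\Emb^\s(N_1,N_2)$, hence is itself empty.
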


\begin{proof}
By Theorem \ref{thm:gkw} and the assumption on $N_1$, the target of the composition map \[T_\infty\Emb^\s(N_1,N_2)\times T_\infty\Emb^\s(N_2,M)\lra T_\infty\Emb^\s(N_1,M) \simeq \Emb^\s(N_1,M)\] is empty, so the source must be empty as well. The assumption on $N_2$ says that the domain of the map $\Emb^\s(N_2,M) \to T_\infty \Emb^\s(N_2,M)$ is non-empty so the right factor of the source is also non-empty. Thus the left factor is empty, as desired.
\end{proof}

The heavy lifting is handled by a collage of classical results (see also \cite[p.~408]{MahowaldThompson:EHPPH}).

\begin{theorem}[Hsiang--Levine--Szczarba, Mahowald] \label{thm:mahowald-examples}
If $n=2^j$ with $j \geq 3$, then there is an exotic $n$-sphere $\Sigma$ that does not embed in $\mathbb{R}^{n+3}$.
\end{theorem}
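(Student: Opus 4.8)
The plan is to identify an exotic $n$-sphere, for $n = 2^j$ with $j \geq 3$, whose Kervaire--Milnor class is detected by an obstruction that also obstructs embedding into codimension $3$. First I would recall that the group $\Theta_n$ of homotopy $n$-spheres is finite and that an element $\Sigma \in \Theta_n$ embeds in $\mathbb{R}^{n+k}$ if and only if its normal invariant --- equivalently, the stable normal bundle of the embedding, classified by a map to the relevant Thom space or Stiefel manifold --- is trivial in the appropriate sense. The key point is that for very small codimension $k$, namely $k \leq 3$, the obstruction to embedding is strong enough to detect a nontrivial homotopy sphere. The reference to the Hsiang--Levine--Szczarba work on embeddings of homotopy spheres \cite{Levine:ESHS} (and related) is exactly the input: they compute which homotopy spheres bound parallelizable manifolds and relate low-codimension embeddability to the $J$-homomorphism and to Adams-type obstructions.

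The main steps, in order, would be: (1) Show that if a homotopy $n$-sphere $\Sigma$ embeds in $\mathbb{R}^{n+3}$, then its class in $\Theta_n$ lies in a specific subgroup cut out by the vanishing of an obstruction --- concretely, the image of the normal bundle under a map into $\pi_n$ of a low-dimensional Stiefel manifold $V_{n+3,3}$, or equivalently the triviality of a secondary cohomology operation. This uses obstruction theory for the normal bundle of the embedding together with the fact that $\Sigma$ is stably parallelizable. (2) Invoke Mahowald's computations in the EHP sequence / metastable homotopy of spheres to exhibit, for $n = 2^j$ with $j \geq 3$, an element of $\pi_n^s$ (or of $\pi_{n+2}(S^3)$ via EHP) that is nonzero and is the image of an exotic sphere under the Kervaire--Milnor exact sequence; the dimensions $n = 2^j$ are where Mahowald's $\eta_j$-family or the relevant Arf-invariant-adjacent elements live. (3) Combine (1) and (2): the exotic sphere $\Sigma$ whose normal invariant is this Mahowald element cannot embed in $\mathbb{R}^{n+3}$, because its embeddability obstruction is precisely that nonzero class.

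The main obstacle --- and the place where one genuinely leans on the classical literature rather than reproving things --- is step (1)-(2), namely pinning down the precise relationship between the low-codimension embedding obstruction and a computable homotopy group, and then knowing that this group is nonzero exactly in dimensions $2^j$. This is a delicate metastable homotopy theory computation: one must track the normal bundle obstruction through the James/EHP spectral sequence and match it against Mahowald's work on the order of the image of the $J$-homomorphism and the behaviour of the elements $\eta_j$. I would cite Hsiang--Levine--Szczarba for the translation of "embeds in codimension $3$" into a homotopy-theoretic condition, and Mahowald (as in \cite{MahowaldThompson:EHPPH} and Mahowald's thesis-era papers) for the nonvanishing in dimensions $n = 2^j$, $j \geq 3$; the role of this paper is only to assemble these into the clean statement above. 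A secondary, easier point to check is that the resulting $\Sigma$ is genuinely exotic (not the standard sphere), which follows since the standard sphere obviously embeds in $\mathbb{R}^{n+1}$, so any sphere failing to embed in $\mathbb{R}^{n+3}$ is exotic.
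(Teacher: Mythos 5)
Your proposal relies on the idea that embeddability of $\Sigma$ in $\mathbb{R}^{n+3}$ is governed by an obstruction living in (the homotopy of) a low-dimensional Stiefel manifold $V_{n+3,3}$, i.e.\ essentially by the normal bundle of such an embedding. This is where the argument breaks down: by Massey's theorem on normal bundles of spheres in Euclidean space, \emph{every} codimension-$3$ embedding of $\Sigma^n$ in $\mathbb{R}^{n+3}$ automatically has trivial normal bundle, so there is no normal-bundle obstruction in that codimension to exploit, and the ``subgroup cut out by vanishing of the obstruction'' you posit in step (1) would be all of $\Theta_n$ if read literally. The obstruction to a codimension-$3$ embedding is extrinsic and more subtle than the normal invariant, which is precisely why the paper does \emph{not} try to analyze it directly.

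The paper's route is instead indirect and goes through a much higher codimension. Hsiang--Levine--Szczarba prove two things: (a) a rigidity statement, that for $n < 2(n-3)-1$ the normal bundle of an embedding $\Sigma^n \hookrightarrow \mathbb{R}^{2n-3}$ is independent of the choice of embedding; and (b) a construction criterion, that an exotic sphere embedding in $\mathbb{R}^{2n-3}$ with \emph{nontrivial} normal bundle exists once one produces a class $\alpha\in\pi_{n-1}(SO(n-3))$ killed by both the stabilization $i\colon\pi_{n-1}(SO(n-3))\to\pi_{n-1}(SO)$ and the $J$-homomorphism $J\colon\pi_{n-1}(SO(n-3))\to\pi_{2n-4}(S^{n-3})$. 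The relevant $\alpha$ is $\partial(\nu)$, the image of the Hopf class $\nu\in\pi_n(S^{n-3})$ under the boundary map of the bundle $SO(n-3)\to SO(n-2)\to S^{n-3}$; it dies under $i$ for torsion reasons when $n\equiv 0\pmod 8$, and $J(\alpha)=[\iota,\nu]$ vanishes by Mahowald's theorem on Whitehead products $[\iota,\nu]$ precisely when $n=2^j$, $j\geq 3$. Given such a $\Sigma$, if it also embedded in $\mathbb{R}^{n+3}$, then composing with $\mathbb{R}^{n+3}\hookrightarrow\mathbb{R}^{2n-3}$ would produce an embedding in $\mathbb{R}^{2n-3}$ whose normal bundle, by Massey's theorem on the codimension-$3$ piece, is trivial---contradicting the rigidity of (a). Your proposal has the right ingredients (HLS and Mahowald) but not the architecture: the detour through $\mathbb{R}^{2n-3}$ and the triviality of codimension-$3$ normal bundles is what makes the argument close. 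Also note that the specific Mahowald input is the vanishing of the Whitehead product $[\iota,\nu]\in\pi_{2n-4}(S^{n-3})$ for $n=2^j$, not a statement about the $\eta_j$-family in the stable stems.
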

\begin{proof}
It suffices to show that there is an exotic $n$-sphere $\Sigma$ that embeds in $\mathbb{R}^{2n-3}$ with nontrivial normal bundle. Indeed, our assumptions on $n$ imply that $n<2(n-3)-1$, so \cite[Lemma 1.1]{HsiangLevineSzczarba:NBHSEES} then guarantees that \emph{every} embedding of $\Sigma$ in $\mathbb{R}^{2n-3}$ has nontrivial normal bundle. Since every embedding of $\Sigma$ in $\mathbb{R}^{n+3}$ has trivial normal bundle by \cite[Corollary]{Massey:NBSIES}, there can be no such embedding, or else the composite \[\Sigma \lra \mathbb{R}^{n+3} \lra \mathbb{R}^{2n-3}\] has trivial normal bundle, a contradiction.

In order to find such a $\Sigma$, it suffices by \cite[Theorem 1.2]{HsiangLevineSzczarba:NBHSEES} to find a non-zero element $\alpha\in \pi_{n-1}(\mr{SO}(n-3))$ annihilated by the maps $i \colon \pi_{n-1}(\mr{SO}(n-3))\to \pi_{n-1}(\mr{SO})\cong\mathbb{Z}$ and $J \colon \pi_{n-1}(\mr{SO}(n-3))\to \pi_{2n-4}(S^{n-3})$. 

When $n \equiv 0  \pmod 8$, $\pi_{n-1}(\mr{SO}(n-3)) \cong \mathbb{Z} \oplus \mathbb{Z}/2$ by \cite[p.~161]{Kervaire:SNHGOLG}, with the $2$-torsion generated by the image $\partial(\nu)$ of a generator $\nu\in \pi_{n}(S^{n-3})\cong\mathbb{Z}/24\mathbb{Z}$ under the connecting homomorphism
\[\partial \colon \pi_{n}(S^{n-3}) \lra \pi_{n-1} \mr{SO}(n-3)\]
of the fibration sequence $\mr{SO}(n-3) \to \mr{SO}(n-2) \to S^{n-3}$ \cite[Theorem 3(i)]{Kervaire:SNHGOLG}. 

We now prove that $\alpha = \partial(\nu)$ is in the kernel of both $i$ and $J$. According to \cite[p. 176]{HsiangLevineSzczarba:NBHSEES}, the composite \[\pi_{n}(S^{n-3})\xrightarrow{\,\partial\,} \pi_{n-1}(\mr{SO}(n-3))\xrightarrow{\,J\,}\pi_{2n-4}(S^{n-3})\] is the Whitehead product $[\iota,-]$, where $\iota\in \pi_{n-3}(S^{n-3})$ is a generator. Then $i(\alpha) \in \pi_{n-1}(\mr{SO}) \cong \mathbb{Z}$ is torsion, hence zero, while $J(\alpha)=[\iota,\nu]=0$ by \cite[page 249, (2)]{Mahowald:NIF} because $n = 2^j$ with $j \geq 3$ (\cite[Theorem 1.1.2(b)]{Mahowald:SWPS} proved there are no other cases).
\end{proof}

\begin{proof}[Proof of Theorem \ref{athm:spheres}] Set $N_1=\Sigma$ as in Theorem \ref{thm:mahowald-examples}, $N_2=S^n$, and $M=\mathbb{R}^{n+3}$ in Proposition \ref{prop:convergence}.\end{proof}

Given this result, several questions naturally arise.

\begin{question}\label{ques:exotic} Given exotic $n$-spheres $\Sigma$ and $\Sigma'$, is $T_\infty\Emb^\s(\Sigma,\Sigma')$ empty whenever $\Sigma$ and $\Sigma'$ are not diffeomorphic?\end{question}

The argument for Theorem \ref{athm:spheres} proves something stronger than the statement.

\begin{corollary}\label{cor:spheres finite stage}
For $\Sigma$ as in Theorem \ref{athm:spheres}, the map \[\Emb^\s(\Sigma, S^n) \lra T_{k}\Emb^\s(\Sigma, S^n)\] is a weak equivalence for any $k\geq n-4$.
\end{corollary}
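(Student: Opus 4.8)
The plan is to refine the proof of Theorem~\ref{athm:spheres} by tracking the connectivity bound in Theorem~\ref{thm:gkw} quantitatively, rather than only passing to the limit. The key observation is that Proposition~\ref{prop:convergence} and Theorem~\ref{thm:mahowald-examples} together produce the emptiness of \emph{each} finite stage, provided the stage is high enough that the convergence estimate for the target $\mathbb{R}^{n+3}$ already applies. Concretely, with $N_1=\Sigma$, $N_2=S^n$, and $M=\mathbb{R}^{n+3}$, the composition map $T_k\Emb^\s(\Sigma,S^n)\times T_k\Emb^\s(S^n,\mathbb{R}^{n+3})\to T_k\Emb^\s(\Sigma,\mathbb{R}^{n+3})$ is available at every finite stage, so the strategy is to show that $T_k\Emb^\s(\Sigma,\mathbb{R}^{n+3})$ is already empty for $k\geq n-4$, while $T_k\Emb^\s(S^n,\mathbb{R}^{n+3})$ remains nonempty (since $S^n$ genuinely embeds in $\mathbb{R}^{n+3}$, the actual embedding maps to a point of every $T_k$).

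First I would apply Theorem~\ref{thm:gkw} with $m=n+3$ and source handle dimension. The relevant manifold here is $\Sigma$, which is a homotopy sphere, hence bounds a contractible manifold and has handle dimension at most... one must be slightly careful: $\Sigma$ itself, being closed, does not have small handle dimension, but the remark following Theorem~\ref{thm:gkw} allows replacing $n$ by $\mr{hdim}(\Sigma)$. Since $\Sigma$ is an $n$-manifold we simply use $\mr{hdim}(\Sigma)\leq n$ and the stated estimate: $\eta_k$ for $\Emb^\s(\Sigma,\mathbb{R}^{n+3})$ is $(3-(n+3)+(k+1)((n+3)-n-2))=(k-n-1)$-connected, hence for $k\geq n-4$ the map $\Emb^\s(\Sigma,\mathbb{R}^{n+3})\to T_k\Emb^\s(\Sigma,\mathbb{R}^{n+3})$ is $(-5+\text{something})$-connected—I would recompute carefully, but the point is that $(-1)$-connectedness, i.e.\ surjectivity on $\pi_0$, kicks in precisely around $k= n-4$. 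Since $\Emb^\s(\Sigma,\mathbb{R}^{n+3})=\varnothing$ by Theorem~\ref{thm:mahowald-examples}, a $(-1)$-connected map into $T_k\Emb^\s(\Sigma,\mathbb{R}^{n+3})$ forces the target to be empty too.

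Then I would feed this back through the composition argument exactly as in Proposition~\ref{prop:convergence}: the target of $T_k\Emb^\s(\Sigma,S^n)\times T_k\Emb^\s(S^n,\mathbb{R}^{n+3})\to T_k\Emb^\s(\Sigma,\mathbb{R}^{n+3})$ is empty, and the second factor is nonempty, so $T_k\Emb^\s(\Sigma,S^n)=\varnothing$. Combined with $\Emb^\s(\Sigma,S^n)=\varnothing$ (which holds since $\Sigma$ is not diffeomorphic to $S^n$ and any embedding of a closed $n$-manifold into $S^n$ is a diffeomorphism by invariance of domain and connectedness), the comparison map $\Emb^\s(\Sigma,S^n)\to T_k\Emb^\s(\Sigma,S^n)$ is a map between empty spaces, hence a weak equivalence.

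The main obstacle is pinning down the exact threshold: one must verify that the connectivity $3-m+(k+1)(m-n-2)$ with $m=n+3$ gives a nonnegative—or at least $(-1)$—value precisely when $k\geq n-4$, and confirm that the version of Theorem~\ref{thm:gkw} with handle dimension (or just with $n$) yields the same bound; there is also the minor subtlety of whether one should invoke $\mr{hdim}$ of $\Sigma$ at all, since using the naive dimension $n$ already suffices here. A secondary point requiring care is that the composition and its compatibility with the finite filtration stages, as recorded in the discussion after Definition~\ref{def:limit of tower}, must be invoked at stage $k$ rather than only in the limit; this is exactly the content of property~(iii) there, so no new input is needed. Everything else is a transcription of the limit-level argument to a fixed finite stage.
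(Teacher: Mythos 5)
Your approach is the same as the paper's: by Theorem~\ref{thm:gkw}, show $T_k\Emb^\s(\Sigma,\mathbb{R}^{n+3})=\varnothing$ once $k$ is large enough, then rerun the composition argument of Proposition~\ref{prop:convergence} at stage $k$ using that $T_k\Emb^\s(S^n,\mathbb{R}^{n+3})\neq\varnothing$, and observe both $\Emb^\s(\Sigma,S^n)$ and $T_k\Emb^\s(\Sigma,S^n)$ are empty. Your invocation of the compatibility of composition with the tower (property~(iii) after Definition~\ref{def:limit of tower}) is exactly right.

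The one thing to sort out is the arithmetic, where you rightly flagged uncertainty. Plugging $m=n+3$ into the expression $3-m+(k+1)(m-n-2)$ gives $k-n+1$, not $k-n-1$, and $k-n+1\geq 0$ would force $k\geq n-1$, not $n-4$. To reproduce the $n-4$ threshold you should read the first subtracted term as the \emph{source} dimension (as in the Goodwillie--Weiss original), i.e.\ $3-n+(k+1)(m-n-2)=k+4-n$, which is nonnegative exactly when $k\geq n-4$. (Whether you read it with $\mr{hdim}(\Sigma)=n$ in place of $n$ makes no difference here, since $\Sigma$ is closed.) Once that is fixed your argument is a faithful transcription of the paper's proof of this corollary.
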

\begin{proof}
By Theorem \ref{thm:gkw}, the map $\Emb^\s(\Sigma,\mathbb{R}^{n+3}) \to T_k \Emb^\s(\Sigma,\mathbb{R}^{n+3})$ is a $\pi_0$-surjection when $k \geq n-4$, and similarly for $S^n$ in place of $\Sigma$, so $T_k\Emb^\s(\Sigma,\mathbb{R}^{n+3})=\varnothing$ in this range, and the argument of Proposition \ref{prop:convergence} applies. 
\end{proof}

Thus the $(n-4)$th stage of the embedding calculus Taylor tower can distinguish these exotic smooth structures. On the other hand, since the first stage is given by bundle maps between tangent bundles, the fact that exotic spheres have isomorphic tangent bundles shows that the first stage does not depend on the smooth structure of $\Sigma$. Thus, in the following question, $k$ lies in the range $2 \leq k \leq n-4$.

\begin{question}What is the smallest $k$ such that $T_k \Emb^\s(\Sigma,S^n) = \varnothing$?
\end{question}

The embedding calculus Taylor tower can be modeled geometrically in terms of stratified maps of bundles over compactified configuration spaces \cite{Turchin:CFMCFMO,BoavidaWeiss:SSMCC}. Since the first stage of the tower is never empty in the case at hand, it follows that, in examples where $T_\infty\Emb^\s(\Sigma, S^n)=\varnothing$, such a stratified map exists between compactified configuration spaces of $k-1$ points that does not extend to configurations of $k$ points.

\begin{question}
Does the classification of exotic spheres admit an interpretation in terms of stratified obstruction theory applied to compactified configuration spaces?
\end{question}

\subsection{Further examples} \label{sec:examples exotic}
We indicate a few other exotic spheres for which the conclusion of Theorem \ref{athm:spheres} holds.

\begin{example}\label{example:antonelli} The paper \cite{Antonelli:SDESMR} studies the values of $n$ and $r$ for which the quotient of $\Theta_n$, the group of oriented exotic spheres under connected sum, by the subgroup of oriented exotic spheres which embed in $\mathbb{R}^{n+r}$ with trivial normal bundle is non-zero. In particular, \cite[Table 1]{Antonelli:SDESMR} provides examples of exotic $n$-spheres in dimensions $n=17,18,32,33,34,37,38$ which do not embed in $\mathbb{R}^{n+3}$.\end{example}

\begin{example}\label{example:levine} According to \cite{Levine:ACOFDK}, the generators of $\Theta_n$ for $n=8,9,10$ do not embed in $\mathbb{R}^{n+3}$.
\end{example}

In general, the homotopy theoretic problem indicated by the proof of Theorem \ref{thm:mahowald-examples}, which we believe to be of independent interest, remains open. 

\begin{question}Which elements of $\pi_{n-1}(\mr{SO}(n-3))$ lie in the common kernel of 
	\[i \colon \pi_{n-1}(\mr{SO}(n-3))\to \pi_{n-1}(\mr{SO}) \quad \text{and} \quad J \colon \pi_{n-1}(\mr{SO}(n-3))\to \pi_{2n-4}(S^{n-3})?\]\end{question}

One can also vary the target in Theorem \ref{athm:spheres}. 

\begin{example}\label{example:bp-spheres} In \cite[Theorem I]{Kervaire:HDK} it is proven that an oriented exotic $n$-sphere $\Sigma'$ embeds in $\mathbb{R}^{n+2}$ if and only if it represents an element of the subgroup $bP_{n+1} \subset \Theta_n$ of oriented exotic $n$-spheres that bound a stably parallellisable $(n+1)$-manifold. In the proof of Theorem \ref{athm:spheres}, all we used about $S^n$ is that it embeds in $\mathbb{R}^{n+3}$, so the same argument gives us that
	\[T_\infty\Emb^\s(\Sigma,\Sigma')=\varnothing\]
whenever $\Sigma'$ represents an element of $bP_{n+1}$ and $\Sigma$ is as in Examples \ref{example:antonelli} and \ref{example:levine}. (It is also true for $\Sigma$ as in Theorem \ref{thm:mahowald-examples}, but for even $n$ the group $bP_{n+1}$ is always trivial.)
\end{example}

\section{Isotopy extension for embedding calculus}

Fix manifolds $M$ and $N$ of equal dimension $n$, a compact smooth submanifold $P \subseteq N$ of codimension $0$, and an embedding $e$ of $P$ in $M$. Even though $P$ is not an object of $\MMfld^\s$ we can still define the presheaf $\Emb^\s(-,P)$, obtain a corresponding presheaf $\mathbb{E}^\s_P$ on $\DDisk_n^\s$, and define $T_\infty \Emb^\s(P,M)$ to be the derived mapping space $\Map^h_{\PSh(\DDisk_n^\s)}(\mathbb{E}^\s_P,\mathbb{E}^\s_M)$ of presheaves on $\DDisk_n^\s$. The goal of this section is to prove the following result.

\begin{theorem}\label{thm:isotopy ext} Let $M$, $N$ and $P$ be as above. If $\mr{hdim}(P) \leq \dim(M)-3$ or $P = \sqcup_I D^n$ for some finite set $I$, then the diagram
\[\begin{tikzcd} T_\infty \mr{Emb}^s_\partial(N \setminus \mathring{P},M \setminus \mathring{P}) \rar \dar & T_\infty \mr{Emb}^s(N,M) \dar \\
\ast \rar & T_\infty\mr{Emb}^s(P,M)\end{tikzcd}\] is homotopy Cartesian, where the bottom map is induced by the embedding $e$.
\end{theorem}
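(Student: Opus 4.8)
The plan is to prove this by comparing the embedding-calculus square with the corresponding square of honest embedding spaces, where isotopy extension is already known, and then transferring the conclusion. First I would recall the \emph{smooth isotopy extension theorem}: for $P \subseteq N$ compact of codimension $0$, restriction along $P \hookrightarrow N$ makes $\mr{Emb}^\s(N,M) \to \mr{Emb}^\s(P,M)$ a fibration, with homotopy fiber over $e$ identified with $\mr{Emb}^\s_\partial(N \setminus \mathring P, M \setminus e(\mathring P))$; rechoosing a collar identifies the latter with $\mr{Emb}^\s_\partial(N \setminus \mathring P, M \setminus \mathring P)$ up to equivalence. Thus the analogous square of actual embedding spaces is homotopy Cartesian, essentially by definition. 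The task is to promote this to the Taylor-tower level, where no convergence hypothesis is available in general.

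The key step is to show that the embedding-calculus construction, applied to the inclusion $N \setminus \mathring P \hookrightarrow N$ (relative to $Z = \partial P$), turns the restriction map into a fibration of derived mapping spaces whose fiber is again computed by $T_\infty$ of embeddings of the complement, rel boundary. Concretely, the right $\mathbb{E}^\s_n$-module $\mathbb{E}^\s_N$ should be built from $\mathbb{E}^\s_P$ together with the module $\mathbb{E}^\s_{N \setminus \mathring P, \partial}$ over the operad $\mathbb{E}^\s_{n, Z}$ with $Z = \partial P$, via a pushout (or, dually, a pullback after applying $\Map^h_{\mathbb{E}^\s_n}(-, \mathbb{E}^\s_M)$): morally, embeddings of $N$ are embeddings of $P$ together with a compatible embedding of the complement rel the common boundary. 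Making this precise requires the boundary version of the formalism reviewed in Section \ref{section:embedding calculus}: one has the pair of adjunctions relating $\mathbb{E}^\s_{n,Z}$-modules and $\mathbb{E}^\s_n$-modules (induction/restriction along $Z \hookrightarrow \varnothing$), and one checks that $\mathbb{E}^\s_N$ is the derived pushout of $\mathbb{E}^\s_P \leftarrow (\text{induced module of } \partial P) \to (\text{induced module of } N \setminus \mathring P \text{ rel } \partial P)$. Applying $\Map^h_{\mathbb{E}^\s_n}(-,\mathbb{E}^\s_M)$ then sends this homotopy pushout to a homotopy pullback, and identifying the three resulting mapping spaces with $T_\infty\mr{Emb}^\s(P,M)$, $T_\infty \mr{Emb}^\s(N,M)$, and the fiber term $T_\infty\mr{Emb}^\s_\partial(N \setminus \mathring P, M \setminus \mathring P)$ gives exactly the claimed homotopy Cartesian square. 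The role of the hypothesis $\mr{hdim}(P) \leq \dim M - 3$ or $P = \sqcup_I D^n$ is to guarantee that the bottom-left corner is actually contractible: by Theorem \ref{thm:gkw} (in its handle-dimension form) $T_\infty \mr{Emb}^\s(P,M) \simeq \mr{Emb}^\s(P,M)$ when $\mr{hdim}(P) \leq \dim M -3$, and the chosen embedding $e$ gives a point; for $P = \sqcup_I D^n$ one instead invokes Proposition \ref{prop:smooth pullback} (or rather its consequence that $T_\infty$ of a disjoint union of disks is the configuration-space--bundle model, which is non-empty and in fact the relevant space is equivalent to $\mr{Emb}^\s(\sqcup_I \mathbb{R}^n, M)$, connected to a point via $e$), so that again the relevant corner is filled by $e$ and, crucially, the homotopy fiber is taken over a point rather than over a more complicated space. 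Either way the square in the statement is the homotopy pullback square with bottom-left corner $\ast \xrightarrow{e} T_\infty\mr{Emb}^\s(P,M)$.

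I would organize the argument as: (1) set up the boundary version of the module $\mathbb{E}^\s_{N \setminus \mathring P, \partial}$ over $\mathbb{E}^\s_{n, \partial P}$ and the induction functor to $\mathbb{E}^\s_n$-modules; (2) prove the decomposition $\mathbb{E}^\s_N \simeq \mathbb{E}^\s_P \amalg^h_{(\ast\text{-induced from }\partial P)} (\text{induced from } N \setminus \mathring P)$ of right $\mathbb{E}^\s_n$-modules, by a gluing/excision argument on presheaves — here one uses that a configuration in $N$ either meets the complement of a (shrunk) collar of $P$ or can be pushed into $P$, together with functoriality of the module construction; (3) apply $\Map^h_{\mathbb{E}^\s_n}(-, \mathbb{E}^\s_M)$ and identify the corners; (4) use Theorem \ref{thm:gkw} / Proposition \ref{prop:smooth pullback} under the stated hypotheses to see the bottom-left corner is a point. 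The main obstacle I anticipate is step (2): making the pushout decomposition of $\mathbb{E}^\s_N$ precise and homotopy-invariant, since it is a statement about operadic modules built from non-compact pieces with matching boundary, and one must be careful that the induction functors are derived correctly and that the gluing is along a cofibration (or that one works with a suitable fibrant–cofibrant model, e.g. via the functorial cofibrant replacement already mentioned in the excerpt). A secondary subtlety is the rechoosing of collars needed to pass between $M \setminus e(\mathring P)$ and $M \setminus \mathring P$ coherently — this is exactly the point where the isotopy-extension philosophy enters even on the calculus side, and it is handled by the same collar-straightening argument as in the classical case, now applied inside the presheaf-theoretic framework.
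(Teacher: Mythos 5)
The paper's proof takes a genuinely different route, and the one you propose has a gap at exactly the step you flag as "the main obstacle": the claimed module-level pushout decomposition is not true in the form stated. If you cover $N$ by an open neighborhood $U$ of $P$ and an open neighborhood $V$ of $N \setminus \mathring{P}$ with $U \cap V$ a collar of $\partial P$, then in arity $k\geq 2$ the space $\mathbb{E}^\s_N(k) = \Emb^\s(\sqcup_k\mathbb{R}^n,N)$ contains configurations with some components deep in $P$ and others deep in the complement, and these do not lie in the image of either $\mathbb{E}^\s_U(k)$ or $\mathbb{E}^\s_V(k)$; the arity-wise pushout of $\mathbb{E}^\s_U \leftarrow \mathbb{E}^\s_{U \cap V} \to \mathbb{E}^\s_V$ is therefore a strictly smaller module than $\mathbb{E}^\s_N$. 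This is precisely why manifold calculus works with \emph{Weiss covers} (every finite subset is contained in some element of the cover) and not ordinary open covers: a two-element cover of a connected manifold is essentially never a Weiss cover, so there is no two-term pushout formula for $\mathbb{E}^\s_N$. On top of this, "the induced module of $\partial P$" does not make sense as a right $\mathbb{E}^\s_n$-module since $\partial P$ has dimension $n-1$; you would need to say "the collar," and the induction along the relation between $\mathbb{E}^\s_{n,\partial P}$ and $\mathbb{E}^\s_n$ is not along an operad map, since those are operads of different shapes.

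The paper avoids all of this by building, around $P$, a \emph{complete Weiss cover} $\mathcal{D}_{P\subset N}$ whose elements are collars of $P$ union finitely many balls, proving that $T_\infty$ satisfies descent for complete Weiss covers (Lemma \ref{lem:weiss descent}: derived mapping spaces out of $\hocolim_U \mathbb{E}^\s_U$ become homotopy limits), and that this particular poset is contractible. It then compares the resulting $\holim$ of $T_\infty \Emb^\s(U,-)$ with the $\holim$ of honest $\Emb^\s(U,-)$ term by term, and transports the classical isotopy-extension fiber sequence from the latter to the former. You have also misidentified the role of the hypothesis: it is not to make any corner of the square contractible (the bottom-left corner is $\ast$ by fiat), but to guarantee that the pieces $U\in\mathcal{D}_{P\subset N}$ — which are collars of $P$ plus balls, hence have handle dimension $\max(\mr{hdim}(P),0)$ — satisfy $\Emb^\s(U,M)\xrightarrow{\sim} T_\infty\Emb^\s(U,M)$, either via Theorem \ref{thm:gkw} when $\mr{hdim}(P)\leq\dim(M)-3$ or via the Yoneda lemma when $P=\sqcup_I D^n$. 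Without these pointwise equivalences, there is no way to transport the fiber sequence. If you want to pursue a module-theoretic excision statement of the kind you envision, you would essentially need to re-derive Weiss descent in disguise, and you would still need the pointwise convergence input at the collar level, so the cover-based argument is both more direct and no less conceptual.
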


Removing the symbol $T_\infty$ from the statement, one obtains the conclusion of the usual isotopy extension theorem~\cite[Chapter 6]{Wall:DT}, an important tool in the study of spaces of embeddings and diffeomorphisms. Thus, Theorem \ref{thm:isotopy ext} asserts that isotopy extension holds for limits of Taylor towers. A few remarks are in order.

\begin{remark}\label{rem:isotopy-ext} \
	\begin{enumerate}[\noindent (i)]
		\item We will see that the top horizontal map is the extension-by-identity map, as in Section \ref{sec:extend-by-identity}.
		\item In this theorem, two different incarnations of embedding calculus occur; the top lefthand corner uses the version for presheaves on $\MMfld^\s_{\partial P}$, while the two righthand corners use the version for presheaves on $\MMfld^\s$.
		\item Since $P$ and $\mathring{P}$ are isotopy equivalent, the inclusion $\mathring{P} \to P$ induces a weak equivalence of presheaves $\Emb^\s(-,\mathring{P}) \to \Emb^\s(-,P)$ and thus a  weak equivalence $T_\infty\mr{Emb}^s(P,M) \to T_\infty\mr{Emb}^s(\mathring{P},M)$. Under the hypotheses of the theorem, the latter has the weak homotopy type of $\Emb^\s(\mathring{P},M)$ by Theorem \ref{thm:gkw}.
		\item A more technical hypothesis guaranteeing the conclusion of the theorem is that $\Emb^\s\big(\mathring{P} \,\sqcup\, \bigsqcup_k D^n,M\big) \to T_\infty\Emb^\s\big(\mathring{P}\, \sqcup \, \bigsqcup_k D^n,M\big)$ is a weak equivalence for all $k \geq 0$.
		\item \label{enum:neat corners} Isotopy extension for embedding calculus generalizes to spaces of neat embeddings of manifolds with corners. Here the input is as follows: $N$ and $M$ are manifolds of equal dimension $n$ with fixed embedding $\partial N \to \partial M$, and $P \subseteq N$ is a neatly embedded compact smooth submanifold of codimension 0 with corners, whose boundary $\partial P$ is the union of $\partial_0 P = \partial P \cap \partial N$ and a submanifold $\partial_1 P$, which meets at the subset of corners of $P$. Fixing a neat embedding $e \colon P \to N$ which is equal to the given embedding near $\partial_0 P$, we have the homotopy Cartesian square
		\[\begin{tikzcd} T_\infty \mr{Emb}^s_{\partial_1 P \cup \partial N \setminus \mathring{\partial_0} P}(N \setminus \mathring{P},M \setminus \mathring{P}) \rar \dar & T_\infty \mr{Emb}^s_{\partial N}(N,M) \dar \\
			\ast \rar & T_\infty\mr{Emb}^s_{\partial_0 P}(P,M).\end{tikzcd}\]
		The argument is essentially the same as that given below, but with more involved notation.
	\end{enumerate}
\end{remark}

\subsection{Proof of Theorem \ref{thm:isotopy ext}} 

\subsubsection{Complete Weiss covers} We begin with a discussion of a well-known form of locality enjoyed by embedding calculus.

\begin{definition}\label{def:weiss cover}
Let $X$ be a topological space and $1 \leq k \leq \infty$. A collection of open subsets $\mathcal{U}$ of $X$ is a \emph{Weiss $k$-cover} if every finite subset of $X$ with cardinality $\leq k$ is contained in some element of $\mathcal{U}$. A Weiss $k$-cover $\mathcal{U}$ is \emph{complete} if it contains a Weiss $k$-cover of $\bigcap_{U\in\mathcal{U}_0}U$ for every finite subset $\mathcal{U}_0\subseteq\mathcal{U}$.
\end{definition}

The following result asserts that $T_k$ has descent for complete Weiss $k$-covers. The intended application is to $k=\infty$ and $\mathbb{E}^\s_{M,\partial}$.

\begin{lemma}\label{lem:weiss descent}
Let $N$ be a smooth manifold and $1 \leq k \leq \infty$. If $F$ is a presheaf on $\MMfld_Z$, and $\mathcal{U}$ is a complete Weiss $k$-cover of $N$, each element of which contains $\partial N$, then the natural map \[T_k F(N)\lra \holim_{U\in\mathcal{U}}T_k F(U)\] is a weak equivalence.
\end{lemma}

\begin{proof}
Since derived mapping spaces convert homotopy colimits in the source to homotopy limits, it suffices to show that the natural map \[\hocolim_{U\in\mathcal{U}}\mathbb{E}^\s_{U,\partial} \lra \mathbb{E}^\s_{N,\partial}\] is a weak equivalence of presheaves on the full subcategory $\DDisk_{n,Z,\leq k}^\s$ whose objects are diffeomorphic to a disjoint union of $Z \times [0,1)$ and finitely many but at most $k$ copies of $\mathbb{R}^n$. Since homotopy colimits of presheaves are computed pointwise, it suffices to check the corresponding claim for $\Emb_\partial^\s\left(Z \times [0,1)\sqcup\bigsqcup_I\mathbb{R}^n,-\right)$ for every finite set $I$ of cardinality $\leq k$.

Assume first that $Z=\varnothing$. Given a configuration $\{p_i\}_{i\in I}\in \Conf_I(N)$ to serve as a basepoint, consider the commuting diagram \[\begin{tikzcd}
\prod_{i\in I}\Emb_{p_i}^\s(\mathbb{R}^n,N)\ar{r}\ar{d}&\prod_{i\in I} \Map_{\VVec,p_i} (T\mathbb{R}^n,TN)\ar{d}\\
\Emb^\s(\sqcup_I\mathbb{R}^n, N)\ar{d}\ar{r}&E\ar{d}\\
\Conf_I(N)\ar[equal]{r}&\Conf_I(N),
\end{tikzcd}\] where $E=\Map_{\VVec} (T\mathbb{R}^n,TN)^I|_{\Conf_I(N)}$. As in the proof of Lemma \ref{lem:riemannian pullback}, the vertical columns are fibration sequences, and the top map is a weak equivalence, so the middle map is so. The same remarks apply after replacing $N$ by $U$. The claim follows upon observing that the natural map \[\hocolim_{U\in \mathcal{U}}E|_U \lra E\] is a weak equivalence by \cite[4.6]{DuggerIsaksen:THAR}, since the collection $\{\Conf_I(U)\}_{U\in\mathcal{U}}$ is a complete cover of $\Conf_I(N)$ in the sense of \cite[4.5]{DuggerIsaksen:THAR}.

In the general case, consider the commuting diagram
\[\begin{tikzcd}\displaystyle\hocolim_{U\in\mathcal{U}}\Emb_\partial^\s\left(Z \times [0,1)\sqcup\bigsqcup_I\mathbb{R}^n,U\right) \ar{d}\ar{r} &\displaystyle\Emb_\partial^\s\left(Z \times [0,1)\sqcup\bigsqcup_I\mathbb{R}^n,N\right)\ar{d}\\
\displaystyle\hocolim_{U\in\mathcal{U}}\Emb^\s\left(\sqcup_I\mathbb{R}^n,\mathring{U}\right)\ar{r}&\Emb^\s\left(\sqcup_I\mathbb{R}^n,
\mathring{N}\right),
\end{tikzcd}\] where the vertical arrows are induced by restriction. Since the collection $\{\mathring{U}\}_{U\in\mathcal{U}}$ is a complete Weiss $k$-cover of $\mathring{N}$, the bottom arrow is a weak equivalence by the previous case. Since $\Emb^\s_\partial(Z \times [0,1), N)$ is contractible and $N$ is isotopy equivalent to its interior, isotopy extension implies that the righthand map is an equivalence, and the same considerations applied to $U$ show that the lefthand map is as well, implying the claim.
\end{proof}

\begin{remark}\label{rem:homotopy-sheaf}In fact, the map $F \to T_k F$ can be described as homotopy sheafification with respect to Weiss $k$-covers \cite[Theorem 1.2]{BoavidaWeiss:MCHS}.\end{remark}

\subsubsection{Extension-by-identity maps} \label{sec:extend-by-identity} Suppose that $M$, $N$, and $P$, are manifolds with a common boundary $Z$. Then we can form the manifolds $M \cup_{\partial} P$ and $N \cup_{\partial} P$ and construct an extension-by-identity map 
\[\Emb^s_\partial(N,M) \lra \Emb^s(N \cup_{\partial} P,M \cup_{\partial} P).\]

\begin{lemma}There is a dashed map making the following diagram commute up to preferred homotopy
	\[\begin{tikzcd} \Emb^s_\partial(N,M) \rar \dar & \Emb^s(N \cup_{\partial} P,M \cup_{\partial} P) \dar \\
	T_\infty \Emb^s_\partial(N,M) \rar[dashed] & T_\infty\Emb^s(N \cup_{\partial} P,M \cup_{\partial} P).\end{tikzcd}\]
\end{lemma}

\begin{proof}Consider the map
	\[\Emb^s_\partial(-,M) \lra \Emb^s(-\cup_{\partial} P,M \cup_{\partial P} P)\]
	of presheaves on $\DDisk^s_{n,Z}$ induced by extension-by-identity, postcomposed with
	\[\Emb^s(-\cup_{\partial} P,M \cup_{\partial} P) \lra T_\infty\Emb^s(-\cup_{\partial} P,M \cup_{\partial} P).\]
	As the target by construction satisfies descent for complete Weiss $\infty$-covers and $T_\infty \Emb_\partial^s(-,M)$ is the homotopy sheafification of $\Emb_\partial^s(-,M)$ with respect to Weiss $\infty$-covers by Remark \ref{rem:homotopy-sheaf}, this factors essentially uniquely over $T_\infty \Emb_\partial^s(-,M)$. Evaluating at $N$ we get the desired diagram.
\end{proof}

\subsubsection{Proof of Theorem \ref{thm:isotopy ext}} We proceed by applying Lemma \ref{lem:weiss descent} with $k = \infty$ and $F = \mathbb{E}_{M,\partial}^\s$ to a convenient cover. Write $\mathcal{D}_{P \subset N}$ for the collection of open subsets $U$ of $N$ that are disjoint unions of a finite number of open balls in $N \setminus P$ together with a collar neighborhood of $P$. In other words, $U$ is diffeomorphic, relative to $P$, to the manifold
\[\Big(P \cup \partial P \times [0,1)\Big) \sqcup \bigsqcup_I \mathbb{R}^n\]
for some finite set $I$. 

The reader is invited to check that $\mathcal{D}_{P \subset N}$ is a complete Weiss cover of $N$. This cover also has the following pleasant property.

\begin{lemma}\label{lem:dpm contractible}
The poset $\mathcal{D}_{P \subset N}$ is contractible.
\end{lemma}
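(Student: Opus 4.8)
The goal is to show that the poset $\mathcal{D}_{P \subset N}$, consisting of open subsets $U \subseteq N$ which are a disjoint union of finitely many open balls in $N \setminus P$ together with a collar neighborhood of $P$, is contractible. The natural strategy is to exhibit the nerve of this poset as filtered, or more precisely, to show it is directed in a way that makes the classifying space contractible. The key observation is that $\mathcal{D}_{P \subset N}$ is \emph{downward directed}: given two elements $U_0, U_1 \in \mathcal{D}_{P \subset N}$, I would like to find a common refinement $U \in \mathcal{D}_{P \subset N}$ with $U \subseteq U_0 \cap U_1$. This is slightly delicate because $U_0 \cap U_1$ need not itself lie in $\mathcal{D}_{P \subset N}$ — the intersection of two collar neighborhoods of $P$ is again (isotopic to) a collar, but the intersection of the disjoint-ball parts can be complicated. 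However, inside $U_0 \cap U_1$ one can always find a collar of $P$ (taking the germ near $P$) and then finitely many small open balls in $(U_0 \cap U_1) \setminus P$; since $U_0 \cap U_1$ is an open neighborhood of a compact region near $P$ plus the balls, one can choose such a configuration, giving an element of $\mathcal{D}_{P \subset N}$ below both. This shows the poset is cofiltered, hence its nerve is contractible.

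\textbf{Execution.} First I would fix a collar $c \colon \partial P \times [0,1) \hookrightarrow N \setminus \mathring{P}$ and note that every element of $\mathcal{D}_{P \subset N}$ contains, for some $\delta > 0$, the set $P \cup c(\partial P \times [0,\delta))$; shrinking collars gives a cofinal sub-poset on which the ``collar part'' is totally ordered by $\delta$. Next, given $U_0, U_1 \in \mathcal{D}_{P \subset N}$, I would produce a lower bound: pick $\delta$ small enough that the $\delta$-collar lies in both, and then observe that $(U_0 \cap U_1) \setminus (P \cup c(\partial P \times [0,\delta]))$ is an open subset of $N \setminus P$ in which I can choose finitely many disjoint open balls $B_1, \dots, B_k$ — indeed I must check that the disjoint-ball parts of $U_0$ and of $U_1$ can be ``thinned'' to lie in this common region, which follows because any open ball in $N \setminus P$ contains a smaller concentric open ball, and I only need finitely many. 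Taking $U = (P \cup c(\partial P \times [0,\delta))) \sqcup B_1 \sqcup \dots \sqcup B_k$ gives an element of $\mathcal{D}_{P \subset N}$ contained in both $U_0$ and $U_1$. Since a non-empty cofiltered poset has weakly contractible nerve (its classifying space is contractible, e.g. because the identity functor is connected to a constant functor, or by Quillen's Theorem A applied to the inclusion of any object), the claim follows.

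\textbf{Main obstacle.} The genuinely fiddly point is matching up the disjoint-ball parts: an element of $\mathcal{D}_{P \subset N}$ records specific open balls, and when I shrink both $U_0$ and $U_1$ to get a common lower bound I must be careful that the result is still literally a disjoint union of open balls plus a collar, not merely diffeomorphic to one after isotopy. One clean way around this is to work with the cofinal sub-poset of those $U$ for which the collar part is of the standard form $P \cup c(\partial P \times [0,\delta))$ and the balls are geodesic balls (or coordinate balls in fixed charts) of small radius; within such a sub-poset the refinement argument is transparent: shrink $\delta$ and shrink radii. Passing to a cofinal sub-poset does not change the homotopy type of the nerve, so this suffices. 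I expect the total argument to be short once this bookkeeping is set up; no estimate or hard input beyond compactness of $P$ and the existence of collars is needed.
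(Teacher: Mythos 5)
Your approach is correct but routes through a direct cofiltering argument, whereas the paper uses an adjoint-functor retraction. Specifically, the paper defines the subposet $\mathcal{C}_{P\subset N} \subseteq \mathcal{D}_{P\subset N}$ of connected objects (collar neighborhoods alone), exhibits the assignment sending $U$ to the component of $U$ containing $P$ as a right adjoint to the inclusion $\mathcal{C}_{P\subset N} \hookrightarrow \mathcal{D}_{P\subset N}$ --- hence a weak equivalence on nerves --- and then observes that $\mathcal{C}_{P\subset N}$ is cofiltered. You instead argue directly that $\mathcal{D}_{P\subset N}$ itself is cofiltered. Both proofs hinge on the same compactness observation, namely that any open neighborhood of the compact $P$ contains a standard collar $P \cup c(\partial P \times [0,\delta))$, so both are valid; yours arrives at contractibility in one step rather than two.

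One caveat about your execution: the paragraph labelled ``Main obstacle'' and the attendant ball-thinning create the appearance of a difficulty where there is none. You do not need to preserve the ball components of $U_0$ and $U_1$ in the common lower bound, and in general cannot, since those components may be entirely disjoint from one another, so there is no ``common concentric sub-ball'' to shrink to. But the definition of $\mathcal{D}_{P\subset N}$ permits the indexing set $I$ to be empty, so the small collar $P \cup c(\partial P \times [0,\delta))$ alone, taken with $\delta$ small enough that it lies in $U_0 \cap U_1$, is already an element of $\mathcal{D}_{P\subset N}$ and a common lower bound. With this simplification your argument reduces to a one-line verification of cofilteredness, and the bookkeeping you anticipated disappears.
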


\begin{proof}
Let $\mathcal{C}_{P\subset N}\subseteq \mathcal{D}_{P \subset N}$ denote the full subposet spanned by the objects so that the inclusion $P \hookrightarrow U$ is $0$-connected, i.e., an object of $\mathcal{C}_{P\subset N}$ is simply a collar neighborhood of $P$. A retraction and right adjoint to the inclusion of this subcategory is obtained by sending $U$ to the component of $U$ containing $P$. The claim now follows upon noting that $\mathcal{C}_{P \subset N}$ is contractible, being cofiltered.
\end{proof}

\begin{remark}
By adapting \cite[\S 5.5.2]{Lurie:HA}, something much stronger can be shown, namely that $\mathcal{D}_{P \subset N}$ is final in a sifted $\infty$-category.
\end{remark}

We now prove the isotopy extension theorem.

\begin{proof}[Proof of Theorem \ref{thm:isotopy ext}]
Suppose first that $\mr{hdim}(P) \leq \dim(M)-3$. Restricting to $U\in \mathcal{D}_{P \subset N}$ induces the commuting diagram \[\adjustbox{scale=.85,center}{\begin{tikzcd} 
T_\infty \mr{Emb}^s_\partial(N \setminus \mathring{P},M \setminus \mathring{P})\ar{r}{(1)} \dar&[-10pt]\displaystyle\holim_{U\in\mathcal{D}_{P \subset N}}T_\infty\mr{Emb}_\partial^s(U\setminus \mathring{P},M \setminus \mathring{P})\ar{d}&[-10pt] \displaystyle\holim_{U\in\mathcal{D}_{P \subset N}}\mr{Emb}_\partial^s(U\setminus \mathring{P},M \setminus \mathring{P})\ar{d}\ar{l}[swap]{(4)}\\
 T_\infty \mr{Emb}^s(N,M)\ar{r}{(2)} \dar& \displaystyle\holim_{U\in\mathcal{D}_{P \subset N}}T_\infty\mr{Emb}^s(U,M)\ar{d}& \displaystyle\holim_{U\in\mathcal{D}_{P \subset N}}\mr{Emb}^s(U,M)\ar{d}\ar{l}[swap]{(5)} \\
T_\infty\mr{Emb}^s(P,M)\ar{r}{(3)}&\displaystyle\holim_{U\in\mathcal{D}_{P \subset N}}T_\infty\mr{Emb}^s(P,M)&\displaystyle\holim_{U\in\mathcal{D}_{P \subset N}}\mr{Emb}^s(P,M)\ar{l}[swap]{(6)}.
\end{tikzcd}}\] 
where the top vertical maps are given by extension-by-identity and the bottom vertical maps by restriction to $P$. 

For each $U \in \mathcal{D}_{P \subset N}$, the rightmost column is a fibration sequence by the usual isotopy extension theorem. Since all have $e \colon P \hookrightarrow M$ as a basepoint, it remains a fibration sequence after taking homotopy limits. The claim will follow upon verifying that each of the numbered arrows is a weak equivalence. For maps (1) and (2) this follows from Lemma \ref{lem:weiss descent} applied with $k=\infty$ and $F = \smash{\mathbb{E}_{M \setminus \mathring{P},\partial}^\s}$ or $F = \smash{\mathbb{E}_{M}^\s}$ respectively, for (3) from Lemma \ref{lem:dpm contractible}, for (5) and (6) from Theorem \ref{thm:gkw} and our assumption on $P$, and for (4) from the Yoneda lemma. 

\medskip

The only modification in the case $P=\sqcup_I\mathbb{R}^n$ is for the sixth arrow, which is now an equivalence by the Yoneda lemma.
\end{proof}

\subsection{Applications of isotopy extension}

We now give some applications of Theorem \ref{thm:isotopy ext}.

\subsubsection{Rephrasing Question \ref{ques:exotic}} Let $\Sigma$ and $\Sigma'$ be exotic $n$-spheres. Fixing disks $D^n \subseteq \Sigma,\Sigma'$, we write $D_\Sigma \coloneqq \Sigma \setminus \mathring{D}^n$ for the corresponding exotic disk with boundary identified with $\partial D^n$, and similarly for $D_{\Sigma'}$.

\begin{corollary}\label{cor:disk sequence}
There is a fibration sequence sequence \[T_\infty\Emb^\s_\partial(D^n_\Sigma,D^n_{\Sigma'}) \lra T_\infty\Emb^\s(\Sigma,\Sigma') \lra \mr{O}(n+1)\]
with fibre taken over the identity.
\end{corollary}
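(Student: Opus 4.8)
The plan is to apply Theorem \ref{thm:isotopy ext} with $N = \Sigma$, $M = \Sigma'$, and $P = D^n \subseteq \Sigma$ the chosen disk. Since $P = D^n$ is one of the allowed cases in the hypothesis of Theorem \ref{thm:isotopy ext} (it is $\sqcup_I D^n$ with $|I| = 1$), we obtain a homotopy Cartesian square
\[\begin{tikzcd} T_\infty \mr{Emb}^s_\partial(\Sigma \setminus \mathring{D}^n, \Sigma' \setminus \mathring{D}^n) \rar \dar & T_\infty \mr{Emb}^s(\Sigma, \Sigma') \dar \\ \ast \rar & T_\infty\mr{Emb}^s(D^n, \Sigma'), \end{tikzcd}\]
where the bottom map is induced by the chosen embedding $D^n \hookrightarrow \Sigma'$. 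By definition $\Sigma \setminus \mathring{D}^n = D_\Sigma$ and $\Sigma' \setminus \mathring{D}^n = D_{\Sigma'}$, so the top-left corner is $T_\infty\Emb^\s_\partial(D^n_\Sigma, D^n_{\Sigma'})$ as desired, and the homotopy Cartesian square says precisely that $T_\infty\Emb^\s_\partial(D^n_\Sigma, D^n_{\Sigma'})$ is the homotopy fiber of the right-hand vertical map over the point determined by $e$.

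It remains to identify $T_\infty\mr{Emb}^s(D^n, \Sigma')$ with $\mr{O}(n+1)$. First, by the remark following Theorem \ref{thm:isotopy ext} (using that $D^n$ is isotopy equivalent to $\mathbb{R}^n$, or directly that $\mr{hdim}(D^n) = 0 \leq n - 3$ once $n \geq 3$, which holds since $\Sigma$ is an exotic sphere), Theorem \ref{thm:gkw} gives a weak equivalence $T_\infty\mr{Emb}^s(D^n, \Sigma') \simeq \Emb^\s(\mathring{D}^n, \Sigma') \simeq \Emb^\s(\mathbb{R}^n, \Sigma')$. Next, by Lemma \ref{lem:smooth germ} (the top-to-bottom-left composite), $\Emb^\s(\mathbb{R}^n, \Sigma')$ evaluated appropriately is weakly equivalent to the frame bundle of $\Sigma'$; concretely, restricting an embedding to its $1$-jet at the origin gives a weak equivalence $\Emb^\s(\mathbb{R}^n, \Sigma') \simeq \mr{Fr}(T\Sigma')$, the total space of the $\mr{GL}(n)$-frame bundle, which deformation retracts (choosing a metric) onto the $\mr{O}(n)$-frame bundle. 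Since $\Sigma'$ is homeomorphic to $S^n$ and in particular has stably trivial and in fact honestly trivial tangent bundle (every homotopy sphere is parallelizable in the relevant range — indeed $\Theta_n$-spheres are stably parallelizable, and for the orthonormal frame bundle what we need is a trivialization, which follows from $T\Sigma' \oplus \varepsilon^1 \cong \varepsilon^{n+1}$ together with the fact that $\Sigma'$ bounds a parallelizable manifold), the frame bundle $\mr{Fr}_{\mr{O}(n)}(T\Sigma')$ is homotopy equivalent to $\Sigma' \times \mr{O}(n)$... but this is not quite $\mr{O}(n+1)$. The cleaner route: $\Emb^\s(\mathbb{R}^n, \Sigma')$, where we also record the germ of the embedding and use that $\Sigma'$ is a homotopy sphere, is equivalent to the space of embeddings $\mathbb{R}^n \hookrightarrow S^n$ (tangent data being insensitive to exotic structure, as the first stage of the tower only sees the topological — indeed linear — tangent bundle), and $\Emb^\s(\mathbb{R}^n, S^n) \simeq \mr{O}(n+1)$ since $S^n = \partial D^{n+1}$ and an embedded chart in $S^n$ is, up to contractible choice, the exponential chart at a point determined by an orthonormal frame, i.e. an element of $\mr{Fr}_{\mr{O}(n+1)}(TS^n)|_{\text{something}}$; more simply, $\Emb^\s(\mathbb{R}^n, S^n) \simeq \Emb^\s(\mathbb{R}^n, \mathbb{R}^{n+1})\big/\{\text{point at infinity}\}$-considerations give $\mr{O}(n+1)$ by the same argument used in the proof of the Schoenflies proposition above, where $\Emb^{\s,+}(\mathbb{R}^4, S^4) \simeq \mr{SO}(5)$ is asserted via Lemma \ref{lem:smooth germ}.

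Thus the main obstacle — and the only step requiring care — is the identification $T_\infty\Emb^\s(D^n, \Sigma') \simeq \mr{O}(n+1)$, which splits into (a) reducing $T_\infty$ to the honest embedding space via convergence (Theorem \ref{thm:gkw}), (b) reducing the embedding space of a disk to tangential/frame data via Lemma \ref{lem:smooth germ}, and (c) the clean geometric fact that $\Emb^\s(\mathbb{R}^n, S^n) \simeq \mr{O}(n+1)$, using that $S^n$ bounds the disk $D^{n+1}$ so that an embedded chart extends, up to contractible choice, to a chart at a point of $S^n \subseteq \mathbb{R}^{n+1}$ together with an orthonormal frame there — equivalently, the derivative map $\Emb^\s(\mathbb{R}^n, S^n) \to \mr{Fr}_{\mr{O}(n)}(TS^n)$ is a weak equivalence and $\mr{Fr}_{\mr{O}(n)}(TS^n) = \mr{O}(n+1)$ (the orthonormal frame bundle of the round $n$-sphere is the full orthogonal group $\mr{O}(n+1)$, with the fibration $\mr{O}(n) \to \mr{O}(n+1) \to S^n$). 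One should note that the exotic structure on $\Sigma'$ is irrelevant here precisely because only the $1$-jet enters, and $\Sigma'$ is homeomorphic — hence, by smoothing theory in this low-stage regime, tangentially equivalent — to $S^n$; alternatively one argues directly that $\Emb^\s(\mathbb{R}^n, \Sigma') \simeq \mr{Fr}_{\mr{O}(n)}(T\Sigma')$ and that $\mr{Fr}_{\mr{O}(n)}(T\Sigma') \simeq \mr{O}(n+1)$ since $\Sigma'$ is stably parallelizable and $n$-dimensional. Assembling (a)--(c) with the homotopy Cartesian square gives the asserted fiber sequence, with fiber over the identity self-embedding of the disk.
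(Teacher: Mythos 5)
Your overall structure matches the paper's proof: apply Theorem \ref{thm:isotopy ext} with $N=\Sigma$, $M=\Sigma'$, $P=D^n$, identify the top-left corner with $T_\infty\Emb^\s_\partial(D^n_\Sigma,D^n_{\Sigma'})$, use the remark after Theorem \ref{thm:isotopy ext} together with Theorem \ref{thm:gkw} and Lemma \ref{lem:smooth germ} to reduce $T_\infty\Emb^\s(D^n,\Sigma')$ to the orthonormal frame bundle of $T\Sigma'$, and observe that this is $\mr{O}(n+1)$. That is the right argument, and your ``cleaner route'' and the $\mr{O}(n)\to\mr{O}(n+1)\to S^n$ fibration are exactly what is needed.

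However, the detour in the middle contains genuine errors that you should excise. The claim that $\Sigma'$ ``has stably trivial and in fact honestly trivial tangent bundle'' is false for most $n$: $TS^n$ is trivial only for $n\in\{1,3,7\}$, and the same is true for any homotopy $n$-sphere. The parenthetical justification is also invalid --- $T\Sigma'\oplus\varepsilon^1\cong\varepsilon^{n+1}$ does \emph{not} imply $T\Sigma'\cong\varepsilon^n$ (e.g.\ $S^2$), and ``bounds a parallelizable manifold'' gives stable triviality, not triviality. Consequently the frame bundle is \emph{not} $\Sigma'\times\mr{O}(n)$, and indeed $S^n\times\mr{O}(n)\not\simeq\mr{O}(n+1)$ in general; you notice this (``not quite $\mr{O}(n+1)$'') and then course-correct, but the stray wrong claims should be removed. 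Likewise, your final justification --- that $\mr{Fr}_{\mr{O}(n)}(T\Sigma')\simeq\mr{O}(n+1)$ ``since $\Sigma'$ is stably parallelizable and $n$-dimensional'' --- does not follow from those hypotheses alone. The fact you actually need, and should state cleanly, is that $T\Sigma'\cong TS^n$ as rank-$n$ vector bundles; this is a theorem (the paper cites Burghelea--Lashof, Prop.\ 5.4(iv)), not an immediate consequence of stable parallelizability. Once $T\Sigma'\cong TS^n$ is in hand, $\mr{Fr}_{\mr{O}(n)}(T\Sigma')\cong\mr{Fr}_{\mr{O}(n)}(TS^n)\cong\mr{O}(n+1)$ via the orthogonal fibration, and the proof is complete.
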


\begin{proof}
We apply Theorem \ref{thm:isotopy ext} with $N=\Sigma$, $M=\Sigma'$, and $P=D^n$. The tangent bundle of an exotic sphere is isomorphic to that of the standard sphere (a well-known consequence of \cite[Prop.~5.4 (iv)]{BurgheleaLashof:HSD}), so $\mr{Emb}^s(D^n,\Sigma')$ is weakly equivalent to the orthogonal frame bundle of $TS^n$, which is homeomorphic to $\mr{O}(n+1)$.
\end{proof}

To connect to results about the groups $\Theta_n$, we consider a version of Question \ref{ques:exotic} for oriented exotic $n$-spheres and orientation-preserving embeddings. This question is essentially equivalent: given two oriented exotic $n$-spheres $\Sigma,\Sigma'$ then $T_\infty \Emb^{\s}(\Sigma,\Sigma')$ contains an element which reverses orientation (this is well-defined since $T_\infty$ maps to $T_1$, given by bundle maps) if and only if $T_\infty \Emb^{\s,+}(\Sigma,\smash{\overline{\Sigma}'}) \neq \varnothing$, where $\smash{\overline{\Sigma}'}$ denotes $\Sigma'$ with opposite orientation. As before, we use a superscript $+$ to denote orientation-preserving embeddings.

\begin{corollary}\label{cor:disk-vs-sphere} Let $\Sigma$ and $\Sigma'$ be oriented exotic $n$-spheres, then $T_\infty \Emb^{\s,+}(\Sigma,\Sigma')$ is non-empty if and only if $T_\infty \Emb^{\s}_\partial(D^n_\Sigma,D^n_{\Sigma'})$ is non-empty.\end{corollary}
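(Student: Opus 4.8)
The plan is to derive Corollary \ref{cor:disk-vs-sphere} directly from the fiber sequence of Corollary \ref{cor:disk sequence}, using the fact that $\mr{O}(n+1)$ has a section after passing to the orientation-preserving part. First I would restrict the fiber sequence
\[T_\infty\Emb^\s_\partial(D^n_\Sigma,D^n_{\Sigma'}) \lra T_\infty\Emb^\s(\Sigma,\Sigma') \lra \mr{O}(n+1)\]
to the orientation-preserving components on the base and total space, obtaining a fiber sequence
\[T_\infty\Emb^\s_\partial(D^n_\Sigma,D^n_{\Sigma'}) \lra T_\infty\Emb^{\s,+}(\Sigma,\Sigma') \lra \mr{SO}(n+1),\]
with the fiber unchanged since the fiber over the identity already lies over the identity component. (Here I use that the map $T_\infty\Emb^\s(\Sigma,\Sigma') \to \mr{O}(n+1)$ factors through $T_1$, which is given by bundle maps, so ``orientation-preserving'' makes sense on the total space and the fiber sequence is compatible with this decomposition; I would spell out that $T_\infty\Emb^{\s,+}$ is by definition the preimage of $\mr{SO}(n+1)$.)

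Next I would observe that $\mr{SO}(n+1)$ is non-empty and connected, so the long exact sequence of the fibration — or simply the fact that a fibration over a non-empty base with non-empty fiber has non-empty total space, and conversely a non-empty total space surjects onto $\pi_0$ of a connected base — gives the desired equivalence. Concretely: if $T_\infty\Emb^\s_\partial(D^n_\Sigma,D^n_{\Sigma'})$ is non-empty, then since $\mr{SO}(n+1)\neq\varnothing$ the total space $T_\infty\Emb^{\s,+}(\Sigma,\Sigma')$ is non-empty (a fibration with non-empty fiber over a non-empty base); conversely, if $T_\infty\Emb^{\s,+}(\Sigma,\Sigma')$ is non-empty, then so is its fiber over any point in the connected base $\mr{SO}(n+1)$, and since all fibers of a fibration over a connected base are weakly equivalent, the fiber over the identity — which is $T_\infty\Emb^\s_\partial(D^n_\Sigma,D^n_{\Sigma'})$ — is non-empty.

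The only subtlety, and the step I would be most careful about, is legitimizing the passage to the orientation-preserving subspace: one must check that $T_\infty\Emb^{\s,+}(\Sigma,\Sigma')$ really is the full preimage of $\mr{SO}(n+1)\subseteq\mr{O}(n+1)$ under the map of Corollary \ref{cor:disk sequence}, i.e. that the map $T_\infty\Emb^\s(\Sigma,\Sigma')\to\mr{O}(n+1)$ records exactly the orientation behavior of an element via its image in $T_1$. This is essentially the content of the parenthetical remark already made in the paper preceding the corollary (``this is well-defined since $T_\infty$ maps to $T_1$, given by bundle maps''), so the argument is short; everything else is a formal consequence of the fibration and the connectedness of $\mr{SO}(n+1)$. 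I do not expect any genuine obstacle here — the work has all been done in Corollary \ref{cor:disk sequence}.
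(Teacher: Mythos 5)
Your proof takes the same route as the paper: derive the orientation-preserving version of the fiber sequence from Corollary \ref{cor:disk sequence} and then read off non-emptiness from the connectedness of $\mr{SO}(n+1)$. The paper's proof simply states the oriented fiber sequence and says the claim ``follows directly''; you have supplied the two short $\pi_0$-level observations (non-empty fiber implies non-empty total space, and all fibers over a connected base are weakly equivalent) and the check that orientation behavior is compatible with restriction to the disk, all of which are correct and precisely the content the paper elides.
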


\begin{proof}This follows directly from the oriented version of the fibration sequence in Corollary \ref{cor:disk sequence}:
\[T_\infty\Emb^\s_\partial(D^n_\Sigma,D^n_{\Sigma'}) \lra T_\infty\Emb^{\s,+}(\Sigma,\Sigma') \lra \mr{SO}(n+1).\qedhere\]	
\end{proof}


Let us define a relation on $\Theta_n$ by saying 
\[[\Sigma] \sim_{T_\infty} [\Sigma'] \quad \Longleftrightarrow \quad T_\infty \Emb^{\s,+}(\Sigma,\Sigma') \neq \varnothing.\]

\begin{lemma}This is an equivalence relation, and compatible with addition on $\Theta_n$.\end{lemma}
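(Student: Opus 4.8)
The statement asserts that $\sim_{T_\infty}$ is an equivalence relation on $\Theta_n$ and is compatible with the group structure, meaning that if $[\Sigma_1]\sim_{T_\infty}[\Sigma_1']$ and $[\Sigma_2]\sim_{T_\infty}[\Sigma_2']$ then $[\Sigma_1 \# \Sigma_2]\sim_{T_\infty}[\Sigma_1'\#\Sigma_2']$. The plan is to deduce everything from the formal structure of $T_\infty\Emb^\s(-,-)$, namely functoriality and the existence of associative, unital composition maps (properties (i)--(iii) recorded after Definition \ref{def:limit of tower}), together with Corollary \ref{cor:disk-vs-sphere}, which translates non-emptiness of $T_\infty\Emb^{\s,+}(\Sigma,\Sigma')$ into non-emptiness of $T_\infty\Emb^\s_\partial(D^n_\Sigma,D^n_{\Sigma'})$.

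\textit{Equivalence relation.} Reflexivity is immediate: $T_\infty\Emb^{\s,+}(\Sigma,\Sigma)$ contains the image of $\id_\Sigma$. For transitivity, suppose $[\Sigma]\sim_{T_\infty}[\Sigma']$ and $[\Sigma']\sim_{T_\infty}[\Sigma'']$; picking points in $T_\infty\Emb^{\s,+}(\Sigma,\Sigma')$ and $T_\infty\Emb^{\s,+}(\Sigma',\Sigma'')$ and applying the composition map
\[T_\infty\Emb^{\s,+}(\Sigma,\Sigma')\times T_\infty\Emb^{\s,+}(\Sigma',\Sigma'')\lra T_\infty\Emb^{\s,+}(\Sigma,\Sigma'')\]
produces a point in the target, so $[\Sigma]\sim_{T_\infty}[\Sigma'']$. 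Symmetry is the one point requiring a small argument: a priori a point of $T_\infty\Emb^{\s,+}(\Sigma,\Sigma')$ need not be homotopy invertible. I would argue instead through the disk reformulation: by Corollary \ref{cor:disk-vs-sphere}, $[\Sigma]\sim_{T_\infty}[\Sigma']$ iff $T_\infty\Emb^\s_\partial(D^n_\Sigma,D^n_{\Sigma'})\neq\varnothing$, and then observe that this relation on exotic disks is symmetric because forming connected sums of both sides with $\overline{D^n_{\Sigma}}$ (or, more cleanly, using that $D^n_{\Sigma'}$ glued to $D^n_{\overline{\Sigma}'}$ along the boundary is the standard sphere up to the relevant equivalence) exhibits the reverse comparison; alternatively, observe that the bottom map in the fiber sequence of Corollary \ref{cor:disk sequence} admits a section through the orthogonal frame bundle so that any element of $T_\infty\Emb^{\s,+}(\Sigma,\Sigma')$ can be composed with an inverse orthogonal frame change to land in the fiber, and then invoke that connected sum of exotic disks realizes inverses in $\Theta_n$. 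Concretely: if $D^n_\Sigma \to D^n_{\Sigma'}$ exists at the $T_\infty$ level then gluing in the boundary collar and using that $T_\infty$ composition is unital and that $\Sigma' \# \overline{\Sigma}'$ is standard, we obtain the reverse comparison.

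\textit{Compatibility with addition.} Connected sum of oriented $n$-spheres can be performed by removing a disk from each and gluing along the boundary sphere; at the level of the punctured manifolds this is exactly the setup of Theorem \ref{thm:isotopy ext} and Corollary \ref{cor:disk-vs-sphere}, so it is cleanest to work with exotic disks. Given $T_\infty$-comparisons $D^n_{\Sigma_1}\to D^n_{\Sigma_1'}$ and $D^n_{\Sigma_2}\to D^n_{\Sigma_2'}$, I would use functoriality of $T_\infty\Emb^\s_\partial$ under gluing: disjoint union followed by the boundary-gluing defines, via the monoidal structure on $\MMfld^\s$ and its module variants, a map
\[T_\infty\Emb^\s_\partial(D^n_{\Sigma_1},D^n_{\Sigma_1'})\times T_\infty\Emb^\s_\partial(D^n_{\Sigma_2},D^n_{\Sigma_2'})\lra T_\infty\Emb^\s_\partial(D^n_{\Sigma_1}\natural D^n_{\Sigma_2},\, D^n_{\Sigma_1'}\natural D^n_{\Sigma_2'}),\]
where $\natural$ is boundary connected sum; since $D^n_{\Sigma_1}\natural D^n_{\Sigma_2}\cong D^n_{\Sigma_1\#\Sigma_2}$, feeding in our two chosen points gives a point of $T_\infty\Emb^\s_\partial(D^n_{\Sigma_1\#\Sigma_2},D^n_{\Sigma_1'\#\Sigma_2'})$, hence $[\Sigma_1\#\Sigma_2]\sim_{T_\infty}[\Sigma_1'\#\Sigma_2']$ by Corollary \ref{cor:disk-vs-sphere}.

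\textit{Main obstacle.} The routine parts are reflexivity, transitivity, and — once one accepts the disk reformulation — compatibility with addition. The genuinely delicate point is symmetry, precisely because we do not know that arbitrary elements of $\pi_0$ of these limit mapping spaces are invertible (this is exactly the open question flagged after Theorem \ref{thm:gkw}). I expect the cleanest route is to phrase symmetry entirely in terms of non-emptiness of $T_\infty\Emb^\s_\partial(D^n_\Sigma,D^n_{\Sigma'})$ and exploit that $bP$-type boundary connected sum relations let one build the reverse comparison from the given one together with a standard (hence canonically comparable) piece, so that no invertibility of a single Taylor-tower element is ever required — only composition of non-empty comparison spaces. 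Making the bookkeeping of orientations and boundary identifications precise in this gluing argument is where most of the care will go.
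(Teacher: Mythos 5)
Your overall architecture matches the paper's: reduce via Corollary \ref{cor:disk-vs-sphere} to the question of when $T_\infty\Emb^\s_\partial(D^n_\Sigma,D^n_{\Sigma'})$ is nonempty, use boundary connected sum with a fixed exotic disk to shift things around, and handle reflexivity and transitivity by composition. You also correctly flag symmetry as the genuinely nontrivial point, since elements of $\pi_0 T_\infty\Emb$ are not known to be invertible. But precisely there, your argument has a gap.

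Boundary connected sum with a fixed disk (and its inverse) only gives equivalences of the form
\[
T_\infty\Emb^\s_\partial(D^n_\Sigma,D^n_{\Sigma'}) \neq \varnothing
\quad\Longleftrightarrow\quad
T_\infty\Emb^\s_\partial(D^n_{\Sigma\#\overline{\Sigma}'},D^n) \neq \varnothing,
\]
and, applied to the swapped pair,
\[
T_\infty\Emb^\s_\partial(D^n_{\Sigma'},D^n_{\Sigma}) \neq \varnothing
\quad\Longleftrightarrow\quad
T_\infty\Emb^\s_\partial(D^n_{\Sigma'\#\overline{\Sigma}},D^n) \neq \varnothing.
\]
The two spheres $\Sigma\#\overline{\Sigma}'$ and $\Sigma'\#\overline{\Sigma}$ are orientation-reverses of one another, so symmetry reduces to showing that nonemptiness of $T_\infty\Emb^\s_\partial(D^n_{\Sigma''},D^n)$ is insensitive to replacing $\Sigma''$ by $\overline{\Sigma}''$. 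Your appeals to ``$\Sigma'\#\overline{\Sigma}'$ is standard'' and to unitality do not supply this: if you chase the boundary connected sums you keep landing back on the forward comparison (e.g.\ $D^n_\Sigma \to D^n_{\Sigma'}$, or $D^n_{\overline{\Sigma}'} \to D^n_{\overline{\Sigma}}$), never the reverse. The paper's missing move is to pass back to spheres and reverse orientation on \emph{both} domain and target, giving the tautological identification $T_\infty\Emb^{\s,+}(\Sigma'',S^n) \cong T_\infty\Emb^{\s,+}(\overline{\Sigma}'',\overline{S^n})$, and then use that $S^n$ admits an orientation-reversing self-diffeomorphism to identify $\overline{S^n}\cong S^n$. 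Without this explicit step, your symmetry argument does not close.

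Your treatment of compatibility differs slightly from the paper's: you propose to boundary-connected-sum two independent $T_\infty$-comparisons, which requires a bifunctorial (monoidal) boundary connected sum on $T_\infty\Emb^\s_\partial$. The paper instead only ever glues a given $T_\infty$-comparison with the identity on a fixed $D^n_{\Sigma''}$, which is a more elementary functoriality statement and yields $[\Sigma]\sim_{T_\infty}[\Sigma']\Leftrightarrow [\Sigma]+[\Sigma'']\sim_{T_\infty}[\Sigma']+[\Sigma'']$; combined with transitivity this is equivalent to your bilinear version. Both are fine; the paper's is leaner.
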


\begin{proof}It is easy to see it is reflexive and transitive, so remains to be proven symmetric. To do so, we claim that $T_\infty \Emb^{\s,+}(\Sigma,\Sigma') \neq \varnothing$ if and only if $T_\infty \Emb^{\s,+}(\Sigma \# \overline{\Sigma}',S^n) \neq \varnothing$; Using the previous corollary, the statement is equivalent to 
\[T_\infty \Emb^{\s}_\partial(D^n_{\Sigma},D^n_{\Sigma'}) \neq \varnothing \quad \Longleftrightarrow \quad T_\infty \Emb^{\s}_\partial(D^n_{\Sigma \# \overline{\Sigma}'},D^n) \neq \varnothing.\]
This follows from the fact that the operation of boundary connected sum with $D^n_{\overline{\Sigma}'}$, which is an instance of extension-by-the-identity, induces a map
	\[T_\infty \Emb^{\s}_\partial(D^n_{\Sigma},D^n_{\Sigma'}) \lra  T_\infty \Emb^{\s}_\partial(D^n_{\Sigma \# \overline{\Sigma}'},D^n)\]
	with homotopy inverse given by the boundary connected sum with $D^n_{\Sigma'}$. For symmetry, we use that by reversing orientations on both the domain and target, $T_\infty \Emb^{\s,+}(\Sigma \# \overline{\Sigma}',S^n) \neq \varnothing$ if and only if $T_\infty \Emb^{\s,+}(\overline{\Sigma} \# \Sigma',\overline{S^n}) \neq \varnothing$, and that $S^n$ has an orientation-reversing self-diffeomorphism.
	
	\smallskip

	To prove $\sim_{T_\infty}$ is compatible with the addition in $\Theta_n$, we argue as follows. By taking boundary connected sum with $D^n_{\Sigma''}$ or $D^n_{\overline{\Sigma}''}$ we obtain that $T_\infty \Emb^{\s}_\partial(D^n_{\Sigma},D^n_{\Sigma'}) \neq \varnothing$ if and only $T_\infty \Emb^{\s}_\partial(D^n_{\Sigma \# \Sigma''},D^n_{\Sigma' \# \Sigma''}) \neq \varnothing$, so
\[[\Sigma] \sim_{T_\infty} [\Sigma'] \quad \Longleftrightarrow \quad [\Sigma]+[\Sigma''] \sim_{T_\infty} [\Sigma']+[\Sigma''].\qedhere\]
\end{proof}

\begin{example}For $\Sigma$ as in Theorem \ref{athm:spheres} we also have that $T_\infty \Emb^\s(S^n,\Sigma) = \varnothing$.\end{example}

\begin{example}The subset $\{[\Sigma] \in \Theta_n \mid [\Sigma] \sim_{T_\infty} [S^n]\}$ is a subgroup.\end{example}

The results of \cite{BoavidaWeiss:SSMCC} shed some light on the space $T_\infty\Emb^\s_\partial(D^n_\Sigma,D^n_{\Sigma'})$. Their statement involves the operad $\mathbb{E}_n$ of little $n$-disks and its derived automorphisms.

\begin{proposition}\label{prop:tinfty disks}
There is a fibration sequence \[T_\infty\Emb^\s_\partial(D^n_\Sigma,D^n_{\Sigma'}) \lra X \lra  X'.\]
with $X$ an $\Omega^n \mr{O}(n)$-torsor and $X'$ an $\Omega^n \mr{Aut}^h(\mathbb{E}_n)$-torsor with preferred basepoint.
\end{proposition}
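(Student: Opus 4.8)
The plan is to deduce this from the comparison results of Boavida--Weiss \cite{BoavidaWeiss:SSMCC}, the fiber sequence of Corollary \ref{cor:disk sequence}, and the classical fact (already used in the proof of that corollary) that an exotic sphere has standard tangent bundle.

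First I would recall from \cite{BoavidaWeiss:SSMCC} that $T_\infty\Emb^\s(-,-)$, and its variant rel boundary, is computed by derived mapping spaces of configuration categories lying over $\mr{con}(\mathbb{R}^n)$, that $\mr{con}(\mathbb{R}^n)$ presents the little $n$-disks operad $\mathbb{E}_n$, and that its space of derived self-equivalences is $\mr{Aut}^h(\mathbb{E}_n)$, into which $\mr{O}(n)$ maps by rotating $\mathbb{R}^n$. The decisive geometric observation is that $\Sigma\setminus\{\mr{pt}\}$ is diffeomorphic to $\mathbb{R}^n$, so that the configuration category of $\mathring{D}^n_\Sigma$ is $\mr{con}(\mathbb{R}^n)$ regardless of $\Sigma$; the entire dependence on the exotic structure is concentrated in the way this configuration category is attached to the boundary sphere, which in the Boavida--Weiss formalism is recorded by a framing datum valued in $\mr{O}(n)$ (legitimate since $D^n_\Sigma$, being contractible, is parallelizable with a boundary framing pinned down by the collar). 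Applying this to $N=\Sigma$, $M=\Sigma'$ and comparing with the fiber sequence
\[T_\infty\Emb^\s_\partial(D^n_\Sigma,D^n_{\Sigma'})\lra T_\infty\Emb^\s(\Sigma,\Sigma')\lra \mr{O}(n+1)\]
of Corollary \ref{cor:disk sequence} isolates the contribution of the standard disk $D^n$: the first Taylor stage is $T_1\Emb^\s_\partial(D^n_\Sigma,D^n_{\Sigma'})\simeq\Omega^n\mr{O}(n)$ — the $n$-fold loops appearing because $D^n/\partial D^n\simeq S^n$, and independent of $\Sigma$ because exotic spheres have isomorphic tangent bundles — and the tower presents $T_\infty\Emb^\s_\partial(D^n_\Sigma,D^n_{\Sigma'})$ as the homotopy fiber of a map $\Omega^n\mr{O}(n)\to\Omega^n\mr{Aut}^h(\mathbb{E}_n)$ obtained by looping $\mr{O}(n)\to\mr{Aut}^h(\mathbb{E}_n)$, the fiber being taken over the point determined by $[\Sigma]-[\Sigma']\in\Theta_n$ (via its image in $\pi_n(\mr{Top}(n)/\mr{O}(n))$); rotating this fiber sequence yields the statement. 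In particular the fiber is empty precisely when that point is not hit on $\pi_0$, which is compatible with the relation $\sim_{T_\infty}$ and the examples above. One could equivalently pass to formally smooth manifolds via the boundary version of Theorem \ref{thm:independence} and absorb the exotic structures into the element of $\pi_n(\mr{Top}(n)/\mr{O}(n))$ classifying the relative formal smoothing.

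The main obstacle is making the previous paragraph precise: one must match, on the nose, the boundary framing datum occurring in the Boavida--Weiss description of $T_\infty\Emb^\s_\partial(D^n_\Sigma,D^n_{\Sigma'})$ with the canonical map $\mr{O}(n)\to\mr{Aut}^h(\mathbb{E}_n)$, and verify that the exotic smooth structures enter this picture only through a single basepoint (equivalently, a twist) in $\Omega^n\mr{Aut}^h(\mathbb{E}_n)$ and in no other way. Concretely this amounts to unwinding the definition of the configuration category of a manifold with boundary and tracking the naturality of the Boavida--Weiss comparison map through the decomposition $\Sigma=D^n_\Sigma\cup_{S^{n-1}}D^n$; granting this, the identification of $T_1$ with $\Omega^n\mr{O}(n)$ and the rotation of the resulting fiber sequence are formal.
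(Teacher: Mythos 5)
Your proposal heads in the right direction — invoke Boavida--Weiss and identify both terms using parallelizability of the exotic disks — but it does not actually supply the key input and in one place over-claims what the argument gives.

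The precise ingredient the paper uses, and which your sketch leaves as an acknowledged "main obstacle," is the homotopy Cartesian square of \cite[Thm.~1.1]{BoavidaWeiss:SSMCC} (with the boundary modifications of their Section~6):
\[
\begin{tikzcd}
T_\infty\Emb^\s_\partial(D^n_\Sigma,D^n_{\Sigma'}) \rar \dar & X \dar \\
\Map_{\VVec,\partial}(TD^n_\Sigma,TD^n_{\Sigma'}) \rar & X',
\end{tikzcd}
\]
together with the facts that $X$ is contractible by \cite[Thm.~1.4]{BoavidaWeiss:SSMCC} and that $X'$ is a compactly supported section space over $D^n_\Sigma$ whose fibers are $\mr{Aut}^h(\mathbb{E}_n)$ by \cite[Thm.~1.2]{BoavidaWeiss:SSMCC}. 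Contractibility of $X$ turns the square into a fiber sequence, and the two other corners are identified with $\Omega^n\mr{O}(n)$ (parallelizability of $D^n_\Sigma$, $D^n_{\Sigma'}$, since exotic spheres have standard tangent bundle) and $\Omega^n\mr{Aut}^h(\mathbb{E}_n)$, respectively. Your attempt to instead extract the sequence by comparing with the isotopy-extension fiber sequence of Corollary~\ref{cor:disk sequence} does not produce it: that corollary relates $T_\infty\Emb^\s_\partial(D^n_\Sigma,D^n_{\Sigma'})$ to $T_\infty\Emb^\s(\Sigma,\Sigma')$ and $\mr{O}(n+1)$, and nothing in it isolates a map to $\Omega^n\mr{Aut}^h(\mathbb{E}_n)$; that requires the Boavida--Weiss square.

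Separately, you assert that the fiber is taken over the point determined by $[\Sigma]-[\Sigma']\in\Theta_n$ via $\pi_n(\mr{Top}(n)/\mr{O}(n))$. This is plausible heuristically, but it is not established by the argument, and the paper explicitly declines to identify the basepoint — indeed, the remark following the proposition says it is not clear how to do so, and the subsequent non-surjectivity corollary is carefully phrased so as not to depend on such an identification. If you want to claim control of the basepoint, you would need a genuinely new argument matching the Boavida--Weiss section-space description against smoothing theory, which is beyond what the cited results give.
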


\begin{proof}
According to \cite[Thm.~1.1]{BoavidaWeiss:SSMCC} (with modifications for manifolds with boundary as in \cite[Section 6]{BoavidaWeiss:SSMCC}), there is a homotopy Cartesian square \[\begin{tikzcd} T_\infty\Emb^\s_\partial(D^n_\Sigma,D^n_{\Sigma'}) \rar \dar & Y \dar \\
\Map_{\VVec,\partial}(TD^n_\Sigma,TD^n_{\Sigma'}) \rar & Y', \end{tikzcd}\] where $Y$ is contractible  \cite[Thm.~1.4]{BoavidaWeiss:SSMCC} and $Y'$ is a mapping space between certain ``local configuration categories.''  We require only two pieces of information about $Y'$: (i) it is the space of compactly supported sections of a bundle over $D^n_\Sigma$, (ii) the fibres are weakly equivalent to $\mr{Aut}^h(\mathbb{E}_n)$ by \cite[Thm. 1.2]{BoavidaWeiss:SSMCC}. These facts give the identification of the righthand term, and the identification of the middle term follows from aforementioned fact about tangent bundles of exotic spheres.
\end{proof}

The action of $\mr{O}(n)$ on little $n$-disks operad by rotation gives a map $\mr{O}(n) \to \mr{Aut}^h(\mathbb{E}_n)$.  We do not know much about its effect on homotopy groups. Nevertheless, using our results on exotic spheres, we can say the following.

\begin{corollary}The map $\mr{O}(n) \to \mr{Aut}^h(\mathbb{E}_n)$ is not surjective on $\pi_n$ when $n = 2^j$ with $j \geq 3$.\end{corollary}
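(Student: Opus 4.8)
The plan is to deduce the corollary formally from Proposition~\ref{prop:tinfty disks} together with the emptiness statement in Theorem~\ref{athm:spheres}. Fix $n = 2^j$ with $j \geq 3$ and, by Theorem~\ref{athm:spheres}, choose an exotic $n$-sphere $\Sigma$ with $T_\infty\Emb^\s(\Sigma, S^n) = \varnothing$. Applying Corollary~\ref{cor:disk sequence} with $\Sigma' = S^n$ (the construction there applies verbatim to the standard sphere, with $D^n_{S^n} = D^n$), we obtain a fiber sequence
\[T_\infty\Emb^\s_\partial(D^n_\Sigma, D^n) \lra T_\infty\Emb^\s(\Sigma, S^n) \lra \mr{O}(n+1)\]
whose total space is empty; hence the fiber $T_\infty\Emb^\s_\partial(D^n_\Sigma, D^n)$ is empty.

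Next, invoke Proposition~\ref{prop:tinfty disks} with the same $\Sigma$ and $\Sigma' = S^n$ to get a fiber sequence
\[T_\infty\Emb^\s_\partial(D^n_\Sigma, D^n) \lra \Omega^n\mr{O}(n) \lra \Omega^n\mr{Aut}^h(\mathbb{E}_n),\]
in which the right-hand map is $\Omega^n$ applied to $\mr{O}(n) \to \mr{Aut}^h(\mathbb{E}_n)$ and the fiber is taken over some point $x \in \Omega^n\mr{Aut}^h(\mathbb{E}_n)$. A homotopy fiber of a map over a point is nonempty precisely when the component of that point lies in the image of the induced map on $\pi_0$. Since the fiber here is empty, the class of $x$ in $\pi_0(\Omega^n\mr{Aut}^h(\mathbb{E}_n)) \cong \pi_n(\mr{Aut}^h(\mathbb{E}_n))$ does not lie in the image of $\pi_0(\Omega^n\mr{O}(n)) \cong \pi_n(\mr{O}(n))$; as this map on $\pi_0$ is exactly the homomorphism $\pi_n(\mr{O}(n)) \to \pi_n(\mr{Aut}^h(\mathbb{E}_n))$ induced by $\mr{O}(n) \to \mr{Aut}^h(\mathbb{E}_n)$, this homomorphism is not surjective.

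The one point requiring care --- and the one the surrounding text flags as unresolved --- is the identity of the basepoint $x$; but the argument deliberately avoids this issue, using only that \emph{some} component of $\Omega^n\mr{Aut}^h(\mathbb{E}_n)$ is not hit, which is exactly what an empty homotopy fiber furnishes. Accordingly, the genuine content lies entirely in the inputs (Theorem~\ref{athm:spheres}, ultimately the computation of Theorem~\ref{thm:mahowald-examples}, and the Boavida--Weiss description behind Proposition~\ref{prop:tinfty disks}); once those are granted, the deduction presents no further obstacle.
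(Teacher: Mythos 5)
Your proposal is correct and follows essentially the same route as the paper: the paper states the argument as a contradiction (assume surjectivity, use Proposition~\ref{prop:tinfty disks} to get a nonempty fiber, then Corollary~\ref{cor:disk-vs-sphere} to contradict the emptiness furnished by Theorem~\ref{thm:mahowald-examples}), while you run it directly from Theorem~\ref{athm:spheres} via Corollary~\ref{cor:disk sequence}; these are logically identical and the minor choice between the oriented and unoriented disk corollaries is immaterial. Your explicit remark about why the unknown basepoint does not matter is a welcome clarification of a point the paper leaves implicit.
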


\begin{proof}Let $\Sigma$ be an exotic $n$-sphere as in Theorem \ref{thm:mahowald-examples}. Looping the map $\mr{O}(n) \to \mr{Aut}^h(\mathbb{E}_n)$ we obtain a map $\Omega^n \mr{O}(n) \to \Omega^n \mr{Aut}^h(\mathbb{E}_n)$ and the torsor structures on the domain and target of $X \to X'$ are compatible with this. If the map $\mr{O}(n) \to \mr{Aut}^h(\mathbb{E}_n)$ were surjective on $\pi_n$, Proposition \ref{prop:tinfty disks} would imply that $X \to X'$ is surjective on path components and hence $T_\infty\Emb^\s_\partial(D^n_\Sigma,D^n)\neq\varnothing$. Corollary \ref{cor:disk-vs-sphere} then implies a contradiction of Theorem \ref{thm:mahowald-examples}.
\end{proof}	
	
\begin{remark} The map in question \emph{is} injective on $\pi_n$, at least when $n$ is sufficiently large. Restricting to the $(n-1)$-sphere of binary operations in $\mathbb{E}_n$ and suspending produces the righthand map in $\mr{O}(n) \to \mr{Aut}^h(\mathbb{E}_n) \to \mr{Aut}^h_*(S^n)$ whose composite is the unstable $J$-homomorphism, which is injective on $\pi_n$ for $n \geq 40$ \cite{Mahowald:MHSN}.
\end{remark}

\subsubsection{Morlet's theorem for $T_\infty$} \label{sec:morlet} Setting $\Sigma=\Sigma'=S^n$, we draw the following conclusion, with $\mr{Aut}^h(\mathbb{E}_n)/\mr{O}(n)$ notation for the homotopy fibre of $B\mr{O}(n) \to B\mr{Aut}^h(\mathbb{E}_n)$.

\begin{corollary}
There are weak equivalences 
\begin{align*}\label{cor:morlet-tinfty}
&T_\infty \Diff_\partial(D^n) \simeq \Omega^{n+1}\mr{Aut}^h(\mathbb{E}_n)/\mr{O}(n),\\
&T_\infty \mr{Diff}(S^n) \simeq \mr{O}(n+1) \times \Omega^{n+1} \mr{Aut}^h(\mathbb{E}_n)/\mr{O}(n).
\end{align*}
\end{corollary}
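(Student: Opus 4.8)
The plan is to derive both equivalences from the isotopy extension theorem (Theorem~\ref{thm:isotopy ext}), together with the computation of $T_\infty\Emb^\s_\partial(D^n_\Sigma,D^n_{\Sigma'})$ from Proposition~\ref{prop:tinfty disks} specialized to $\Sigma=\Sigma'=S^n$, and the fiber sequence of Corollary~\ref{cor:disk sequence}. First I would treat the disk case. Apply Theorem~\ref{thm:isotopy ext} with $N=M=D^n$ and $P=D^n$ itself; the hypothesis ``$P=\sqcup_I D^n$'' holds, so the square is homotopy Cartesian. The bottom-right corner is $T_\infty\Emb^\s(D^n,D^n)\simeq\Emb^\s(D^n,D^n)\simeq \mr{O}(n)$ (by Theorem~\ref{thm:gkw} and the standard contractibility, or directly from Lemma~\ref{lem:smooth germ}), the top-right corner is $T_\infty\Diff_\partial(D^n)$, and the top-left corner is $T_\infty\Emb^\s_\partial(D^n_{S^n},D^n_{S^n}) = T_\infty\Emb^\s_\partial(D^n\setminus\mathring D^n, D^n\setminus\mathring D^n)$. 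Wait: I should instead set $N=M=S^n$ and $P=D^n$, so that $N\setminus\mathring P = D^n_{S^n} = D^n$ on each side; then the square reads that $T_\infty\Emb^\s_\partial(D^n,D^n)\to T_\infty\Emb^\s(S^n,S^n)\to T_\infty\Emb^\s(D^n,S^n)\simeq \mr{O}(n+1)$ is a fiber sequence, which is exactly Corollary~\ref{cor:disk sequence} with $\Sigma=\Sigma'=S^n$. So the real content is identifying $T_\infty\Emb^\s_\partial(D^n,D^n)=T_\infty\Diff_\partial(D^n)$.

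For the disk formula, I would take the fiber sequence of Proposition~\ref{prop:tinfty disks} with $\Sigma=\Sigma'=S^n$:
\[
T_\infty\Emb^\s_\partial(D^n,D^n)\lra \Omega^n\mr{O}(n)\lra \Omega^n\mr{Aut}^h(\mathbb{E}_n).
\]
Since $\mr{Emb}^\s_\partial(D^n,D^n)=\Diff_\partial(D^n)$ sits inside $\Emb^\s_\partial(D^n,D^n)$ as the invertible components and $T_\infty$ preserves this (everything maps compatibly to $T_1$, bundle maps, where invertibility is detected), the left-hand term is $T_\infty\Diff_\partial(D^n)$. The fiber sequence $\Omega^n\mr{O}(n)\to\Omega^n\mr{Aut}^h(\mathbb{E}_n)$ is the $n$-fold loops of the map $\mr{O}(n)\to\mr{Aut}^h(\mathbb{E}_n)$, so its fiber is $\Omega^n$ of the homotopy fiber of $\mr{O}(n)\to\mr{Aut}^h(\mathbb{E}_n)$, i.e.\ $\Omega^n$ of the homotopy fiber of $B\mr{O}(n)\to B\mr{Aut}^h(\mathbb{E}_n)$ shifted by one, which is precisely $\Omega^{n+1}\bigl(\mr{Aut}^h(\mathbb{E}_n)/\mr{O}(n)\bigr)$ in the notation introduced just above the statement. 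This gives the first equivalence.

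For the sphere formula, I would return to the fiber sequence from Corollary~\ref{cor:disk sequence} (oriented version, Corollary~\ref{cor:disk-vs-sphere}, or just the unoriented one):
\[
T_\infty\Emb^\s_\partial(D^n,D^n)\lra T_\infty\Emb^\s(S^n,S^n)\lra \mr{O}(n+1),
\]
and restrict to invertible components to get $T_\infty\Diff_\partial(D^n)\to T_\infty\Diff(S^n)\to \mr{O}(n+1)$. The map $\mr{O}(n+1)\to T_\infty\Diff(S^n)$, coming from the linear action of $\mr{O}(n+1)$ on $S^n$, splits this fibration: the composite $\mr{O}(n+1)\to T_\infty\Diff(S^n)\to T_\infty\Emb^\s(D^n,S^n)\simeq\mr{O}(n+1)$ is an equivalence (it is the frame-bundle identification, which is $\mr{O}(n+1)$-equivariant). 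Hence the fiber sequence splits as a product, yielding
\[
T_\infty\Diff(S^n)\simeq \mr{O}(n+1)\times T_\infty\Diff_\partial(D^n)\simeq \mr{O}(n+1)\times\Omega^{n+1}\mr{Aut}^h(\mathbb{E}_n)/\mr{O}(n),
\]
using the first equivalence. The main obstacle is the bookkeeping around restricting to invertible components and checking that the splitting map really is a section up to homotopy --- i.e.\ that the $\mr{O}(n+1)$-action on $S^n$ induces, after applying $T_\infty$ and projecting to $T_\infty\Emb^\s(D^n,S^n)$, the standard frame-bundle equivalence. This is essentially formal given the compatibility of $T_\infty$ with composition and with the first Taylor stage, but it is the point that needs care; everything else is assembling the fiber sequences already established.
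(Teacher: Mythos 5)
Your overall strategy coincides with the paper's: specialize Proposition \ref{prop:tinfty disks} to $\Sigma=\Sigma'=S^n$ to identify $T_\infty\Emb^\s_\partial(D^n,D^n)$ as $\Omega^{n+1}\mr{Aut}^h(\mathbb{E}_n)/\mr{O}(n)$, then feed this into the fiber sequence of Corollary \ref{cor:disk sequence} and use the $\mr{O}(n+1)$-action on $S^n$ to split it. The loop-space bookkeeping and the splitting argument are both correct.

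However, there is a genuine gap at the step where you replace $T_\infty\Emb^\s_\partial(D^n,D^n)$ by $T_\infty\Diff_\partial(D^n)$. Recall that $T_\infty\Diff_\partial(D^n)$ is \emph{defined} in Section \ref{sec:emb calc defs} as the simplicial subset of homotopy invertible components of the monoid $T_\infty\Emb^\s_\partial(D^n,D^n)$, and the paper explicitly flags as an open question whether every component of such a monoid is invertible. Your parenthetical justification---that ``invertibility is detected at $T_1$''---is not a valid principle: nothing guarantees that an element of $T_\infty\Emb^\s_\partial(D^n,D^n)$ whose image in $T_1$ is invertible is itself invertible in $T_\infty$. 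If this were automatic, the Question following Definition \ref{def:limit of tower} would be resolved. What actually closes the gap in the paper's argument is the fiber sequence of Proposition \ref{prop:tinfty disks} itself: since the basepoint is the constant loop at the identity, $T_\infty\Emb^\s_\partial(D^n,D^n)$ is exhibited as the fiber over the unit of a map of $n$-fold loop spaces, hence is group-like, hence \emph{every} path component is invertible, giving $T_\infty\Diff_\partial(D^n)=T_\infty\Emb^\s_\partial(D^n,D^n)$. You should replace the ``detected at $T_1$'' claim with this group-like argument; once that is done, the rest of your proof is sound and matches the paper's.
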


\begin{proof}When $\Sigma = \Sigma' = S^n$, we have $D^n_\Sigma = D^n_{\Sigma'} = D^n$. In the fibration sequence \[T_\infty\Emb^\s_\partial(D^n,D^n)\lra \Omega^n \mr{O}(n)\lra  \Omega^n \mr{Aut}^h(\mathbb{E}_n)\]
	from Proposition \ref{prop:tinfty disks}, the basepoint is provided by the constant map at the identity. Thus $T_\infty\Emb^\s_\partial(D^n,D^n)$ is the fibre of a map of $n$-fold loop spaces over the unit, and hence it is group-like. This implies that $T_\infty \Diff_\partial(D^n)= T_\infty\Emb^\s_\partial(D^n,D^n)$, and the first claim follows. The second claim then follows from Corollary \ref{cor:disk sequence}, using the splitting provided by the natural action of $\mr{O}(n+1)$ on $S^n$.
\end{proof}

This result is to be compared to the classical theorem of Morlet, which asserts the same conclusion with $T_\infty$ removed and $\mr{Aut}^h(\mathbb{E}_n)$ replaced by $\mr{Top}(n)$ (see e.g.~\cite[Theorem 4.4 (b)]{BurgheleaLashof:HSD} \cite[Essay V]{KirbySiebenmann:FETMST}). Unlike Morlet's theorem, our results are valid even for $n=4$.

\begin{example}Since $\mr{Aut}(\mathbb{E}_2) \simeq \mr{O}(2)$ \cite[Thm. 8.5]{Horel:PCOGTG}, we conclude that $\mr{Diff}(S^2) \to T_\infty \mr{Diff}(S^2)$ is a weak equivalence, furnishing another example of convergence in codimension 0. In fact, embedding calculus always converges for diffeomorphisms of surfaces by \cite[Theorem A]{KrannichKupers:ECFS}.\end{example}


\subsubsection{Rephrasing the Weiss fibration sequence} Consider a manifold $M$ with $\partial M = S^{n-1}$ and disc $D^n \subset M$ such that $\partial M \cap D^n = D^{n-1} \subset \partial D^n$; that is, the disk meets the boundary of $M$ in half its boundary. Then there is a fibration sequence which---informally speaking---describes $\Diff_\partial(M)$ as built from $\Diff_\partial(D^n)$ and a certain space of self-embeddings of $M$ \cite[Section 4]{Kupers:SFRGAM} \cite[Remark 2.1.2]{Weiss:Dalian}. We will use Theorem \ref{thm:isotopy ext} to reformulate this result.

Let $T_\infty \Diff^{\cong}_\partial(M) \subseteq T_\infty \Diff_\partial(M)$ denote the union of the path components lying in the image of $\Diff_\partial(M)$. The following result asserts that, with suitable assumptions on $M$, the homotopy fibre
\[M \longmapsto \hofibre[B\Diff_\partial(M) \to BT_\infty \Diff^{\cong}_\partial(M),]
\] which we think of as the ``error term'' involved in applying embedding calculus to diffeomorphisms, is independent of $M$.

\begin{corollary}Let $M$ be a $2$-connected compact smooth manifold of dimension $n \geq 6$ with $\partial M = S^{n-1}$. The diagram
	\[\begin{tikzcd} B\Diff_\partial(D^n) \rar \dar & B\Diff_\partial(M) \dar \\
	BT_\infty \Diff^{\cong}_\partial(D^n) \rar & BT_\infty \Diff^{\cong}_\partial(M) \end{tikzcd}\]
	is homotopy Cartesian.\end{corollary}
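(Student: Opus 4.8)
The plan is to deduce this corollary from the isotopy extension theorem for $T_\infty$ (Theorem~\ref{thm:isotopy ext}), applied in the relative-corners form of Remark~\ref{remark:neat corners}, together with the classical Weiss fiber sequence recalled from \cite[Section~4]{Kupers:SFRGAM}. First I would fix a neatly embedded disk $D^n \subseteq M$ meeting $\partial M = S^{n-1}$ in a hemisphere, and write $M_1 \coloneqq M \setminus \mathring{D}^n$ for the complement, a compact manifold with corners whose ``horizontal'' boundary is a hemisphere of $S^{n-1}$ and whose ``vertical'' boundary is the complementary hemisphere. Applying Theorem~\ref{thm:isotopy ext} (in the form of Remark~\ref{remark:neat corners}) with $N = M = M$ and $P = D^n$ gives, after passing to classifying spaces of the group-like monoids involved, a homotopy Cartesian square relating $BT_\infty\mr{Emb}_\partial^\s(D^n,M)$, $BT_\infty\Diff_\partial(M)$ and $BT_\infty\mr{Emb}_\partial^\s(M_1,M_1)$; the same applied with $M$ replaced by $D^n$ gives the analogous square with $M_1$ replaced by a point. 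Since $\mr{hdim}(D^n) = 0 \leq n - 3$, the hypothesis of the theorem is satisfied for any $n \geq 3$, and in particular for $n \geq 6$.

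The next step is to pass from $T_\infty\mr{Emb}$ to $T_\infty\Diff^{\cong}$. Because $M$ is $2$-connected with $\partial M = S^{n-1}$ and $n \geq 6$, the handle dimension of $M_1$ relative to its vertical boundary is at most $n-3$ (one can cancel handles of index $0,1,2$ using $2$-connectedness and of index $n-2,n-1,n$ by Poincar\'e--Lefschetz duality), so Theorem~\ref{thm:gkw} gives that $T_\infty$ converges on the relevant self-embedding spaces of $M_1$. This identifies the corner $T_\infty\mr{Emb}_\partial^\s(M_1,M_1)$ with the honest self-embedding space, whose invertible components assemble the Weiss ``self-embedding'' term; and it identifies $T_\infty\mr{Emb}_\partial^\s(D^n,M) \simeq \mr{Emb}_\partial^\s(D^n,M)$, a framed configuration-type space that is the same for $M$ and for $D^n$ up to the relevant connectivity. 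Restricting each square to the path components hit by actual diffeomorphisms (respectively embeddings) — which is legitimate since the maps in Theorem~\ref{thm:isotopy ext} are compatible with the maps from honest embedding spaces — one obtains two homotopy Cartesian squares, and comparing them via the map $D^n \to M$ yields the claimed square by the standard ``homotopy pullback of homotopy pullbacks'' argument: the fiber of $BT_\infty\Diff_\partial^{\cong}(M) \to BT_\infty\Diff_\partial^{\cong}(D^n)$ equals the fiber of $B\Diff_\partial(M) \to B\Diff_\partial(D^n)$ because both equal (the relevant components of) the classifying space of the Weiss self-embedding monoid of $M_1$, and for $M = D^n$ this is trivial.

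I expect the main obstacle to be bookkeeping rather than conceptual: carefully matching the corners-and-collars data in Remark~\ref{remark:neat corners} with the precise form of the Weiss fiber sequence in \cite{Kupers:SFRGAM} (which is phrased in terms of a space of self-embeddings of $M$ restricting to the boundary, not literally of $M_1$), and checking that the passage to invertible/realized path components is compatible on the nose across all three squares. One must also verify that the relative handle dimension bound for $M_1$ genuinely holds — this uses the Whitney trick and hence the hypotheses $n \geq 6$ and $2$-connectedness in an essential way — since without it Theorem~\ref{thm:gkw} does not apply and the identification of the self-embedding corner with its Taylor approximation fails. Everything else is a formal manipulation of homotopy Cartesian squares.
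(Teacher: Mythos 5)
Your overall plan — map the Weiss fiber sequence to its $T_\infty$-analogue (obtained from Theorem~\ref{thm:isotopy ext} and Remark~\ref{remark:neat corners}) and show the base map is an equivalence using a relative handle-dimension bound — is exactly the strategy of the paper's proof. However, there is a concrete error in how you set up the isotopy extension theorem, and it propagates through the rest of the argument.

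You apply Theorem~\ref{thm:isotopy ext} with $P=D^n$, and you note that $\mr{hdim}(D^n)=0\leq n-3$ makes the hypothesis of the theorem automatic. But with $P=D^n$ a boundary half-disk, the theorem yields the square whose base is $T_\infty\Emb^\s_{\partial_0 P}(D^n,M)$. Since $D^n$ is a collar on $\partial_0 P = D^{n-1}$, this base is contractible: $T_\infty\Emb^\s_{\partial_0 P}(D^n,M)\simeq \Emb^\s_{\partial_0 P}(D^n,M)\simeq \ast$ by the Yoneda lemma. The fiber sequence therefore degenerates to the tautological statement $T_\infty\Diff_\partial(M\setminus\mathring{D}^n)\simeq T_\infty\Diff_\partial(M)$, which records nothing more than the isotopy equivalence $M_1\simeq M$ rel the remaining boundary; it is not the $T_\infty$-Weiss fiber sequence. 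The correct choice is $P=M_1$, so that $N\setminus\mathring{P}=D^n$ appears as the \emph{fiber} rather than as a contractible base, and the base becomes the $T_\infty$ self-embedding space $T_\infty\Emb^\s_{\partial_0 M_1}(M_1,M)$. With this choice, the hypothesis $\mr{hdim}(P)\leq n-3$ (relative to $\partial_0 P$) is precisely what \cite[Lemma~3.14]{Kupers:SFRGAM} supplies from $2$-connectedness and $n\geq 6$; note then that this hypothesis is genuinely needed to invoke Theorem~\ref{thm:isotopy ext}, not just for the later convergence step, so your remark that it is ``satisfied for any $n\geq 3$'' is wrong.

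The same misidentification affects your convergence step. You wish to identify $T_\infty\Emb^\s_\partial(M_1,M_1)$ with the honest self-embedding space via Theorem~\ref{thm:gkw}, using that $M_1$ has handle dimension $\leq n-3$ relative to its vertical boundary. But in your fiber sequence the boundary condition on $M_1$ is its \emph{entire} boundary, and relative to the full boundary $M_1$ has handle dimension $n$, not $\leq n-3$; Theorem~\ref{thm:gkw} does not apply to $T_\infty\Diff_\partial(M_1)$. The handle-dimension bound is applicable only to $\Emb$-spaces where a codimension-$\geq 3$ handle body of the source is left free, which is what $\Emb^{\cong}_{\partial/2}(M)$ (equivalently $\Emb_{\partial_0 M_1}(M_1,M)$) provides. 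The paper sidesteps these bookkeeping traps by working directly with $\Emb^{\cong}_{\partial/2}(M)$: it places the classical Weiss fiber sequence as the top row of a $2\times 3$ diagram, the $T_\infty$-version (from Theorem~\ref{thm:isotopy ext} with $P=M_1$) as the bottom row, shows the right-hand vertical map $B\Emb^{\cong}_{\partial/2}(M)\to BT_\infty\Emb^{\cong}_{\partial/2}(M)$ is an equivalence by the handle-dimension bound, and concludes the left square is Cartesian. If you swap $P$ to $M_1$ and apply the handle-dimension bound to the correct (half-boundary) self-embedding space, your proposal becomes the paper's proof.
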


\begin{proof} Fix an embedded closed disk $D^{n-1} \subseteq \partial M$, and let $\Emb^\s_{\partial/2}(M)$ denote the simplicial monoid of self-embeddings of $M$ fixing $D^{n-1}$ pointwise. There is the grouplike submonoid $\Emb^{\s,\cong}_{\partial/2}(M) \subseteq \Emb^\s_{\partial/2}(M)$ given by the union of the path componnents lying in the image of $\Diff_\partial(M)$. By naturality properties of embedding calculus (see \cite[Section 3, 4]{KrannichKupers:DSS} for a detailed proof of these), the diagram
	\[\begin{tikzcd} B\Diff_\partial(D^n) \rar \dar & B\Diff_\partial(M) \dar \rar & B\Emb^{\s,\cong}_{\partial/2}(M) \dar \\
	BT_\infty \Diff^{\cong}_\partial(D^n) \rar & BT_\infty \Diff^{\cong}_\partial(M) \rar & BT_\infty\Emb^{\s,\cong}_{\partial/2}(M)\end{tikzcd}\] commutes. In \cite[Lemma 3.14]{Kupers:SFRGAM}, it is verified that $M$ has handle dimension at most $n-3$ relative to $D^{n-1}$, so the righthand vertical map is a weak equivalence (strictly speaking, to apply embedding calculus as discussed above, we must remove the complement of $D^{n-1}$ in $S^{n-1}$, which gives homotopy equivalent spaces.) The top row is a fibration sequence by isotopy extension---see \cite[Remark 2.1.2]{Weiss:Dalian} and \cite[Theorem 4.17]{Kupers:SFRGAM}---and the bottom row is a fibration sequence by Theorem \ref{thm:isotopy ext} (using the extension explained in Remark \ref{rem:isotopy-ext} \eqref{enum:neat corners}).\end{proof}

\subsubsection{An example of convergence in handle codimension 2} \label{Sec:codim 2 convergence} We finish with an example of the convergence of embedding calculus Taylor tower in handle codimension 2. For the sake of readability, we omit some details regarding boundary conditions; for example, strictly speaking, to apply embedding calculus as discussed above, one must remove parts of $S^2 = \partial D^3$ not in $\partial_0 D^1_{+,\epsilon}$.

Let $D^3 \subset \mathbb{R}^3$ be the closed unit disk, which contains the interval
\[D^1 = \{(x_1,0,0) \mid x_1 \in [-1,1]\}\]
as a submanifold with boundary. We let
\[\mathbb{R}^3_+ \coloneqq \{(x_1,x_2,x_3) \mid x_1 \geq 0\}\]
denote the half plane and set $D^1_+ \coloneqq D^1 \cap \mathbb{R}^3_+.$
This is a manifold with boundary given by the union of the two points $\partial_0 D^{1}_+  = \{(0,0,0)\} = D^0$ and $\partial_1 D^{1}_+ \coloneqq \{(1,0,0)\} = D^1_+ \cap S^2$. 

The situation we will be interested in is obtained by ``thickening'' to codimension 0 the following simpler situation. By isotopy extension, there is a fibration sequence
\[\Emb^\s_{\partial}(D^1_+,D^3) \lra \Emb^\s_{\partial_1}(D^1_+,D^3) \lra \Emb^\s(D^0,D^3),\]
where the fibre is taken over the inclusion. As the middle term is contractible, we obtain the weak equivalence $\Emb^\s_{\partial}(D^1_+,D^3) \overset{\sim}\to \Omega\, \Emb^\s(D^0,D^3) \simeq \ast$,
a space-level version of the light bulb trick.

\begin{figure}
	\begin{subfigure}[t]{.45\textwidth}
	\centering
	\begin{tikzpicture}[scale=1.2]
	\draw (0,0) circle (2cm);
	\node at (0,2) [above] {$D^3$};
	\draw (-2,0) arc (180:360:2 and 0.6);
	\draw[dashed] (2,0) arc (0:180:2 and 0.6);
	\draw[thick] (0,0 ) -- node[above,fill=white]{$D^1_+$} (2,0);
	\node at (0,0)[label=left:{$\partial_0 D^1_+$}] {$\bullet$};
	\node at (2,0)[label=right:{$\partial_1 D^1_+$}] {$\bullet$};
	\end{tikzpicture}
	\subcaption{The subspaces of $D^3$ involved in the earlier part of Section \ref{Sec:codim 2 convergence}.}
	\end{subfigure}
	\hfill
	\begin{subfigure}[t]{.45\textwidth}
	\centering
	\begin{tikzpicture}[scale=1.2]
	\begin{scope}
	\clip (0,0) circle (2cm);
	\fill[Mahogany!5] (.4,.25) -- (2,.25) -- (2,-.25) -- (.4,-.25) -- cycle;
	\draw (.4,.25) -- (2,.25) -- (2,-.25) -- (.4,-.25);
	\end{scope}
	\draw (0,0) circle (2cm);
	\node at (0,2) [above] {$D^3$};
	\draw (-2,0) arc (180:360:2 and 0.6);
	\draw[dashed] (2,0) arc (0:180:2 and 0.6);
	\fill[fill=black] (0,0) circle (1pt);
	\begin{scope}[scale=.25]
	\draw[fill=Mahogany!5] (0,0) circle (2cm);
	\node at (0,2) [above,fill=white] {$D^3_\epsilon$};
	\draw (-2,0) arc (180:360:2 and 0.6);
	\draw[dashed] (2,0) arc (0:180:2 and 0.6);
	\end{scope}
	\node at (1.2,0) {$C$};
	\end{tikzpicture}
	\subcaption{The subspaces of $D^3$ involved in the latter part of Section \ref{Sec:codim 2 convergence}. The shaded region is $D^1_{+,\epsilon}$.}
	\end{subfigure}
	\caption{Several subspaces of $D^3$ which appear in Section \ref{Sec:codim 2 convergence}.}
\end{figure}
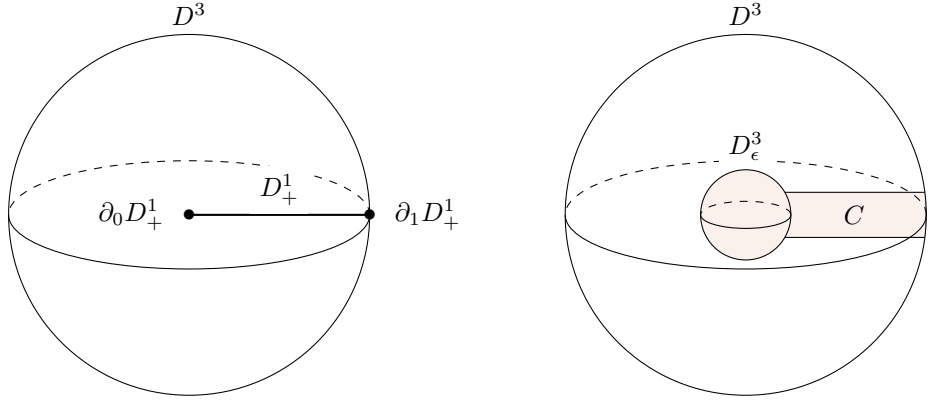

We now ``thicken'' all the submanifolds involved to codimension 0. Fixing a small $\epsilon >0$, we replace $D^0$ by $D^3_\epsilon$ and $D^1_+$ by the union of $D^3_\epsilon$ with a closed $\epsilon/2$-neighborhood of $D^1_{+,\epsilon}$ in $D^3$. We let $C$ denote the closure of $D^1_{+,\epsilon} \setminus \mathring{D}^3_{\epsilon}$ in $D^1_{+,\epsilon}$, essentially a cylinder. Its boundary intersects the larger sphere in $\partial_0 D^1_{+,\epsilon} \coloneqq D^1_{+,\epsilon} \cap S^2$ and the smaller sphere in $\partial_1 D^1_{+,\epsilon} \coloneqq D^1_{+,\epsilon} \cap S^2_\epsilon \cap \mathbb{R}^3_+$. As before, isotopy extension produces a fibration sequence with contractible middle term, whence the weak equivalence
\[\Emb^\s_{\partial_0 \cup \partial_1}(C, D^3 \setminus \mathring{D}^3_\epsilon) \overset{\sim}\lra \Omega\,\Emb^\s(D^3_\epsilon,D^3) \simeq \Omega\mr{O}(3).\]

We now show that embedding calculus captures this homotopy type; specifically, the lefthand vertical map is a weak equivalence in the commuting diagram \[\begin{tikzcd}\Emb^\s_{\partial_0 \cup \partial_1}(C, D^3 \setminus \mathring{D}^3_\epsilon) \rar \dar &[-10pt]  \Emb^\s_{\partial_1}(D^1_{+,\epsilon},D^3) \rar \dar &[-10pt] \Emb^\s(D^3_{\epsilon},D^3) \dar \\
T_\infty \Emb^\s_{\partial_0 \cup \partial_1}(C, D^3 \setminus \mathring{D}^3_\epsilon) \rar & T_\infty \Emb^\s_{\partial_1}(D^1_{+,\epsilon},D^3) \rar & T_\infty \Emb^\s(D^3_{\epsilon},D^3),\end{tikzcd}\] giving an example of convergence in codimension $2$. Since $D^3$ has handle dimension $0$, isotopy extension for embedding calculus---or rather, the extension to neat embeddings of manifolds with corners---implies that the bottom row is also a fibration sequence, so it suffices to show that the middle and righthand vertical maps are weak equivalences, both of which follow from the Yoneda lemma. For the latter map, we use that the inclusion of the interior $D^3_\epsilon$ induces a weak equivalence $T_\infty \Emb^\s(D^3_\epsilon,D^3) \simeq T_\infty \Emb^\s(\mathring{D}^3_\epsilon,D^3)$. For the former, we may similarly replace the source in $T_\infty \Emb^\s(D^1_{+,\epsilon},D^3)$ with an open collar on $\partial_1 D^1_{+,\epsilon}$.

\begin{remark}These results generalize from dimension 3 to arbitrary dimension $n \geq 3$ by changing notation: it says that embedding calculus converges in codimension 2 for embeddings of $D^{n-3} \times C$ in $D^{n-3} \times (D^3 \setminus \mathring{D}^3_\epsilon)$.
\end{remark}

\begin{appendix}

\section{Homotopy pullbacks of simplicial categories}\label{section:construction} In this appendix, we discuss a simplicial variant of a construction introduced in \cite[\S9]{Andrade:FMIEA} for topological categories.

Suppose given the following solid commuting diagram of simplicial categories \[\begin{tikzcd}
\A\times^h_{\C}\B\arrow[dashed]{rr}\arrow[dashed]{dd}&&[10pt]\B\ar{dl}[swap]{G} \arrow{dd}{P_\B}\\
&\C\arrow{rd}{P_\C}\\
\A\arrow{ur}{F}\arrow{rr}{P_\A} &&\TTop,\end{tikzcd}\] where $\TTop$ denotes the simplicial category of topological spaces. Via the structure functors to $\TTop$, objects and morphisms in $\A$, $\B$, and $\C$ have underlying spaces and maps.

\begin{construction}\label{construction:hpb}
	We define a simplicial category $\A\times^h_{\C}\B$ as follows.
	\begin{enumerate}
		\item The objects of $\A\times^h_{\C}\B$ are triples $(A,B,f)$, where $A\in\A$ and $B\in \B$ are objects with the same underlying space, and $f \colon F(A)\to G(B)$ is an isomorphism with underlying map the identity.
		\item An $n$-simplex in the mapping space from $(A_1,B_1,f_1)$ to $(A_2,B_2,f_2)$ is a triple $(\varphi, \psi, \gamma)$, where $\varphi\in \Map_\A(A_1, A_2)_n$ and $\psi\in\Map_\B(B_1, B_2)_n$ have the same underlying simplex in $\TTop$, and $\gamma$ is a path $f_2\circ F(\varphi)\implies G(\psi)\circ f_1$ in $(\Map_\C(F(A_1),G(B_2))^{\Delta^1})_n$ covering the constant path.
		\item Composition is induced by composition in $\A$, $\B$, and $\C$, and the diagonal of $\Delta^1$.
	\end{enumerate}
\end{construction}

The notation $\A\times^h_{\C}\B$ is justified by the following result. whose proof we defer to the end of this subsection and may be skipped on a first reading.

\begin{proposition}\label{prop:homotopy pullback}
Suppose that
\begin{enumerate} 
\item each of the simplicial sets $\Map_\B(B_1,B_2)$ and $\Map_\C(F(A_1), G(B_2))$ is a Kan complex, and 
\item each of the structure maps $\Map_\B(B_1,B_2)\to \Map_{\TTop}(P_\B(B_1), P_\B(B_2))$ and $\Map_\C(F(A_1),G(B_2))\to \Map_{\TTop}(P_\A(A_1), P_\B(B_2))$ is a Kan fibration.
\end{enumerate}
The diagram
\[
\begin{tikzcd}
\Map_{\A\times^h_{\C}\B}\left((A_1,B_1,f_1),(A_2,B_2,f_2)\right)\ar{r}\ar{d}&\Map_\B(B_1,B_2,)\ar{d}\\
\Map_\A(A_1,A_2)\ar{r}&\Map_{\C}(F(A_1), G(B_2))
\end{tikzcd}
\] is homotopy Cartesian. 
\end{proposition}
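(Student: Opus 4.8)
The plan is to unravel the mapping space in $\A\times^h_\C\B$ into an explicit iterated fibre product, show that the evident vertical projection is a Kan fibration even though the simplicial sets involved need not be fibrant, and then conclude by comparing homotopy fibres.

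First I would set up notation. Write $X \coloneqq \Map_\A(A_1,A_2)$, $Y \coloneqq \Map_\B(B_1,B_2)$, $Z \coloneqq \Map_\C(F(A_1),G(B_2))$ and $W \coloneqq \Map_{\TTop}(P_\A(A_1),P_\A(A_2))$. Via the underlying-space functors there are structure maps $q_X \colon X \to W$, $q_Y \colon Y \to W$ and $q_Z \colon Z \to W$ --- the latter two being Kan fibrations with $Y$ and $Z$ Kan complexes, by hypothesis --- together with $a \colon X \to Z$, $\varphi \mapsto f_2 \circ F(\varphi)$, and $b \colon Y \to Z$, $\psi \mapsto G(\psi) \circ f_1$, satisfying $q_Z a = q_X$ and $q_Z b = q_Y$ because $f_1$ and $f_2$ have identity underlying maps. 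Writing $P \coloneqq Z^{\Delta^1} \times_{W^{\Delta^1}} W$ for the simplicial set of paths in $Z$ lying over a constant path of $W$, an unravelling of Construction \ref{construction:hpb} identifies the mapping space $\mathcal{M} \coloneqq \Map_{\A\times^h_\C\B}((A_1,B_1,f_1),(A_2,B_2,f_2))$ with the iterated pullback
\[ \mathcal{M} \;=\; X \times_{a,\,Z,\,\ev_0} P \times_{\ev_1,\,Z,\,b} Y, \]
the point being that the condition that the underlying simplices of $\varphi$ and $\psi$ agree is automatic once $\gamma \in P$ has the prescribed endpoints. Under this identification the square to be analysed is the outer square of this presentation; it commutes only up to the canonical homotopy, fibrewise over $W$, recorded by the path coordinate.

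The key step, which I expect to be the main obstacle, is to show that the projection $p_X \colon \mathcal{M} \to X$ is a Kan fibration (note $X$ and $\mathcal{M}$ need not be Kan complexes). Given a lifting problem against $\Lambda^n_k \hookrightarrow \Delta^n$ specified by $\varphi \in X_n$ and a compatible pair $(\psi_\partial, \gamma_\partial)$ on the horn, I would first extend $\psi_\partial$ to some $\psi \in Y_n$: the only constraint is $q_Y(\psi) = q_X(\varphi)$, so this is a lift of the anodyne inclusion $\Lambda^n_k \hookrightarrow \Delta^n$ against the Kan fibration $q_Y$. I would then extend $\gamma_\partial$ to $\gamma$, i.e.\ produce a map $\Delta^n \times \Delta^1 \to Z$ restricting to $\gamma_\partial$ on $\Lambda^n_k \times \Delta^1$, to $a(\varphi)$ and $b(\psi)$ on the two ends $\Delta^n \times \partial\Delta^1$, and covering under $q_Z$ the constant path at $q_X(\varphi)$; these data are mutually compatible, and producing $\gamma$ is a lifting problem for $q_Z$ against $(\Lambda^n_k \times \Delta^1) \cup (\Delta^n \times \partial\Delta^1) \hookrightarrow \Delta^n \times \Delta^1$, which is anodyne, being the pushout-product of the anodyne map $\Lambda^n_k \hookrightarrow \Delta^n$ with the cofibration $\partial\Delta^1 \hookrightarrow \Delta^1$. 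Pinning down the endpoint $b(\psi)$ \emph{before} filling the prism is exactly what makes this go through; once $p_X$ is a fibration the rest is formal.

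Finally I would compare homotopy fibres over a vertex $x \in X_0$, writing $w \coloneqq q_X(x)$. As $p_X$ is a Kan fibration, $\hofiber_x(p_X)$ is the strict fibre $p_X^{-1}(x)$, which --- since the path coordinate is then forced to lie over the constant path at $w$, hence inside the fibre $Z_w \coloneqq q_Z^{-1}(w)$ --- unravels to the homotopy fibre over $a(x)$ of the restriction $Y_w \to Z_w$, with $Y_w$ and $Z_w$ Kan complexes as fibres of the Kan fibrations $q_Y$ and $q_Z$. On the other hand, modelling $X \times^h_Z Y$ as $X \times_Z \widetilde{Y}$ for a fibration $\widetilde{Y} \to Z$ replacing $b$ (legitimate since $Z$ is fibrant and $\sSet$ is right proper), the projection $X \times_Z \widetilde{Y} \to X$ is a fibration with fibre $\hofiber_{a(x)}(b \colon Y \to Z)$ over $x$; since $b$ is a map of Kan fibrations over $W$, this homotopy fibre is computed fibrewise and so agrees, up to canonical equivalence, with $\hofiber_{a(x)}(Y_w \to Z_w)$. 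Thus the canonical comparison map $\mathcal{M} \to X \times_Z \widetilde{Y}$ is a map of fibrations over $X$ that is a weak equivalence on every fibre, hence a weak equivalence; this is precisely the assertion that the square is homotopy Cartesian. The remaining inputs --- right properness of the Kan--Quillen structure, the pushout-product axiom, and the fact that homotopy fibres of a map of fibrations over a base are computed fibrewise --- are all standard.
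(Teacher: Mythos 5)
Your proof is correct, but it takes a genuinely different route from the paper's. The paper isolates the key step into a reusable lemma (its Lemma~\ref{lem:constant path model}): the inclusion $\iota$ of the constant-path model $P_0=\mathcal{M}$ into the ordinary double mapping-path-space model $P$ is a weak equivalence whenever $q_Y$ and $q_Z$ are fibrations. Hypothesis (1) then guarantees $P$ is a correct model for the homotopy pullback, and the Proposition follows. That lemma is proved by a direct two-stage lift of $\partial\Delta^n\hookrightarrow\Delta^n$ against $\iota$: one first straightens the $Z$-coordinate toward $P_0$ via a $\Delta^1$-homotopy using the fibration $q_Z$, then corrects the $Y$-path-coordinate by filling a $\Lambda^2_1$-horn using the fibration $q_Y$, arriving at a lift whose lower triangle commutes only up to homotopy relative to $\partial\Delta^n$. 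You instead prove that the projection $p_X\colon\mathcal{M}\to X$ is itself a Kan fibration (via the pushout-product argument you sketch) and then compare strict fibers of $p_X$ with fiberwise homotopy fibers of $b\colon Y\to Z$, invoking the standard fact that the homotopy fiber of a map of fibrations over a common base is computed fiberwise. The hypotheses consumed are identical and the anodyne bookkeeping is essentially the same in each; you package it as a fibration statement where the paper packages it as a weak-equivalence statement. Your version gains the structural fact that $\mathcal{M}\to X$ is a fibration and avoids the ``commutes up to homotopy rel boundary'' management in the paper's lemma, while the paper's version produces a clean, self-contained comparison of two homotopy-pullback models that can be reused elsewhere. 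Both arguments are sound.
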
 

Note that the diagram in question commutes only up to specified homotopy. 

\begin{remark}
Proposition \ref{prop:homotopy pullback} implies that $\A\times_\C^h\B$ is often the homotopy pullback of $\A$ and $\B$ over $\C$ in the Bergner model structure on simplicial categories \cite{Bergner:MCSCSC}; specifically, we require the assumptions of the proposition to hold for all objects, and we require that $\mr{Ho}(P_\B)$ and $\mr{Ho}(P_\C)$ be isofibrations. Therefore, we think of $\A\times^h_\C\B$ as a (particularly convenient) model for the pullback of $\infty$-categories, whose homotopy theory is captured by the Bergner model structure.
\end{remark}

\begin{construction}\label{construction:functor}
	Suppose given $\A$, $\B$, and $\C$ as above. Let $\D$ be a simplicial category equipped with simplicial functors $H \colon \D\to \A$ and $K \colon \D\to \B$ over $\TTop$, together with the natural isomorphism $\chi$ in the diagram \[\begin{tikzcd} \D\arrow{dd}[swap]{H} \arrow{rr}{K} &[-5pt]&[-5pt]\B\arrow{dd}{G}\\[-10pt]\\
	\A\arrow[Rightarrow]{uurr}{\chi} \arrow{rr}[swap]{F} && \C,\end{tikzcd} \] we obtain a functor $\D\to \A_2\times^h_{\C_2}\B_2$ as follows.
	\begin{enumerate}
		\item The object $D\in \D$ is sent to the triple $(H(D), K(D), \chi_D)$.
		\item The $n$-simplex $\sigma\in \Map_\D(D_1,D_2)$ is sent to the triple consisting of $H(\sigma)$, $K(\sigma)$, and the constant path at $\chi_{D_2}\circ H(\sigma)=K(\sigma)\circ \chi_{D_1}$.
	\end{enumerate}
\end{construction}

To prove Proposition \ref{prop:homotopy pullback}, it will be convenient to put ourselves in a more general setting. Suppose given the following commutative diagram of simplicial sets \[\begin{tikzcd} 
&X\ar{d}[swap]{g}\ar[bend left=35]{ddr}{p_X}\\
Z\ar[bend right=35]{drr}{p_Z}\ar{r}{h}&Y\ar{dr}{p_Y}\\
&&W.
\end{tikzcd}\] Write $P$ for the standard model of the homotopy pullback of $X$ and $Z$ over $Y$; explicitly, $P$ is the limit of the diagram \[\begin{tikzcd}
X\ar{dr}{g}&&Y^{\Delta^1}\ar{dr}{\mr{ev_1}}\ar{dl}[swap]{\mr{ev}_0}&&Z\ar{dl}[swap]{h}\\[-5pt]
&Y&&Y.
\end{tikzcd}\] Finally, write $P_0$ for the pullback in the diagram \[\begin{tikzcd}
P_0\ar{d}\ar{r}{\iota}&P\ar{d}{q}\\
W\ar{r}&W^{\Delta^1},
\end{tikzcd}\] where the bottom arrow is the inclusion of the constant maps and $q$ is the composition of the projection to $Y^{\Delta^1}$ with $(p_Y)^{\Delta^1}$. We think of $P_0$ as the subspace of the homotopy pullback lying over constant paths in $W$. In the example of interest, $X$, $Y$, and $Z$ are mapping spaces in the relevant simplicial categories, and $W$ is the corresponding mapping space in $\TTop$.

The topological analogue of the following result is asserted in \cite[\S 9]{Andrade:FMIEA} We include a proof for the sake of completeness.

\begin{lemma}\label{lem:constant path model}
	If $p_Y$ and $p_Z$ are fibrations, then $\iota$ is a weak equivalence.
\end{lemma}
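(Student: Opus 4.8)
The plan is to exhibit $\iota \colon P_0 \to P$ as the inclusion of a deformation retract, or more precisely to produce an explicit fiberwise retraction of $P$ onto $P_0$ over a contractible base of paths. The key point is that a point of $P$ consists of a triple $(x, \omega, z)$ with $x \in X$, $z \in Z$, and $\omega$ a path in $Y$ from $g(x)$ to $h(z)$; its image $p_Y(\omega) = q(x,\omega,z)$ is a path in $W$ from $p_X(x)$ to $p_Z(z)$, and $P_0$ is the locus where this path is constant (which forces $p_X(x) = p_Z(z)$). First I would set up the homotopy: choose the standard contraction of the path space $W^{\Delta^1}$ onto the constant paths that slides the endpoint $p_Z(z)$ back along $p_Y(\omega)$ to $p_X(x)$, i.e. the reparametrization homotopy $H_s(p_Y\omega)(t) = p_Y\omega(\min(t, 1-s))$, so that $H_0 = \id$ and $H_1$ lands among constant paths.

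Next I would lift this homotopy through the fibrations. Since $p_Z$ is a fibration, I can lift the homotopy $s \mapsto$ (the path $t \mapsto p_Y\omega(1 - s + st)$ traversed in reverse, starting at $p_Z(z)$) to a homotopy of points of $Z$ starting at $z$; call the result $z_s$, with $z_0 = z$. Simultaneously, since $p_Y$ is a fibration, I would lift the corresponding reparametrization of $\omega$ itself to a homotopy $\omega_s$ in $Y$ with $\omega_0 = \omega$, arranged so that the endpoints of $\omega_s$ match $g(x)$ and $h(z_s)$ — concretely, using the product/exponential structure $(Y^{\Delta^1})^{\Delta^1} \cong Y^{\Delta^1 \times \Delta^1}$ and the fibration $p_Y^{\Delta^1 \times \Delta^1}$ to lift the square in $W$ prescribing how the path and its endpoint move. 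The pair $(x, \omega_s, z_s)$ then defines a homotopy $P \times \Delta^1 \to P$ from $\id_P$ to a map landing in $P_0$, and one checks it restricts to the identity on $P_0$ throughout (the constant-path locus is preserved because reparametrizing a constant path does nothing, and one can choose the lifts to be constant there). This gives a strong deformation retraction, hence $\iota$ is a weak equivalence.

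The main obstacle I anticipate is organizing the simultaneous lifts so that the homotopies of $z$, of $\omega$, and of the ambient square in $W$ are all \emph{compatible} — that is, the moving endpoint $h(z_s)$ really is the moving endpoint of $\omega_s$ — rather than just each existing separately. The clean way to do this is to phrase the entire deformation as a single lifting problem against one fibration: form the map $P \times \Delta^1 \to W^{K}$ for the appropriate finite simplicial set $K$ encoding ``a path in $W$ together with a chosen reparametrization of it,'' observe it agrees on $P \times \{0\}$ with data already lifted to $Y$ and $Z$, and invoke that $p_Y$ and $p_Z$ (hence suitable exponentials of them, since these are still fibrations) have the right lifting property. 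With the lift packaged this way the compatibility is automatic and the verification that $P_0$ is fixed is a matter of unwinding definitions. I would also remark that no hypothesis on $p_X$ is needed, matching the asymmetry in the statement.
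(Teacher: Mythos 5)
Your overall strategy---use the fibration hypotheses on $p_Y$ and $p_Z$ to push a simplex of $P$ down onto $P_0$---is the right instinct and is also what the paper does, but there is a genuine gap in the way you set it up, and it is exactly at the point you yourself flag as ``the main obstacle.''

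The specific problem is that the reparametrization homotopy you propose,
\[
H_s(p_Y\omega)(t) \;=\; p_Y\omega\bigl(\min(t,\,1-s)\bigr),
\]
does not exist as a map of simplicial sets. For it to exist one would need a simplicial map $\alpha \colon \Delta^1 \times \Delta^1 \to \Delta^1$ with $\alpha(t,0)=t$, $\alpha(t,1)=0$, and $\alpha(0,s)=0$; on vertices this forces $(1,0)\mapsto 1$ and $(1,1)\mapsto 0$, which is not order-preserving on the non-degenerate $2$-simplex $(0,0)<(1,0)<(1,1)$ of $\Delta^1\times\Delta^1$. More to the point, there is an intrinsic asymmetry in the simplicial world: the only monotone vertex patterns give $\min(t,s)$ (a homotopy \emph{from} the constant-at-source map \emph{to} the identity) or $\max(t,s)$ (a homotopy from the identity to the constant-at-\emph{target} map). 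A global simplicial retraction of $W^{\Delta^1}$ onto constants that fixes the \emph{source} endpoint---which is what you need, so that $z$ moves via $p_Z$ while $x$ stays put---is simply not available. Reversing the $\min$-homotopy, or producing some other retraction to the source, would require $W$ to be Kan, which the lemma does not assume; and even then the retraction would not be natural, so your ``single lifting problem against $W^K$'' reformulation would still need to specify $K$ and verify the relevant exponentials of $p_Y$ and $p_Z$ are fibrations against the right anodyne maps. That last step is where all the content lives, and your plan leaves it unaddressed.

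The paper avoids these issues entirely by \emph{not} attempting a global deformation retraction. Instead it uses the relative lifting characterization of weak equivalences: given a square with $\partial\Delta^n \to P_0$ and $\Delta^n \to P$, it produces $\Delta^n \to P_0$ agreeing on the boundary and homotopic rel boundary to the given map. This is done in two steps. First, $p_Z$ being a fibration is used to solve a lifting problem against $\Delta^n \times \Delta^1$ (producing the ``moving $z$'' you want, but only over this one simplex). Second, and crucially, $p_Y$ being a fibration is used to fill a parametrized $\Lambda^2_1 \subset \Delta^2$ horn in $Y$ over $W$; this horn-filling is the simplicial surrogate for the reparametrization you were reaching for, and working simplex-by-simplex with a free $\Delta^2$-direction is precisely what makes a valid simplicial move out of something that has no global formula. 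The third face of the filled $2$-simplex then supplies the new path $\omega'$ compatible with both the original $x$ and the displaced $z$, and the remaining face supplies the required homotopy rel $\partial\Delta^n$. So the paper's argument delivers a weak equivalence directly, without assuming $W$ Kan and without ever constructing a deformation retraction of $P$ onto $P_0$.

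Two smaller remarks. Your observation that no hypothesis on $p_X$ is needed is correct and does match the paper; in the paper's proof $x$ is genuinely held fixed throughout. And your claim that the deformation can be chosen to restrict to the identity on $P_0$ would, if you went the global route, require a relative horn-filling argument of its own (which is fine since $P_0 \hookrightarrow P$ is a monomorphism, hence a cofibration of simplicial sets), but this too needs to be said explicitly rather than asserted.
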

\begin{proof}
	Given the solid commuting diagram \[\begin{tikzcd}
	\partial\Delta^n\ar{d}\ar{r}&P_0\ar{d}{\iota}\\
	\Delta^n\ar{r}\ar[dashed]{ur}&P,
	\end{tikzcd}\] we will produce the dashed arrow making the top triangle commute and the bottom triangle commute up to homotopy fixing $\partial \Delta^n$. First, using the assumption that $p_Z$ is a fibration, we solve the lifting problem  \[\begin{tikzcd}
	\displaystyle\Delta^n\times\Delta^0\bigsqcup_{\partial \Delta^n\times\Delta^0}\partial\Delta^n\times \Delta^1\ar{d}\ar{r}&P\ar{r}&Z\ar{d}{p_Z}\\[-5pt]
	\Delta^n\times\Delta^1\ar{rr}\ar[dashed]{urr}&&W,
	\end{tikzcd}\] where the bottom map is the adjunct of the composite $\Delta^n\to P\to Y^{\Delta^1}\to W^{\Delta^1}$, and the lefthand map is induced by the inclusion of the vertex $0$. Composing with $h$ and passing back through the adjunction, we obtain the top map in the commuting diagram  
	\[\begin{tikzcd}
	\Delta^n\ar{d}\ar{rr}&&Y^{\Delta^1}\ar{d}{\mr{ev}_0}\\
	P\ar{r}&Y^{\Delta^1}\ar{r}{\mr{ev}_1}&Y.
	\end{tikzcd}\] There is an induced map $\Delta^n\times\Lambda^2_1\to Y$, and we use the assumption that $p_Y$ is a fibration to solve the lifting problem
	\[\begin{tikzcd}
	\displaystyle\Delta^n\times\Lambda^2_1\bigsqcup_{\partial \Delta^n\times\Lambda^2_1}\partial\Delta^n\times \Delta^2\ar{d}\ar{rr}&&Y\ar{d}{p_Y}\\[-5pt]
	\Delta^n\times\Delta^2\ar{r}\ar[dashed]{urr}&\Delta^n\times\Delta^1\ar{r}&W.
	\end{tikzcd}\] Restricting to the third face of $\Delta^2$, we obtain by adjunction the middle map in the commuting diagram 
	\[\begin{tikzcd}
	&&\Delta^n\ar{ddll}\ar{dd}\ar{ddrr}\\ \\
	X\ar{dr}{g}&&Y^{\Delta^1}\ar{dr}{\mr{ev_1}}\ar{dl}[swap]{\mr{ev}_0}&&Z\ar{dl}[swap]{h}\\[-5pt]
	&Y&&Y,
	\end{tikzcd}\] where the lefthand map is the composite $\Delta^n\to P\to X$, and the righthand map is the restriction of our earlier lift $\Delta^n\times\Delta^1\to Z$ to the vertex $1$. The resulting map $\Delta^n\to P$ factors through $P_0$ and restricts to the original map on $\partial \Delta^n$ by construction. Also by construction, the righthand square of the above diagram comes equipped with a homotopy relative to $\partial\Delta^n$, which furnishes the desired homotopy.
\end{proof}

\begin{proof}[Proof of Proposition \ref{prop:homotopy pullback}]
	The first assumption guarantees that the standard model for the homotopy pullback has the correct weak equivalence type. The second assumption permits the invocation of Lemma \ref{lem:constant path model}, which guarantees that the canonical map from $\Map_{\A\times^h_{\C}\B}\left((A_1,B_1,f_1),(A_2,B_2,f_2)\right)$ to the standard model for the homotopy pullback is a weak equivalence.
\end{proof}
\end{appendix}

\bibliographystyle{amsalpha}
\bibliography{references}

\providecommand{\bysame}{\leavevmode\hbox to3em{\hrulefill}\thinspace}
\providecommand{\MR}{\relax\ifhmode\unskip\space\fi MR }
\providecommand{\MRhref}[2]{%
  \href{http://www.ams.org/mathscinet-getitem?mr=#1}{#2}
}
\providecommand{\href}[2]{#2}
\begin{thebibliography}{BdBW18}

\bibitem[And12]{Andrade:FMIEA}
R.~Andrade, \emph{From manifolds to invariants of {$E_n$}-algebras}, 2012,
  arXiv:1210.7909.

\bibitem[Ant71]{Antonelli:SDESMR}
P.~L. Antonelli, \emph{On stable diffeomorphism of exotic spheres in the
  metastable range}, Canadian J. Math. \textbf{23} (1971), 579--587.
  \MR{283810}

\bibitem[AS22]{AroneSzymik:SKCESS}
G.~Arone and M.~Szymik, \emph{Spaces of knotted circles and exotic smooth
  structures}, Canad. J. Math. \textbf{74} (2022), no.~1, 1--23. \MR{4379395}

\bibitem[BdBW13]{BoavidaWeiss:MCHS}
P.~Boavida~de Brito and M.~Weiss, \emph{Manifold calculus and homotopy
  sheaves}, Homology Homotopy Appl. \textbf{15} (2013), no.~2, 361--383.
  \MR{3138384}

\bibitem[BdBW18]{BoavidaWeiss:SSMCC}
\bysame, \emph{Spaces of smooth embeddings and configuration categories}, J.
  Topol. \textbf{11} (2018), no.~1, 65--143. \MR{3784227}

\bibitem[Ber07]{Bergner:MCSCSC}
J.~Bergner, \emph{A model category structure on the category of simplicial
  categories}, Transactions of the American Mathematical Society \textbf{359}
  (2007), 2043--2058.

\bibitem[BL74]{BurgheleaLashof:HSD}
D.~Burghelea and R.~Lashof, \emph{The homotopy type of the space of
  diffeomorphisms. {I}, {II}}, Trans. Amer. Math. Soc. \textbf{196} (1974),
  1--36; ibid. 196 (1974), 37--50. \MR{356103}

\bibitem[Bro60]{Brown:PGST}
M.~Brown, \emph{A proof of the generalized {S}choenflies theorem}, Bull. Amer.
  Math. Soc. \textbf{66} (1960), 74--76. \MR{117695}

\bibitem[DI04]{DuggerIsaksen:THAR}
D.~Dugger and D.~Isaksen, \emph{Topological hypercovers and 1-realizations},
  Mathematische Zeitschrift \textbf{246} (2004), 667--689.

\bibitem[EK71]{EdwardsKirby:DSI}
R.~D. Edwards and R.~C. Kirby, \emph{Deformations of spaces of imbeddings},
  Ann. of Math. (2) \textbf{93} (1971), 63--88. \MR{283802}

\bibitem[FQ90]{FreedmanQuinn:TO4M}
M.~H. Freedman and F.~Quinn, \emph{Topology of 4-manifolds}, Princeton
  Mathematical Series, vol.~39, Princeton University Press, Princeton, NJ,
  1990. \MR{1201584}

\bibitem[GK15]{GoodwillieKlein:MDSE}
T.~G. Goodwillie and J.~R. Klein, \emph{Multiple disjunction for spaces of
  smooth embeddings}, J. Topol. \textbf{8} (2015), no.~3, 651--674.
  \MR{3394312}

\bibitem[GKW03]{GoodwillieKleinWeiss:HSTDEC}
T.~G. Goodwillie, J.~R. Klein, and M.~S. Weiss, \emph{A {H}aefliger style
  description of the embedding calculus tower}, Topology \textbf{42} (2003),
  no.~3, 509--524. \MR{1953238}

\bibitem[GRW20]{GalatiusRandalWilliams:MSMUG}
S.~Galatius and O.~Randal-Williams, \emph{{Moduli spaces of manifolds: a user's
  guide}}, in Handbook of Homotopy Theory, H.~Miller (Ed.), New York: Chapman
  and Hall/CRC, 2020, pp.~443--485.

\bibitem[GW99]{GoodwillieWeiss:EPVITII}
T.~G. Goodwillie and M.~Weiss, \emph{Embeddings from the point of view of
  immersion theory. {II}}, Geom. Topol. \textbf{3} (1999), 103--118.
  \MR{1694808}

\bibitem[HLS65]{HsiangLevineSzczarba:NBHSEES}
W.-c. Hsiang, J.~Levine, and R.~H. Szczarba, \emph{On the normal bundle of a
  homotopy sphere embedded in {E}uclidean space}, Topology \textbf{3} (1965),
  173--181. \MR{175138}

\bibitem[Hor17]{Horel:PCOGTG}
G.~Horel, \emph{Profinite completion of operads and the
  {G}rothendieck-{T}eichm\"{u}ller group}, Adv. Math. \textbf{321} (2017),
  326--390. \MR{3715714}

\bibitem[Hus94]{Husemoller:FB}
D.~Husemoller, \emph{Fibre bundles}, third ed., Graduate Texts in Mathematics,
  vol.~20, Springer-Verlag, New York, 1994. \MR{1249482}

\bibitem[Ker60]{Kervaire:SNHGOLG}
M.~A. Kervaire, \emph{Some nonstable homotopy groups of {L}ie groups}, Illinois
  J. Math. \textbf{4} (1960), 161--169. \MR{0113237}

\bibitem[Ker65]{Kervaire:HDK}
\bysame, \emph{On higher dimensional knots}, Differential and {C}ombinatorial
  {T}opology ({A} {S}ymposium in {H}onor of {M}arston {M}orse), Princeton Univ.
  Press, Princeton, N.J., 1965, pp.~105--119. \MR{0178475}

\bibitem[Kis64]{Kister:MBFB}
J.~M. Kister, \emph{Microbundles are fibre bundles}, Ann. of Math. (2)
  \textbf{80} (1964), 190--199. \MR{180986}

\bibitem[KK21]{KrannichKupers:ECFS}
M.~Krannich and A.~Kupers, \emph{Embedding calculus for surfaces},
  arXiv:2101.07885, 2021.

\bibitem[KK22]{KrannichKupers:DSS}
\bysame, \emph{The {D}isc-structure space}, 2022, arXiv:2205.01755.

\bibitem[Kon94]{Kontsevich:FDLT}
M.~Kontsevich, \emph{Feynman diagrams and low-dimensional topology}, First
  {E}uropean {C}ongress of {M}athematics, {V}ol. {II} ({P}aris, 1992), Progr.
  Math., vol. 120, Birkh\"{a}user, Basel, 1994, pp.~97--121. \MR{1341841}

\bibitem[K{\"o}r17]{Korschgen:DKEIETEP}
A.~K{\"o}rschgen, \emph{{D}wyer-{K}an equivalences induce equivalences on
  topologically enriched presheaves}, arXiv:1704.07472, 2017.

\bibitem[KRW20]{KupersRandalWilliams:CTSA}
A.~Kupers and O.~Randal-Williams, \emph{The cohomology of {T}orelli groups is
  algebraic}, Forum Math. Sigma \textbf{8} (2020), Paper No. e64, 52.
  \MR{4190064}

\bibitem[KS77]{KirbySiebenmann:FETMST}
R.~C. Kirby and L.~C. Siebenmann, \emph{Foundational essays on topological
  manifolds, smoothings, and triangulations}, Princeton University Press,
  Princeton, N.J.; University of Tokyo Press, Tokyo, 1977, With notes by John
  Milnor and Michael Atiyah, Annals of Mathematics Studies, No. 88.
  \MR{0645390}

\bibitem[Kup19]{Kupers:SFRGAM}
A.~Kupers, \emph{Some finiteness results for groups of automorphisms of
  manifolds}, Geom. Topol. \textbf{23} (2019), 2277--2333 (electronic).

\bibitem[Lev65]{Levine:ACOFDK}
J.~Levine, \emph{A classification of differentiable knots}, Ann. of Math. (2)
  \textbf{82} (1965), 15--50. \MR{180981}

\bibitem[Lur17]{Lurie:HA}
J.~Lurie, \emph{Higher algebra}, 2017,
  \url{http://people.math.harvard.edu/~lurie/papers/HA.pdf}.

\bibitem[Mah65]{Mahowald:SWPS}
M.~Mahowald, \emph{Some {W}hitehead products in {$S\sp{n}$}}, Topology
  \textbf{4} (1965), 17--26. \MR{178467}

\bibitem[Mah67]{Mahowald:MHSN}
\bysame, \emph{The metastable homotopy of {$S\sp{n}$}}, Memoirs of the American
  Mathematical Society, No. 72, American Mathematical Society, Providence,
  R.I., 1967. \MR{0236923}

\bibitem[Mah77]{Mahowald:NIF}
\bysame, \emph{A new infinite family in {${}\sb{2}\pi_{*}{}^s$}}, Topology
  \textbf{16} (1977), no.~3, 249--256. \MR{445498}

\bibitem[Mas59]{Massey:NBSIES}
W.~S. Massey, \emph{On the normal bundle of a sphere imbedded in {E}uclidean
  space}, Proc. Amer. Math. Soc. \textbf{10} (1959), 959--964. \MR{109351}

\bibitem[Mil64]{Milnor:MBI}
J.~Milnor, \emph{Microbundles. {I}}, Topology \textbf{3} (1964), no.~suppl,
  suppl. 1, 53--80. \MR{161346}

\bibitem[MT95]{MahowaldThompson:EHPPH}
M.~Mahowald and R.~D. Thompson, \emph{The {$EHP$} sequence and periodic
  homotopy}, Handbook of algebraic topology, North-Holland, Amsterdam, 1995,
  pp.~397--423. \MR{1361895}

\bibitem[Pri20]{Prigge:TCFBSEC}
N.~Prigge, \emph{On tautological classes of fibre bundles and self-embedding
  calculus}, 2020.

\bibitem[Ran96]{Ranicki:OTH}
A.~A. Ranicki, \emph{On the {H}auptvermutung}, The {H}auptvermutung book,
  $K$-Monogr. Math., vol.~1, Kluwer Acad. Publ., Dordrecht, 1996, pp.~3--31.
  \MR{1434101}

\bibitem[Sie72]{Siebenmann:DOHOSS}
L.~C. Siebenmann, \emph{Deformation of homeomorphisms on stratified sets. {I},
  {II}}, Comment. Math. Helv. \textbf{47} (1972), 123--136; ibid. 47 (1972),
  137--163. \MR{319207}

\bibitem[Tur13]{Turchin:CFMCFMO}
V.~Turchin, \emph{Context-free manifold calculus and the
  {F}ulton-{M}ac{P}herson operad}, Algebr. Geom. Topol. \textbf{13} (2013),
  no.~3, 1243--1271.

\bibitem[Vir15]{Viro:SS1}
O.~Viro, \emph{Space of smooth 1-knots in a 4-manifold: is its algebraic
  topology sensitive to smooth structures?}, Arnold Math. J. \textbf{1} (2015),
  no.~1, 83--89. \MR{3331970}

\bibitem[Vol06]{Volic:FTKICF}
I.~Voli\'{c}, \emph{Finite type knot invariants and the calculus of functors},
  Compos. Math. \textbf{142} (2006), no.~1, 222--250. \MR{2197410}

\bibitem[Wal16]{Wall:DT}
C.~T.~C. Wall, \emph{Differential topology}, Cambridge Studies in Advanced
  Mathematics, vol. 156, Cambridge University Press, Cambridge, 2016.
  \MR{3558600}

\bibitem[Wat18]{Watanabe:ENERHGD}
T.~Watanabe, \emph{Some exotic nontrivial elements of the rational homotopy
  groups of $\mathrm{Diff}({S}^4)$}, 2018, arXiv:1812.02448.

\bibitem[Wei99]{Weiss:EPVITI}
M.~Weiss, \emph{Embeddings from the point of view of immersion theory: Part
  {I}}, Geom. Topol. \textbf{3} (1999), no.~1, 67--101.

\bibitem[Wei11]{Weiss:EPVITIerratum}
\bysame, \emph{Erratum to the article {E}mbeddings from the point of view of
  immersion theory: {P}art {I}}, Geom. Topol. \textbf{15} (2011), no.~1,
  407--409. \MR{2776849}

\bibitem[Wei21]{Weiss:Dalian}
M.~S. Weiss, \emph{Rational {P}ontryagin classes of {E}uclidean fiber bundles},
  Geom. Topol. \textbf{25} (2021), no.~7, 3351--3424. \MR{4372633}

\end{thebibliography}

\vspace{1cm}

\end{document}